\newcolumntype{C}[1]{>{\centering\let\newline\\\arraybackslash\hspace{0pt}}m{#1}}
\newtheorem{theorem}{Theorem}[section]
\newtheorem{lemma}[theorem]{Lemma}
\newtheorem{corollary}[theorem]{Corollary}
\newtheorem{conjecture}{Conjecture}[section]
\newtheorem{problem}{Problem}[section]
\newcommand{\tref}[1]{Theorem \ref{theorem:#1}}
\newcommand{\lref}[1]{Lemma \ref{lemma:#1}}
\newcommand{\fref}[1]{Figure \ref{fig:#1}}
\newcommand{\cref}[1]{Conjecture \ref{conjecture:#1}}
\newcommand{\coref}[1]{Corollary \ref{corollary:#1}}
\def\addlegendimage{\csname pgfplots@addlegendimage\endcsname}
\pgfplotsset{every legend to name picture/.style={west}}
\newcommand{\drawtab}[3]{\begin{tikzpicture}[scale =.4] 
	\draw[help lines] (0,0) grid (#1 + #2 + #3 + 1, 1);
	\ifnum #1>0 
	\foreach \dir in {1,...,#1}{\node at (\dir-.5,.5) {\footnotesize$1$};};\fi 
	\ifnum #2>0 
	\foreach \dir in {1,...,#2}{\node at (#1+\dir-.5,.5) {\footnotesize$2$};};\fi 
	\ifnum #3>0 
	\foreach \dir in {1,...,#3}{\node at (#1+#2+\dir-.5,.5) {\footnotesize$\overline{2}$};};\fi 
	\node at (#1+#2+#3+.5,.5) {\footnotesize$\widehat{2}$};
	\end{tikzpicture}}
\newcommand{\fillshade}[1]{\foreach \x/\y in {#1}{\path[fill,blue!20!white] (\x-1,\y-1) rectangle (\x,\y);}}
\newcommand{\fillll}[3]{\node at (#1-.5,#2-.5) {\footnotesize$#3$};}
\newcommand{\fillsome}[1]{\foreach \x/\y/\z in {#1}{\node at (\x-.5,\y-.5) {\footnotesize$\z$};}}
\newcommand{\PFmnu}[4]{
	\coordinate (axis) at (#1);
	\coordinate (axis2) at (0,0);
	\draw[help lines] (#1) grid +(#2,#3);
	\draw[dashed] (#1) -- +(#2,#3);
	\coordinate (prev) at (#1);
	
	\foreach \x/\y in {#4}{	
		\ifnum\y=0
		\draw[line width=2pt,blue] (axis)+(axis2) -- +(#2,0);
		\else
		\draw[line width=2pt,blue] (axis)+(\x,0) -- +(\x,1);
		\draw[line width=2pt,blue] (axis)+(axis2) -- +(\x,0);
		\path (axis)+(\x+0.5,0.5) node {$\y$};		
		\fi
		\path (axis) -- +(0,1) coordinate (axis);
		\path (axis2) -- (\x,0) coordinate (axis2);
	}
}
\newcommand{\PFmnum}[4]{
	\coordinate (axis) at (#1);
	\coordinate (axis2) at (0,0);
	\draw[help lines] (#1) grid +(#2,#3);
	\draw[dashed] (#1) -- +(#2,#3);
	\coordinate (prev) at (#1);
	\foreach \x/\y in {#4}{	
		\ifnum\y=0
		\draw[line width=1pt,blue] (axis)+(axis2) -- +(#2,0);
		\else
		\draw[line width=1pt,blue] (axis)+(\x,0) -- +(\x,1);
		\draw[line width=1pt,blue] (axis)+(axis2) -- +(\x,0);
		\path (axis)+(\x+0.5,0.5) node {\tiny$\y$};		
		\fi
		\path (axis) -- +(0,1) coordinate (axis);
		\path (axis2) -- (\x,0) coordinate (axis2);
	}
}
\newcommand{\Dpath}[4]{
	\coordinate (axis) at (#1);
	\coordinate (axis2) at (#1);
	\draw[help lines] (#1) grid +(#2,#3);
	\draw[dashed] (#1) -- +(#2,#3);
	\foreach \x in {#4}{	
		\ifnum\x=-1
		\draw[line width=2pt,blue] (axis)+(axis2) -- +(#2,0);
		\else
		\draw[line width=2pt,blue] (axis)+(\x,0) -- +(\x,1);
		\draw[line width=2pt,blue] (axis)+(axis2) -- +(\x,0);
		\path (axis) -- +(0,1) coordinate (axis);
		\path (axis2) -- (\x,0) coordinate (axis2);	
		\fi
	}
}
\newcommand{\PFtext}[2]{
	\coordinate (axis) at (#1);
	\coordinate (axis2) at (#1);
	\foreach \x/\y in {#2}{	
		\path (axis)+(\x+0.5,0.5) node {$\y$};		
		\path (axis) -- +(0,1) coordinate (axis);
		\path (axis2) -- (\x,0) coordinate (axis2);
}}
\newcommand{\PFad}[2]{
	\coordinate (axis) at (#1);
	\foreach \x/\y/\z in {#2}{	
		\path (axis)+(0.5,0.5) node {$\x$};
		\path (axis)+(3,0.5) node {$\y$};
		\path (axis)+(5.5,0.5) node {$\z$};	
		\path (axis) -- +(0,1) coordinate (axis);
	}
	\path (axis)+(0.5,0.5) node {$i$};
	\path (axis)+(3,0.5) node {$a_i(\PF)$};
	\path (axis)+(6,0.5) node {$d_i(\PF)$};	
}
\def\Z{\mathbb{Z}}
\newcommand{\la}{\lambda}
\newcommand{\sg}{\sigma}
\newcommand{\qbinom}[2]{\left[#1 \atop #2  \right]_q}
\newcommand{\thn}[1]{$#1\textnormal{th}$}
\newcommand{\PF}{\mathrm{PF}}
\newcommand{\dinv}{\mathrm{dinv}}
\newcommand{\area}{\mathrm{area}}
\newcommand{\qn}[1]{[#1]_{q}}
\newcommand{\Sn}[1]{\mathcal{S}_{#1}}
\newcommand{\Snn}{\mathcal{S}_{n}}
\newcommand{\La}{\Lambda}
\newcommand{\Inj}{\mathrm{Tab}}
\newcommand{\al}{\alpha}
\newcommand{\svDash}{\vDash_{\mathrm{strong}}}
\newcommand{\C}{\mathbb{C}}
\newcommand{\MH}{\widetilde{H}}
\newcommand{\inv}{\mathrm{inv}}
\newcommand{\maj}{\mathrm{maj}}
\newcommand{\Dyck}{\mathcal{D}}
\newcommand{\ga}{\gamma}
\newcommand{\valley}{\mathrm{valley}}
\newcommand{\Rise}{\mathrm{Rise}}
\newcommand{\Val}{\mathrm{Val}}
\newcommand{\WPFc}{\mathcal{WPF}}
\newcommand{\OP}{\mathcal{OP}}
\newcommand{\minimaj}{\mathrm{minimaj}}
\newcommand{\ind}{\mathrm{ind}}
\newcommand{\des}{\mathrm{des}}
\newcommand{\miniword}{\mathrm{miniword}}
\newcommand{\stat}{\mathrm{stat}}
\newcommand{\be}{\beta}
\newcommand{\OPa}{\mathcal{OP}^{\mathrm{all}}}
\newcommand{\Ia}{I^{\mathrm{all}}}
\newcommand{\Ma}{M^{\mathrm{all}}}
\def\multiset#1#2{\ensuremath{\left(\kern-.3em\left(\genfrac{}{}{0pt}{}{#1}{#2}\right)\kern-.3em\right)}}
\title{The valley version of the Extended Delta Conjecture}
\author{
Dun Qiu\\[-0.8ex]
\small Department of Mathematics\\[-0.8ex]
\small University of California, San Diego\\[-0.8ex]
\small La Jolla, CA 92093, USA\\[-0.8ex]
\small \texttt{duqiu@ucsd.edu}
\and 
Andrew Timothy Wilson\textsuperscript{1}\\[-0.8ex]
\small Department of Mathematics \& Statistics\\[-0.8ex]
\small Portland State University\\[-0.8ex]
\small Portland, OR 97201, USA\\[-0.8ex]
\small \texttt{andwils2@pdx.edu}
}
\date{}
\begin{document}
\maketitle
\begin{abstract}
\noindent 
The {Shuffle Theorem} of Carlsson and Mellit gives a combinatorial expression for the bigraded Frobenius characteristic of the ring of diagonal harmonics, and the {Delta Conjecture} of Haglund, Remmel and the second author provides two generalizations of the Shuffle Theorem to the delta operator expression $\Delta'_{e_k} e_n$. 
Haglund et al.\ also propose the Extended Delta Conjecture for the delta operator expression $\Delta'_{e_k} \Delta_{h_r}e_n$, which is analogous to the \emph{rise version} of the Delta Conjecture. Recently, D'Adderio, Iraci and Wyngaerd proved the  rise version of the Extended Delta Conjecture at the case when $t=0$.
In this paper, we propose a new \emph{valley version} of the Extended Delta Conjecture. Then, we work on the combinatorics of extended ordered multiset partitions to prove that the two conjectures for $\Delta'_{e_k} \Delta_{h_r}e_n$ are equivalent when $t$ or $q$ equals 0, thus proving the valley version of the Extended Delta Conjecture when $t$ or $q$ equals 0.
\ \\

\noindent\textbf{Keywords:} Macdonald polynomials, symmetric functions, parking functions, ordered set partitions
\end{abstract}

\footnotetext[1]{The second author was partially supported by an NSF Mathematical Sciences Postdoctoral Research Fellowship.}

\section{Introduction}
Let $X=\{x_1,x_2,\ldots,x_n\}$ and $Y=\{y_1,y_2,\ldots,y_n\}$ be two sets of $n$ commuting variables. The {\em ring of diagonal harmonics} consists of 
those polynomials in $\mathbb{Q}[X,Y]$ which satisfy 
the following system of differential equations
$$
\partial_{x_1}^a\partial_{y_1}^b\,f(X,Y)+\partial_{x_2}^a\partial_{y_2}^b\,f(X,Y)+\ldots+\partial_{x_n}^a\partial_{y_n}^b\,f(X,Y)=0
$$
for each pair of integers $a$ and $b$ such that {$a+b>0$}. Haiman \cite{Haiman} proved that the {\em bigraded Frobenius characteristic} of the $\Sn{n}$-module of diagonal harmonics, $DH_n(X;q,t)$, is given by
\begin{equation}
DH_n(X;q,t)=\nabla e_n.
\end{equation}
where $\nabla$, $e_n$,  and other symmetric function notation will be defined in Section 2.
The {\em Classical Shuffle Conjecture} proposed by Haglund, Haiman, Loehr, Remmel and Ulyanov \cite{HHLRU} gives a well-studied combinatorial expression for the bigraded Frobenius characteristic of the ring of diagonal harmonics. The Shuffle Conjecture has been proved by Carlsson and Mellit \cite{CM} as  the \emph{Shuffle Theorem} as follows; again, relevant notation will be given in Section 2.

\begin{theorem}[Carlsson and Mellit]\label{theorem:shuffle} For any integer $n\geq 0$, 
	\begin{equation}
	\nabla e_n=\sum_{\PF\in\WPFc_n}t^{\area(\PF)}q^{\dinv(\PF)} X^{\PF},
	\end{equation}
\end{theorem}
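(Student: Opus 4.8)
The plan is to deduce \tref{shuffle} from the \emph{compositional} refinement of the shuffle conjecture due to Haglund, Morse and Zabrocki. For a composition $\alpha=(\alpha_1,\dots,\alpha_\ell)\vDash n$ they define an operator $\mathbf{C}_\alpha := \mathbf{C}_{\alpha_1}\cdots\mathbf{C}_{\alpha_\ell}$ on symmetric functions through a plethystic formula, and their statement asserts that $\nabla\,\mathbf{C}_\alpha(1)$ equals $\sum t^{\area(\PF)}q^{\dinv(\PF)}X^{\PF}$ summed over those word parking functions whose supporting Dyck path returns to the main diagonal exactly at the partial sums of $\alpha$. Because $\sum_{\alpha\vDash n}\mathbf{C}_\alpha(1)=e_n$ and the diagonal-return pattern partitions $\WPFc_n$, summing the compositional identity over all $\alpha\vDash n$ yields \tref{shuffle} verbatim; so it suffices to establish the compositional identity.

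For that I would build the algebraic machinery of Carlsson and Mellit. Introduce the \emph{Dyck path algebra} $\mathbb{A}_q$: a path-algebra-type object with a vertex for each $k\in\N$, raising and lowering arrows $d_+$ and $d_-$ between consecutive vertices, and Hecke-type operators $T_1,\dots,T_{k-1}$ at vertex $k$, subject to the quadratic Hecke relations, the braid relations, and a short list of relations intertwining the $T_i$ with $d_\pm$. This algebra acts on $V_\bullet=\bigoplus_{k\ge 0}\Lambda\otimes\mathbb{Q}(q)[y_1,\dots,y_k]$, and in this representation $\nabla$ (acting on $V_0=\Lambda$) conjugates the word $\mathbf{C}_\alpha$ into an explicit word in $d_+$, $d_-$ and the $T_i$. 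On the combinatorial side, for each $k$ let $F_k$ collect $t^{\area}q^{\dinv}X^{\PF}$ over ``partial'' labeled Dyck paths carrying $k$ additional rows above the diagonal together with a decoration recording a partial label assignment; the aim is to prove that $F_\bullet=(F_k)_k$ is a submodule of $V_\bullet$, with $d_+$ adding a North step, $d_-$ adding an East step off the diagonal, and $T_i$ transposing two adjacent decorated rows, each acting with the power of $q$ dictated by its effect on $\dinv$.

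The main obstacle, as in the original proof, is verifying that these combinatorial operators really do satisfy all of the defining relations of $\mathbb{A}_q$ --- especially the mixed relations relating $T_i$ to $d_+$ and $d_-$ and the commutator-type relation between $d_+$ and $d_-$ --- since each such check is a statistic-sensitive bijection on (multisets of) parking functions: it must preserve $\area$ and shift $\dinv$ by a prescribed amount, and $\dinv$ is a global statistic, so inserting a single step can change many pairwise contributions at once and one must account for all of them, sometimes cancelling surplus terms in pairs. Once the module structure is established, the compositional identity follows by a straightening argument: both $\nabla\,\mathbf{C}_\alpha(1)$ and the matching combinatorial generating function are the image of $1\in V_0$ under the same word in $\mathbb{A}_q$, hence they coincide. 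No passage between the $\dinv$ and $\bounce$ statistics is required, since \tref{shuffle} is already phrased in the $\area$--$\dinv$ form that this algebra produces directly.
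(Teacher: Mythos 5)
The first thing to note is that the paper does not prove \tref{shuffle} at all: it is imported as a black box, attributed to Carlsson and Mellit with a citation, and everything the paper actually proves lies elsewhere (in the combinatorics of extended ordered multiset partitions). So there is no internal proof to compare yours against; the only meaningful comparison is with the published proof of Carlsson and Mellit, and your outline is a faithful description of its architecture: reduce to the Haglund--Morse--Zabrocki compositional refinement via $e_n=\sum_{\alpha\vDash n}\mathbf{C}_\alpha(1)$ and the diagonal-touch-point decomposition of $\WPFc_n$, then realize both $\nabla\,\mathbf{C}_\alpha(1)$ and the combinatorial generating function as the image of $1$ under one and the same word in the Dyck path algebra acting on $\bigoplus_k \Lambda\otimes\mathbb{Q}(q,t)[y_1,\dots,y_k]$. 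You also correctly locate where the difficulty sits, namely in checking that the combinatorial operators on partial labeled Dyck paths satisfy the defining relations.

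That said, as submitted this is a strategy, not a proof. Every step that carries mathematical content is announced rather than executed: the relations of the algebra are ``a short list'' you do not write down; the action of $d_+$, $d_-$, $T_i$ on the spaces $F_k$ of partial parking functions is described only qualitatively; the claim that $F_\bullet$ is a submodule --- which is the theorem, in disguise --- is stated as ``the aim''; and the identification of $\nabla\,\mathbf{C}_\alpha$ with an explicit word in the generators (itself a nontrivial plethystic computation in the original) is asserted without derivation. One small imprecision: the relevant algebra is $\mathbb{A}_{q,t}$, with $t$ entering through the commutation relation between $d_+$ and $d_-$ (equivalently through the area-raising operator), not a purely $q$-deformed object. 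If the intent is to cite Carlsson--Mellit, a citation suffices, as the paper itself does; if the intent is to prove the theorem, essentially all of the work remains to be done.
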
 
\noindent which says that the Frobenius characteristic of diagonal harmonics can be written as a generating function of combinatorial objects called word parking functions. 
As a generalization of the Shuffle Theorem, the Delta Conjecture can be stated as
\begin{conjecture}[Haglund, Remmel and Wilson]
	For any integers $n> k\geq 0$,
	\begin{eqnarray}
	\Delta'_{e_k}e_n&=&\sum_{\PF\in\WPFc_n}t^{\area(\PF)}q^{\dinv(\PF)} X^{\PF}
	\prod_{i\in \Rise(\PF)}
	(1+\frac{z}{t^{a_i(\PF)}})\bigg|_{z^{n-k-1}}\label{delta1}\\
	&=&\sum_{\PF\in\WPFc_n}t^{\area(\PF)}q^{\dinv(\PF)} X^{\PF}
	\prod_{i\in \Val(\PF)}
	(1+\frac{z}{q^{d_i(\PF)+1}})\bigg|_{z^{n-k-1}}.\label{delta2}
	\end{eqnarray}
\end{conjecture}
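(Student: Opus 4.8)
The plan is to prove the Delta Conjecture by establishing the rise identity \eqref{delta1} and the valley identity \eqref{delta2} separately, each matched against the symmetric function $\Delta'_{e_k}e_n$, using \tref{shuffle} as the base case of the induction (indeed $\Delta'_{e_{n-1}}e_n=\nabla e_n$ and, for $k=n-1$, the $z$-product in both lines collapses to $1$). For the rise side I would extend the Carlsson--Mellit Dyck path algebra --- the operators $d_+,d_-,T_i$ acting on the polynomial representation $V_\bullet$ --- by one extra grading that records the number of decorated rises, equivalently a marked parameter playing the role of $z$; equivalently, one works with a compositional refinement $\sum_\alpha C_\alpha$-type expansion carrying that extra mark. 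The key steps are: (i) obtain a $(q,t)$-operator expression for $\Delta'_{e_k}e_n$ compatible with the extra grading --- most plausibly through the action of the positive part of the elliptic Hall (Schiffmann) algebra on $V_\bullet$, in the spirit of Blasiak--Haiman--Morse--Pun--Seelinger, so that $\Delta'_{e_k}e_n$ becomes a controlled sum of LLT-type pieces; (ii) prove that this expression satisfies a recursion under the step-addition (peel-off-a-row) surgery; and (iii) check that the right-hand side of \eqref{delta1} satisfies the identical recursion via the combinatorial dissection of word parking functions in $\WPFc_n$ used in the proof of \tref{shuffle}, tracking how $\area$, $\dinv$, the content monomial $X^{\PF}$, and the set $\Rise(\PF)$ of decorable rises transform.

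For the valley side \eqref{delta2} I would run a parallel argument after first \emph{superizing}: assign a barred (negative) letter to each decorated valley, so that the right-hand side of \eqref{delta2} becomes the super-Frobenius image of a family of parking functions weighted by $\dinv$ with a $q^{-(d_i(\PF)+1)}$ correction at each barred valley, and then show this super-generating function obeys a Carlsson--Mellit--style recursion matching a second operator identity for $\Delta'_{e_k}e_n$. A more economical route, once \eqref{delta1} is in hand, is to prove the purely combinatorial statement that the two right-hand sides agree: here I would search for a sign-reversing involution on pairs $(\PF,S)$ --- a word parking function together with its chosen set of decorated rises (resp.\ valleys) --- cancelling all configurations except a distinguished fixed-point family on which the $t$-weight $\area$ with rise corrections visibly coincides with the $\dinv$-weight with valley corrections. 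A zeta-map--flavoured bijection on Dyck paths, exchanging area-type with bounce-type statistics and rises with valleys, is the natural scaffold; the $q=0$ and $t=0$ specializations treated in this paper are precisely the degenerate cases where this scaffold becomes tractable, and should serve both as a consistency check and as a source of the correct fixed-point description.

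The main obstacle --- and essentially the whole difficulty --- is step (i): $\Delta'_{e_k}$ does not factor through $\nabla$ in any elementary way, so the clean recursion that powers \tref{shuffle} does not automatically carry the extra $e_k$-grading. Closing the recursion forces one to identify the correct family of $(q,t)$-deformed intermediate symmetric functions and to verify that the decorated-rise surgery on parking functions is exactly intertwined with the Hall-algebra action that raises $e_n$ to $\Delta'_{e_k}e_n$; this is the crux of both known proofs of \eqref{delta1} and is considerably more delicate than the corresponding step for the Shuffle Theorem. A secondary obstacle, should the equivalence of \eqref{delta1} and \eqref{delta2} be attacked bijectively, is that any natural map on Dyck paths must simultaneously reconcile the exchange of rises with valleys, the exchange of $t^{a_i(\PF)}$ with $q^{d_i(\PF)+1}$, and the preservation of $X^{\PF}$ --- three features that all transform non-locally --- which is exactly the entanglement the present paper sidesteps by specializing a variable to $0$.
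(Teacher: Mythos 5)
What you have written is a research programme, not a proof, and the gap is exactly where you locate it yourself: step (i). The statement you were asked to prove is stated in this paper as a \emph{conjecture} --- the paper never proves it, and only establishes (for the extended setting) the specializations at $q=0$ or $t=0$ by reducing both sides to generating functions over ordered multiset partitions and proving equidistribution of $\inv$, $\maj$, $\dinv$, $\minimaj$. Your outline does not close that gap: you do not produce the operator identity or recursion for $\Delta'_{e_k}e_n$ compatible with the extra decoration grading, you do not exhibit the claimed intertwining of the Hall-algebra action with the decorated-rise surgery, and you do not construct either the superized recursion or the sign-reversing involution/zeta-type bijection needed for the valley side. Since $\Delta'_{e_k}$ is not a scalar eigenoperator composition with $\nabla$, the Carlsson--Mellit recursion does not extend formally, and asserting that the ``correct family of $(q,t)$-deformed intermediate symmetric functions'' exists is precisely the unproved content; nothing in your sketch verifies it. The base-case observation ($k=n-1$ recovers the Shuffle Theorem, since $\Delta'_{e_{n-1}}e_n=\nabla e_n$ and the $z$-product contributes only its constant term) is correct but carries none of the weight.

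A secondary issue: your claim that the crux is shared by ``both known proofs of \eqref{delta1}'' presupposes results not available in, and not used by, this paper, which explicitly treats the Delta Conjecture as open; and the valley identity \eqref{delta2} in particular has no known proof by any route, so the ``more economical'' plan of deducing it from \eqref{delta1} via an involution is pure speculation --- you correctly note that such a map would have to reconcile $\Rise$ versus $\Val$, $t^{a_i}$ versus $q^{d_i+1}$, and $X^{\PF}$ simultaneously, and no candidate is given. If your goal is to engage with what this paper actually establishes, the tractable statement is the equality of the four specialized distributions $D^{\inv}$, $D^{\maj}$, $D^{\dinv}$, $D^{\minimaj}$ on (extended) ordered multiset partitions via insertion maps and the minimaj recursion; that is where the paper's proofs live, and your proposal does not touch it.
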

The Delta Conjecture has two versions, the \emph{rise version} (Equation \ref{delta1}) and the \emph{valley version} (Equation \ref{delta2}), which are different generating functions about parking functions. The Delta Conjecture is still open, but several cases of the Delta Conjecture have been proved. The conjecture for $\Delta_{e_1}e_n$ is proved by Haglund, Remmel and
the second author \cite{HRW};
the rise version Delta Conjecture at $q = 1$ is proved by Romero \cite{Romero}; the ``Catalan'' case of the conjecture is proved by Zabrocki \cite{Zabdelta}. 
The Delta Conjecture at the case when $t$ or $q$ equals $0$ is proved by Garsia, Haglund, Remmel and Yoo \cite{Delta0}; the second author \cite{wilson}; Rhoades \cite{Rhoades}; Haglund, Rhoades and Shimozono \cite{Delta01}. 

In \cite{HRW}, the authors also conjectured a combinatorial formula for the expression $\Delta'_{e_k} \Delta_{h_r}e_n$ which we call the \emph{Extended Delta Conjecture}, and the combinatorial side is a generating function of the set of \emph{extended word parking functions with blank valleys}. The \emph{Extended Delta Conjecture} of Haglund, Remmel and the second author \cite{HRW} is as follows.

\begin{conjecture}[Rise version of the Extended Delta Conjecture \cite{HRW}]\label{conjecture:deltar1}
	For any positive integers $n$, $k$, and $r$ with $k<n$,
	$$
	\Delta'_{e_k}\Delta_{h_r}e_{n}=\sum_{\PF\in\WPFc_{n;r}}t^{\area(\PF)}q^{\dinv(\PF)}x^\PF \prod_{i\in \Rise(\PF)}\left(1+\frac{z}{t^{a_i(\PF)}}\right) \bigg|_{z^{n-k-1}},
	$$
\end{conjecture}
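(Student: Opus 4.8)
\medskip
\noindent\textbf{Proof proposal.}

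The plan is to prove a \emph{compositional} refinement of the identity, in the spirit of the Carlsson--Mellit proof of \tref{shuffle} and of the subsequent proofs of the rise Delta Conjecture, and then to recover the stated form by summing over the refinement. To each parking function $\PF\in\WPFc_{n;r}$ I would attach the composition of $n$ recording the successive diagonal-return points of its underlying lattice path, the auxiliary $z$-grading that, upon extracting $z^{\,n-k-1}$, produces the rise factor $\prod_{i\in\Rise(\PF)}(1+z/t^{a_i(\PF)})$, and a marking that records the $r$ ``large'' (blank-valley) cars coming from $\Delta_{h_r}$. Correspondingly, on the symmetric-function side one wants to realize the $z$-deformation of $\Delta_{h_r}\nabla e_n$ whose $z^{\,n-k-1}$ coefficient equals $\Delta'_{e_k}\Delta_{h_r}e_n$ as an explicit sum, indexed by the same compositions, of products of operators drawn from (a suitable extension of) the Dyck path algebra of Carlsson and Mellit --- operators whose combinatorial meaning is ``prepend one more diagonal run, i.e.\ one more peak, to a parking function.''

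On the combinatorial side I would show that peeling off the lowest diagonal run of a parking function in $\WPFc_{n;r}$ is modeled exactly by the action of those operators. Three ingredients are needed: (a) a recursive description of how $\area$, $\dinv$, the rise set $\Rise$, and the arm statistics $a_i$ transform under this peeling, uniformly in $q$, $t$, and $z$; (b) control of the $r$ large cars, showing that they localize to a bounded portion of the diagram so that the recursion still closes; and (c) a matching of the $z$-weighted enumeration of contractible rises with the $z$-deformation used on the symmetric-function side. Granting these, both sides satisfy the same recursion with the same base case, which forces equality and hence the conjecture.

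The principal obstacle is the symmetric-function input: one must establish the operator identity that expands the $z$-deformed $\Delta_{h_r}\nabla e_n$ as the claimed composition sum. This is the \emph{extended} analogue of the Carlsson--Mellit identity for $\nabla e_n$, and the new difficulty is that it must simultaneously carry the $\Delta'_{e_k}$ decoration (which in the rise picture is a partial-parking-function / $z$-coefficient phenomenon) and the $\Delta_{h_r}$ decoration (an extra family of large cars). Making this work appears to require either enlarging the Dyck path algebra with generators adapted to $\Delta_{h_r}$, or passing through the ``new'' shuffle-type identities and their LLT-polynomial expansions that track both decorations at once; concretely, one needs a clean description of how $\Delta_{h_r}$ acts on the modified Macdonald polynomials $\MH_\mu$ that is compatible with the relevant operator calculus. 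Proving those plethystic and commutation relations is where essentially all of the difficulty lies; everything downstream is a substantial but routine parking-function computation.

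As a check on the recursion's initial data --- and as the first genuinely verifiable instances --- I would specialize to $t=0$ and to $q=0$. There the symmetric-function side collapses to a Hall--Littlewood (equivalently, Tesler-matrix) expression and the parking-function side collapses to a tractable statistic on extended ordered multiset partitions, so both the base case and one step of the recursion can be confirmed by a direct computation, simultaneously recovering the known $t$-zero and $q$-zero cases of the conjecture.
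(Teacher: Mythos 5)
The statement you are trying to prove is a \emph{conjecture} in this paper: the rise version of the Extended Delta Conjecture is quoted from \cite{HRW}, is nowhere proved here, and is explicitly listed in Section 6 as the main open problem. So there is no proof in the paper to compare your proposal against. What the paper actually establishes is much weaker: it introduces the valley analogue (\cref{deltar2}), proves via \tref{combor} and the equidistribution of inv, maj, dinv and minimaj on extended ordered multiset partitions (Sections 4 and 5) that the combinatorial sides of the rise and valley versions coincide when $q$ or $t$ is specialized to $0$, and then invokes the theorem of D'Adderio, Iraci and Wyngaerd (\tref{michele}) to conclude the conjectures at those specializations. None of this touches the general $q,t$ statement.

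Your proposal likewise does not prove it: it is a program whose essential step is deferred, and you say so yourself. The missing ingredient is the extended, $z$-deformed compositional operator identity that would expand $\Delta'_{e_k}\Delta_{h_r}e_{n}$ (or its $z$-deformation) as a sum over compositions of operators in an enlarged Dyck path algebra, compatible with peeling off the lowest diagonal run of an extended parking function with $r$ blank valleys. Establishing how $\Delta_{h_r}$ interacts with the Carlsson--Mellit operator calculus (or, equivalently, producing the relevant plethystic/commutation relations and LLT expansions that track both the $\Delta'_{e_k}$ and $\Delta_{h_r}$ decorations) is precisely the open difficulty; without it, the ``both sides satisfy the same recursion'' argument has no symmetric-function side, and the combinatorial bookkeeping in steps (a)--(c) never gets off the ground. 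Your proposed sanity check at $t=0$ and $q=0$ is consistent with what is actually known --- those cases are theorems, obtained exactly by the equidistribution results of this paper together with \cite{Michele} --- but verifying a base case and one step of an unestablished recursion does not close the general conjecture. As written, the proposal should be regarded as a plausible outline for future work, not a proof of the statement.
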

\noindent which is analogous to the rise version Delta Conjecture.

By defining contractible valley set of parking functions with blank valleys, we conjecture the following.
\begin{conjecture}[Valley version of the Extended Delta Conjecture]\label{conjecture:deltar2}
	For any positive integers $n$, $k$, and $r$ with $k<n$,
	$$
	\Delta'_{e_k}\Delta_{h_r}e_{n}=\sum_{\PF\in\WPFc_{n;r}}t^{\area(\PF)}q^{\dinv(\PF)}x^\PF \prod_{i\in \Val(\PF)}\left(1+\frac{z}{q^{d_i(\PF)+1}}\right) \bigg|_{z^{n-k-1}}.
	$$
\end{conjecture}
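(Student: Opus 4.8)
The plan is to deduce Conjecture~\ref{conjecture:deltar2} from the rise version, Conjecture~\ref{conjecture:deltar1}, by establishing a bijective equidistribution between the rise-marked and valley-marked weighted parking-function statistics that holds \emph{before} any specialization of $q$ or $t$, and then to prove the rise version itself through the compositional $\mathbb{A}_{q,t}$-algebra machinery of Carlsson and Mellit, augmented by the Theta-operator calculus that D'Adderio, Iraci and Wyngaerd used for the $t=0$ case. Concretely, one expands the products $\prod_{i\in\Val(\PF)}(1+zq^{-(d_i(\PF)+1)})$ and $\prod_{i\in\Rise(\PF)}(1+zt^{-a_i(\PF)})$, reads off the coefficient of $z^{n-k-1}$, and asks for a weight-preserving bijection between pairs (parking function, chosen $j$-subset of valleys) and pairs (parking function, chosen $j$-subset of rises) that fixes $X^{\PF}$, matches $q^{\dinv}\cdot q^{-\sum_i(d_i+1)}$ on the valley side with $t^{\area}\cdot t^{-\sum_i a_i}$ on the rise side, and acts locally on maximal runs of equal ranks by a staircase rearrangement of the car coordinates --- the same kind of move that, after one variable is killed, becomes the ordered-multiset-partition correspondence developed in this paper, but now carrying the full pair of statistics.

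For the rise version I would refine $\Delta'_{e_k}\Delta_{h_r}e_n$ by the composition $\alpha$ recording the sizes of the return blocks of a parking function, producing compositional operators $C_\alpha$, and then set up the recursion in the Dyck-path algebra: classify parking functions according to whether the lowest car rests on the base or is lifted by one unit of $\area$ (the $d_-$ versus $d_+$ dichotomy), so that the compositional rise sum $\mathrm{Rise}_\alpha(q,t)$ satisfies the same recurrence as $C_\alpha$ applied to the operator side. The $\Delta_{h_r}$ factor is absorbed using the $\Theta_{h_r}$ identities together with the commutation of $\Theta$ with $\nabla$, and the induction closes after checking the base cases and invoking the known $e$-expansion recursions for $\Delta'_{e_k}\Delta_{h_r}e_n$.

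The main obstacle is the first step for general $q$ and $t$ simultaneously. When $q=0$ or $t=0$ one statistic collapses, the parking functions degenerate to extended ordered multiset partitions, and the rise-to-valley correspondence becomes a tractable rearrangement; with both parameters present, $\area$ and $\dinv$ are globally entangled --- trading a rise for a valley by sliding a car perturbs the $\dinv$ contribution of many unrelated pairs of cars --- and no bijection of the required type is known even in the ordinary ($r=0$) Delta Conjecture. If a direct bijection resists, the fallback is to prove the equidistribution at the level of symmetric functions, by showing that the valley sum --- with its $q^{-(d_i+1)}$ weights --- is itself compatible with the $d_+$/$d_-$ recursion of the second step; the technical heart there is a bookkeeping lemma describing how the $d_i$-values of \emph{all} valleys change when a cell is added to a parking function, an analogue of the $\dinv$-correction identities behind the Shuffle Theorem.
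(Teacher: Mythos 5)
There is a fundamental mismatch here: the statement you are trying to prove is a \emph{conjecture} in the paper, not a theorem. The paper itself does not prove Conjecture~\ref{conjecture:deltar2} in general; it only (i) verifies it computationally for $n\le 10$, and (ii) proves the specializations $t=0$ and $q=0$, by showing via \tref{combor} that at these specializations both sides of the rise and valley versions become generating functions over extended ordered multiset partitions, proving the equidistribution of inv, maj, dinv and minimaj on $\OP_{r;\beta,k}$ (Theorems~\ref{theorem:eq1} and~\ref{theorem:eq2}), and then invoking the D'Adderio--Iraci--Wyngaerd result (\tref{michele}) for the rise version at $t=0$. So there is no ``paper proof'' of the full statement against which your argument could be favorably compared, and any complete proof you offer would have to clear a bar that the authors explicitly identify as open.

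Your proposal does not clear that bar; both of its pillars are themselves open problems that you describe rather than solve. The first step --- a weight-preserving bijection between valley-decorated and rise-decorated parking functions matching $q^{\dinv}q^{-\sum_i(d_i+1)}$ against $t^{\area}t^{-\sum_i a_i}$ with both parameters generic --- is exactly the content of the paper's Problem asking for $\Rise_{r;n,k}[X;q,t]=\Val_{r;n,k}[X;q,t]$, and you concede yourself that no such bijection is known even when $r=0$; the ``staircase rearrangement on maximal runs of equal ranks'' is not defined, and no argument is given that any local move can control the global entanglement of $\dinv$ that you correctly flag as the obstruction. The second step, proving the rise version \cref{deltar1} by a compositional refinement $C_\alpha$ inside the Carlsson--Mellit algebra with $\Delta_{h_r}$ ``absorbed'' by Theta-operator identities, is likewise only a sketch: you do not exhibit the refined recursion, do not show that the operator side satisfies it (the compositional shuffle machinery is not known to commute in the needed way with $\Delta'_{e_k}\Delta_{h_r}$), and do not check base cases. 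At the time of this paper that rise version was open as well, and its eventual resolution required substantially new symmetric-function technology beyond what you invoke. The ``fallback'' of proving equidistribution at the symmetric-function level via a bookkeeping lemma for how all $d_i$ change under adding a cell is again a statement of what would need to be proved, not a proof. In short, what you have is a reasonable research program aimed at an open conjecture, not a proof of the statement; the only parts of the statement that admit proof by the paper's methods are the $q=0$ and $t=0$ cases, which your outline bypasses rather than reproduces.
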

\noindent We call \cref{deltar2} the \emph{valley version Extended Delta Conjecture}.
Very recently, D'Adderio, Iraci and Wyngaerd  \cite{Michele} proved this conjecture in the case $t=0$. However, the valley version conjecture of $\Delta'_{e_k} \Delta_{h_r}e_n$ is new and has not appeared anywhere before. We believe that the valley version conjecture is true since we have verified the conjecture for $n\leq 10$ by Maple programs, and we have also proved the valley version conjecture at the case when $t$ or $q$ is zero.

The organization of this paper is as follows. In Section 2, we shall introduce some background about symmetric functions and parking functions related to the Delta Conjecture.
In Section 3, we  introduce ordered multiset partitions, extended ordered multiset partitions and their connections to the Delta Conjectures. In Section 4, we  prove that the statistics inv, maj and dinv are equi-distributed by three insertion algorithms. In Section 5, we prove that the statistics inv and minimaj are equi-distributed by generalizing a method of Rhoades \cite{Rhoades}, which completes a proof of the {valley version} conjecture of $\Delta'_{e_k} \Delta_{h_r}e_n$ when $t$ or $q$ equals 0. In Section 6, we give a brief summary of this paper and point out some future directions of this research.

\section{Background}
We shall introduce some algebraic and combinatorial background about symmetric functions and parking functions that is involved in the Delta Conjecture. We shall start with definitions about symmetric functions.

The \emph{symmetric group} $\Snn$ is the set of permutations of size $n$.
Given any permutation $\sg=\sg_1\cdots\sg_n\in\Snn$, the \emph{descent number} of $\sg$ is defined to be $\des(\sg):=|\{i:\sg_i>\sg_{i+1}\}|$, and the \emph{major index} of $\sg$ is
$\maj(\sg):=\sum_{\sg_i>\sg_{i+1}} i$.

For any integer $n$, a weakly decreasing sequence of positive integers $\la=(\la_1,\ldots,\la_k)$ is a \emph{partition} (or an \emph{integer partition}) of $n$ if 
$\sum_{i=1}^{k} \la_i = n$, written $\la\vdash n$. We let  $|\la|=n$ and $\ell(\la)=k$ denote the size and length (number of parts) of the partition $\la$. 

A \emph{weak composition} of an integer $n$ is defined to be a sequence of \emph{non-negative} integers $\al=(\al_1,\ldots,\al_k)$ such that
$\sum_{i=1}^{k} \al_i = n$, written $\al\vDash n$; and a \emph{strong composition} of $n$ is defined to be a sequence of \emph{positive} integers $\al=(\al_1,\ldots,\al_k)$ such that
$\sum_{i=1}^{k} \al_i = n$, written $\al\svDash n$. We let $|\al|=n$ and $\ell(\al)=k$ denote the size and the length of the composition $\al$, respectively.

For each partition $\la=(\la_1,\ldots,\la_k)\vdash n$, we can associate to the partition a \emph{Ferrers diagram} in French notation, which is a diagram with $n$ squares such that there are $\la_i$ squares in the \thn{i} row, counting from bottom to top. 
For each cell $c\in\la$, we let the \emph{coarm}  of $c$, $a_{\la}'(c)$, be the number of cells to the right of $c$; the \emph{coleg}  of $c$, $\ell_{\la}'(c)$, be the number of cells below $c$. We often abbreviate the notations to $a'(c)$ and $\ell'(c)$, and we let $(a_{\la}'(c), l_{\la}'(c))$ denote the coordinate of $c$. \fref{Ferrers} shows an example of the Ferrers diagram of a partition $\la=(7,7,5,3,3)\vdash 25$.

\begin{figure}[ht!]
	\centering
	\vspace{-1mm}	
	\begin{tikzpicture}[scale=0.5]
	\draw[help lines] (0,0) grid (5,3);
	\draw[help lines] (0,0) grid (3,4);
	\draw[help lines] (0,0) grid (7,2);
	\fillll{3}{3}{c};
	\draw[<->,thick] (2.5,0) -- (2.5,2);
	\draw[<->,thick] (0,2.5) -- (2,2.5);
	\node at (3.8,.8) {\footnotesize $\ell_{\la}'(c)$};
	\node at (1,1.8) {\footnotesize $a_{\la}'(c)$};
	\node at (9,2) {\small
		\begin{tabular}{r@{\hskip 1.5mm}l}
		$a_{\la}'(c)= 2$\\
		$\ell_{\la}'(c)= 2$\\
		\end{tabular}};
	\end{tikzpicture}
	\caption{The Ferrers diagram of the partition $\la=(7,7,5,3,3)$.}
	\label{fig:Ferrers}
\end{figure}
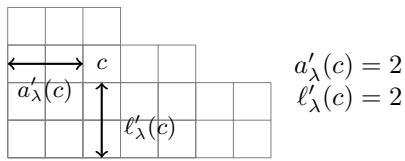

Now let $\la$ be a partition of $n$. We can fill the cells of the Ferrers diagram of $\la$ with positive integers to obtain a \emph{tableau} $T$. The set of tableaux of shape $\la$ is denoted by $\Inj(\la)$.  We also use $T$ to denote the multiset of the filled integers, and we write $X^T:=\prod_{i\in T} x_i$.

Let $\La$ denote the ring of symmetric functions with coefficients in $\C(q,t)$, and let $\La^{(n)}$ denote the elements of $\La$ that are homogeneous of degree $n$. The \emph{elementary symmetric function basis} $\{e_\mu\}_{\mu\vdash n}$ of $\La^{(n)}$ is defined by 
$$
e_k:=\sum_{i_1<\cdots<i_k}x_{i_1}\cdots x_{i_k},\quad\mbox{and}\quad e_\mu:=e_{\mu_1}\cdots e_{\mu_{\ell(\mu)}},
$$
and the \emph{homogeneous symmetric function} basis $\{h_{\mu}\}_{\mu\vdash n}$ is
defined by 
$$
h_k:=\sum_{i_1\leq\cdots\leq i_k}x_{i_1}\cdots x_{i_k},\quad\mbox{and}\quad h_\mu:=h_{\mu_1}\cdots h_{\mu_{\ell(\mu)}}.
$$

Macdonald \cite{Macbook} introduced  a family of orthogonal symmetric functions known as \emph{Macdonald polynomials}, which have nice mathematical and physical properties.
Macdonald polynomials have several transformations, and the form that we are using is called the \emph{modified Macdonald polynomials} $\MH_{\mu}[X;q,t]$ indexed by partitions $\mu\vdash n$. One combinatorial way to define $\MH_\mu[X;q,t]$ is due to the work of Haglund, Haiman and Loehr \cite{HHL}:
$$
\MH_\mu[X;q,t] := \sum_{T\in\Inj(\mu)} q^{\inv(T)}t^{\maj(T)}X^T,
$$
where inv and maj are two statistics defined on the tableau $T$. We shall often abbreviate $\MH_\mu[X;q,t]$ to $\MH_\mu$.

The symmetric function operators nabla ($\nabla$), delta ($\Delta$), and delta prime ($\Delta'$) are eigenoperators of Macdonald polynomials defined by Bergeron and Garsia \cite{GB}.
For any partition $\mu\vdash n$, we let
\begin{equation*}
B_\mu := \sum_{c \in\mu} q^{a'(c)} t^{l'(c)}
\quad\mbox{and}\quad
T_\mu := \prod_{c \in\mu} q^{a'(c)} t^{l'(c)}
\end{equation*}
be polynomials defined from the Ferrers diagram of $\mu$. Given a modified Macdonald polynomial $\MH_{\mu}[X;q,t]$, the operator \emph{nabla} acts by
\begin{equation*}
\nabla \MH_{\mu}:= T_\mu \MH_{\mu}
\end{equation*}
and we extend by scalars to obtain a symmetric function operator.
Let $f$ be a given symmetric function, then $\Delta_f$ and $\Delta'_f$ are the operators such that
\begin{equation*}
\Delta_f \MH_{\mu}:= f[B_\mu] \MH_{\mu}, \quad \Delta'_f \MH_{\mu}:= f[B_\mu-1] \MH_{\mu}, 
\end{equation*}
where $f[B_\mu]$ and $f[B_\mu-1]$ are plethystic expressions which can be thought of as substitutions.

For example, for the partition $\mu=(3,1)\vdash 4$, we can first draw its Ferrers diagram, and fill in each cell $c\in\mu$ the weight $q^{a'(c)} t^{l'(c)}$. This process is pictured in \fref{ptnmu}.
\begin{figure}[ht!]
	\centering
	\begin{tikzpicture}[scale=.6]
	\draw[thick] (3,1) grid (0,0) grid (1,2);
	\fillll{1}{1}{1};\fillll{2}{1}{q};\fillll{3.1}{1.1}{q^2};\fillll{1}{2}{t};
	\end{tikzpicture}
	\caption{A partition $\mu=(3,1)$.}
	\label{fig:ptnmu}
\end{figure}
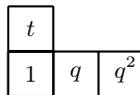

By definition, we have $B_{(3,1)}=1+q+q^2+t$, $T_{(3,1)}=q^3t$, and 
$\nabla \MH_{(3,1)}= q^3t\: \MH_{(3,1)}$.
Setting the symmetric function $f=e_2$, then 
\begin{equation*}
\Delta_{e_2} \MH_{(3,1)} = e_2[1+q+q^2+t]\: \MH_{(3,1)}= (q+q^2+t+q^3+qt+q^2t)\: \MH_{(3,1)},
\end{equation*}
and 
\begin{equation*}
\Delta'_{e_2} \MH_{(3,1)} = e_2[q+q^2+t]\: \MH_{(3,1)}= (q^3+qt+q^2t)\: \MH_{(3,1)}.
\end{equation*}

Note that for $\mu\vdash n$, $e_n[B_\mu] = e_{n-1}[B_\mu -1] = T_\mu$, thus for any $F \in \La^{(n)}$,
$\nabla F = \Delta_{e_n} F = \Delta'_{e_{n-1}} F$. 
Furthermore, since $e_k[X+1]=e_k[X]+e_{k-1}[X]$, we have the following relation between the operators $\Delta$ and $\Delta'$:
\begin{equation}
\Delta_{e_k} = \Delta'_{e_k} +\Delta'_{e_{k-1}}.
\end{equation}

For integer $n,k$, we define the $q$-analogues of $n$, $n!$ and $\binom{n}{k}$ to be
$$
[n]_q:=\frac{1-q^n}{1-q}, \quad [n]_q!: = [1]_q [2]_q \cdots [n]_q, \quad\mbox{and}\quad
\qbinom{n}{k} := \frac{\qn{n}!}{\qn{k}!\qn{n-k}!}.
$$

We also define several combinatorial objects that are related to the Delta Conjecture.

Let $n$ be a positive integer. An $(n,n)$-{\em Dyck path} $P$ is a lattice path from $(0,0)$ to $(n,n)$ which always remains weakly above the main diagonal $y=x$. The number of Dyck paths of size $n$ is given by the \thn{n} Catalan number $C_n=\frac{1}{n+1}\binom{2n}{n}$. We let $\Dyck_n$ denote the set of Dyck paths of size $n$.

For a Dyck path $P\in\Dyck_n$, the cells that are cut through by the main diagonal are called  \emph{diagonal cells}, and the cells between the diagonal cells and the path are called \emph{area cells}. We call the main diagonal the \thn{0} diagonal; we call the line that parallel to and above the main diagonal with distance $i$ the \thn{i} diagonal.

Given an $(n,n)$-Dyck path $P$, an \emph{$(n,n)$-word parking function} (or a \emph{labeled Dyck path}) PF is obtained by labeling the north steps of $P$ with positive integers such that the labels (called \emph{cars}) are strictly increasing along each column of $P$. We let $\ell_i(\PF)$ be the \thn{i} row label of PF. 
We let $\WPFc_n$ denote the set of $(n,n)$-word parking functions. We shall also omit ``word" to call $\PF\in\WPFc_n$ a \emph{parking function} in this paper.

For a parking function $\PF\in\WPFc_n$, let $a_i(\PF)$ be the number of full cells between the path and the diagonal in the \thn{i} row counting from bottom to top, and let
\begin{multline*}
d_i(\PF):=\big|\{(i,j):i<j,\ a_i(\PF)=a_j(\PF)\textnormal{ and }\ell_i(\PF)<\ell_j(\PF)\}\\
\cup\{(i,j):i<j,\ a_i(\PF)=a_j(\PF)+1\textnormal{ and }\ell_i(\PF)>\ell_j(\PF)\}\big|,
\end{multline*}
then $\area(\PF):=\sum_{i=1}^{n}a_i(\PF)$ is the \emph{area} of PF and $\dinv(\PF):=\sum_{i=1}^{n}d_i(\PF)$ is the \emph{dinv} of PF. \fref{areadinv} gives an example of a $(7,7)$-parking function with area 13 and dinv 2.

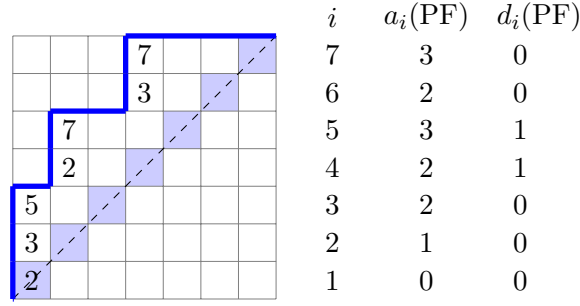
\begin{figure}[ht!]
	\centering
	\begin{tikzpicture}[scale=0.5]
	\fillshade{1/1,2/2,3/3,4/4,5/5,6/6,7/7}
	\Dpath{0,0}{7}{7}{0,0,0,1,1,3,3,-1};
	\PFtext{0,0}{0/2,0/3,0/5,1/2,1/7,3/3,3/7};
	\PFad{8,0}{1/0/0,2/1/0,3/2/0,4/2/1,5/3/1,6/2/0,7/3/0};
	\end{tikzpicture}
	\caption{A $(7,7)$-parking function with area 13 and dinv 2.}
	\label{fig:areadinv}
\end{figure}

For a word parking function $\PF\in\WPFc_n$, we define the \emph{label weight} (or \emph{car weight}) of PF to be
$$
X^{\PF} := \prod_{i=1}^{n} x_{\ell_i(\PF)}.
$$
Then all statistics involved in the Shuffle Theorem have been defined.
The Delta Conjecture also requires the following combinatorial terminology about parking functions. For a parking function $\PF\in\WPFc_{n}$, 
we define 
\begin{eqnarray*}
	\valley(\PF) &:=& \{i : a_i(\PF)\leq a_{i-1}(\PF)\},\\
	\Rise(\PF) &:=& \{i : a_i(\PF) = a_{i-1}(\PF)+1\},\quad\mbox{and}\\
	\Val(\PF) &:=& \{i : a_i(\PF)< a_{i-1}(\PF)\mbox{ or }a_i(\PF)=a_{i-1}(\PF)\mbox{ and }\ell_i(\PF)>\ell_{i-1}(\PF)\}
\end{eqnarray*}
to be the sets of \emph{valleys}, 
\emph{double rises} and \emph{contractible valleys} of PF.

We denote the right hand sides of Equations (\ref{delta1}) and (\ref{delta2}) by $\Rise_{n,k}[X;q,t]$ and $\Val_{n,k}[X;q,t]$:
\begin{eqnarray*}
\Rise_{n,k}[X;q,t]&=&\sum_{\PF\in\WPFc_n}t^{\area(\PF)}q^{\dinv(\PF)} X^{\PF}
\prod_{i\in \Rise(\PF)}
\left(1+\frac{z}{t^{a_i(\PF)}} \right)\bigg|_{z^{n-k-1}}\\
\Val_{n,k}[X;q,t]&=&\sum_{\PF\in\WPFc_n}t^{\area(\PF)}q^{\dinv(\PF)} X^{\PF}
\prod_{i\in \Val(\PF)}
\left(1+\frac{z}{q^{d_i(\PF)+1}} \right)\bigg|_{z^{n-k-1}}.
\end{eqnarray*}

Consider the factor $t^{\area(\PF)}\prod_{i\in \Rise(\PF)}
\left(1+\frac{z}{t^{a_i(\PF)}} \right)\Big|_{z^{n-k-1}}$ in $\Rise_{n,k}[X;q,t]$. Each term in the expansion of this factor is a power of $t$, and the power is $\area(\PF)$ minus $n-k-1$ row-areas $a_i(\PF)$ of the double rise rows. Similarly, in the factor $q^{\dinv(\PF)}\prod_{i\in \Val(\PF)}
(1+\frac{z}{q^{d_i(\PF)+1}})\Big|_{z^{n-k-1}}$ in $\Val_{n,k}[X;q,t]$, each term is a power of $q$, and the power is $\dinv(\PF)$ minus $n-k-1$ row-dinvs $(d_i(\PF)+1)$ of the contractible valley rows. Thus, if we define
\begin{eqnarray*}
	\WPFc_{n,k}^{\Rise} &:=& \{(\PF,R): P\in\WPFc_n, R\subseteq \Rise(\PF), |R|=k  \},\\
	\WPFc_{n,k}^{\Val} &:=& \{(\PF,V): P\in\WPFc_n, V\subseteq \Val(\PF), |V|=k  \}
\end{eqnarray*}
and let 
\begin{eqnarray*}
	\area^-(\PF,R)&:=&\sum_{i\in [n]\backslash R} a_i(\PF),\\
	\dinv^-(\PF,V)&:=&\sum_{i\in [n]\backslash V} d_i(\PF) - |V|,
\end{eqnarray*}
then
\begin{eqnarray*}
	\Rise_{n,k}[X;q,t]&=&\sum_{(\PF,R)\in\WPFc_{n,n-k-1}^{\Rise}}t^{\area^-(\PF,R)}q^{\dinv(\PF)} X^{\PF},\\
	\Val_{n,k}[X;q,t]&=&\sum_{(\PF,V)\in\WPFc_{n,n-k-1}^{\Val}}t^{\area(\PF)}q^{\dinv^-(\PF,V)} X^{\PF},
\end{eqnarray*}
and the Delta Conjecture can be stated as
$$
\Delta'_{e_k}e_n=\Rise_{n,k}[X;q,t]=\Val_{n,k}[X;q,t]
$$
for any integers $n> k\geq 0$.
 
We call a pair $(\PF,R)\in\WPFc_{n,k}^{\Rise}$ (or $(\PF,V)\in\WPFc_{n,k}^{\Val}$) a \emph{rise-decorated} (or \emph{valley-decorated}) parking function, which can be seen as a parking function PF with $k$ rows in $\Rise(\PF)$ (or $\Val(\PF)$) marked with a star $\ast$. \fref{MWPF} shows examples of rise-decorated and valley-decorated parking functions.

\begin{figure}[ht]
	\centering	
	\begin{tikzpicture}[scale=0.5]
	\fillshade{1/1,2/2,3/3,4/4,5/5,6/6,7/7}
	\Dpath{0,0}{7}{7}{0,0,0,1,1,3,4,-1};
	\PFtext{0,0}{0/2,0/3,0/4,1/2,1/5,3/1,4/3};
	\fillsome{0/2/\ast,1/5/\ast}
	\end{tikzpicture}
	\qquad\qquad
	\begin{tikzpicture}[scale=0.5]
	\fillshade{1/1,2/2,3/3,4/4,5/5,6/6,7/7}
	\Dpath{0,0}{7}{7}{0,0,0,1,1,3,4,-1};
	\PFtext{0,0}{0/2,0/3,0/4,1/2,1/5,3/1,4/3};
	\fillsome{3/6/\ast,4/7/\ast}
	\end{tikzpicture}
	\caption{Examples: parking functions in $\WPFc_{7,2}^{\Rise}$ and $\WPFc_{7,2}^{\Val}$.}
	\label{fig:MWPF}
\end{figure}
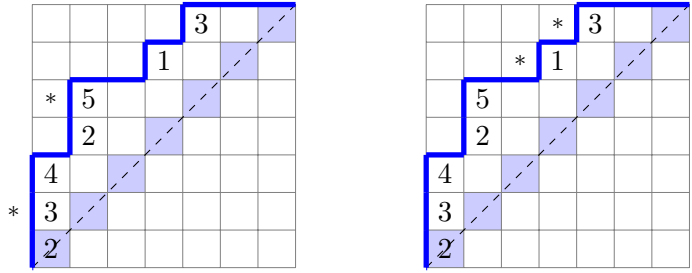

Now we shall give details of the Extended Delta Conjecture.
Given an $(n,n)$-Dyck path $P$, recall that the valley set of $P$ is defined to be
$$
\valley(P):=\{i:a_i(P)<a_{i-1}(P)\}.
$$
We say that a word-labeling of a Dyck path \emph{has $r$ blank valleys} if there are $r$ valleys not receiving a label. Such labeled Dyck paths are called \emph{extended word parking functions}. We let $\WPFc_{n;r}$ denote the set of extended word parking functions of size $n+r$ with $r$ blank valleys. \fref{blankvalley} shows an example of a parking function in the set $\WPFc_{5;2}$. 
\begin{figure}[ht]
	\centering	
	\begin{tikzpicture}[scale=0.5]
	\fillshade{1/1,2/2,3/3,4/4,5/5,6/6,7/7}
	\Dpath{0,0}{7}{7}{0,0,0,1,1,3,4,-1};
	\PFtext{0,0}{0/2,0/3,0/4,1/{\ },1/5,3/3};
	\PFad{8,0}{1/0/0,2/1/0,3/2/0,4/2/1,5/3/2,6/2/0,7/2/0};
	\end{tikzpicture}
	\caption{A $(7,7)$-extended parking function with $2$ blank valleys.}
	\label{fig:blankvalley}
\end{figure}
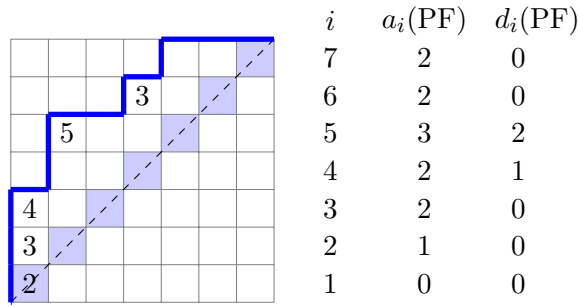

A more convenient way to draw an extended word parking function is that, we can fill the blank valleys with 0's, thus an extended word parking function is a parking function with labels in $\Z_{\geq 0}$ such that $0$ does not appear in the first row (since the first row is not a valley).

With 0's in the blank valley positions, we can define the area and dinv components $a_i(\PF)$ and $d_i(\PF)$ on each parking functions in $\WPFc_{n;r}$ in the same way. We still let $\Rise(\PF)=\{i: a_i(\PF) = a_{i-1}(\PF)+1\}$ denote the double rise set. For sake of labeling the blank valleys with 0's, we can define the contractible valley set $\Val(\PF)$ in the same way as normal word parking functions. 

Further, we can define the set of \emph{rise-decorated} (or \emph{valley-decorated}) \emph{parking functions with blank valleys}. The set of {rise-decorated} (or {valley-decorated}) {parking functions with $n$ cars, $r$ blank valleys} and $k$ marked double rises (or contractible valleys) is denoted by
$\WPFc_{r;n,k}^{\Rise}$ (or $\WPFc_{r;n,k}^{\Val}$).

We let $\Rise_{r;n,k}[X;q,t]$ denote the combinatorial side of \cref{deltar1} and $\Val_{r;n,k}[X;q,t]$ denote the combinatorial side of \cref{deltar2}. Notice that the combinatorial sides of the two conjectures could also be written as generating functions of the sets $\WPFc_{r;n,n-k-1}^{\Rise}$ and $\WPFc_{r;n,n-k-1}^{\Val}$, i.e.\ we have
\begin{eqnarray*}
	\Rise_{r;n,k}[X;q,t]&=&\sum_{(\PF,R)\in\WPFc_{r;n,n-k-1}^{\Rise}}t^{\area^-(\PF,R)}q^{\dinv(\PF)}x^\PF ,\\
	\Val_{r;n,k}[X;q,t]&=&\sum_{(\PF,V)\in\WPFc_{r;n,n-k-1}^{\Val}}t^{\area(\PF)}q^{\dinv^-(\PF,V)}x^\PF.
\end{eqnarray*}

\section{Extended ordered multiset partitions}
\subsection{Ordered set partitions and ordered multiset partitions}
Let $n\geq 0$ be any integer. A {\em set partition} $\pi$ of the set $[n]= \{1, \ldots, n\}$ is a family of nonempty, pairwise disjoint subsets $B_1,B_2,\ldots,B_k$ of $[n]$ called {\em parts} (or {\em blocks}) such that $\cup^k_{i=1}B_i=[n]$. We let $\ell(\pi)$ denote the number of parts in $\pi$ and 
$|\pi|=n$ denote the size of $\pi$.  
We let $\min(B_i)$ and $\max(B_i)$ denote the minimum and maximum elements 
of $B_i$ and we use the convention that we order the parts so that  
$\min(B_1)< \cdots < \min(B_k)$. To simplify notation, we 
shall write $\pi$ as $B_1/ \cdots /B_k$. Thus we would write 
$\pi =134/268/57$ for the set partition $\pi$ of $[8]$ with parts 
$B_1 = \{1,3,4\}$, $B_2 = \{2,6,8\}$ and $B_3=\{5,7\}$. 

An {\em ordered set partition} with underlying 
set partition $\pi$ is just a permutation of the parts of $\pi$, 
i.e.\ $\delta =B_{\sg_1}/ \cdots /B_{\sg_k}$ for some permutation $\sg$ in the 
symmetric group $\Sn{k}$. 
For example, $\delta =57/134/268$ is an ordered set partition 
of the set $[8]$ with underlying set partition $\pi =134/268/57$. 

Let $\pi=B_1/ \cdots /B_k$ be an ordered set partition of $[n]$. The strong composition $\la(\pi)=(|B_1|,\ldots,|B_k|)$ is called the \emph{shape} of $\pi$.
We let $\OP_n$ denote the set of ordered set partitions of $[n]$, and $\OP_{n,k}$ denote the set of ordered set partitions of $[n]$ with $k$ parts. Further, we let $\OP_{n,\al}$ denote the set of ordered set partitions of $[n]$ with shape $\al$.

More generally, for a weak composition $\beta = \beta_1 \cdots \beta_\ell \vDash n$, an \emph{ordered multiset partition} with \emph{content} $\beta$ is defined to be a partition of the multiset $A(\beta)=\{i^{\beta_i}:1\leq i\leq \ell\}$ into several ordered sets called \emph{blocks} where \emph{repetition is not allowed} in each block. We denote the set of ordered multiset partitions with content $\beta$ by $\OP_{\beta}$. Similar, we have $\OP_{\beta,k}$ and $\OP_{\beta,\al}$. For example, $\pi=234/26/123$ is an ordered multiset partition in $\OP_{(1,3,2,1,0,1),(3,2,3)}$.

We shall define 4 statistics: inv, maj, dinv and minimaj on ordered multiset partitions.

Given $\pi=B_1/\cdots/B_k\in\OP_{\beta,k}$, the \emph{inversion} statistic $\inv(\pi)$ is defined to be the number of pairs $a>b$ such that $b$ is the minimum of its block, and $a$ is in some block that is strictly left of $b$'s block. Such pairs are called \emph{inversion pairs}. For example, $\pi =134/268/57$ has 4 inversions, and the inversion pairs are $(3,2),(4,2),(6,5),(8,5)$.

For an ordered partition $\pi=B_1/\cdots/B_k\in\OP_{\beta,k}$, let $B_i^h$ denote the \thn{h} smallest element in part $B_i$, then the \emph{diagonal inversion} of $\pi$ is defined to be 
$$
\dinv(\pi):=|\{(h,i,j):i<j, B_i^h>B_j^h\}\cup\{(h,i,j):i<j, B_i^h>B_j^{h+1}\}|,
$$
where the triples in the left set are called \emph{primary dinvs}, and the triples in the right set are called \emph{secondary dinvs}. For example, $\pi =134/268/57$ has 4 dinvs, which are all secondary dinvs: $(1,1,2),(1,1,3),(1,2,3),(2,1,2)$.

We let $\sg=\sg(\pi)$ of a partition $\pi\in\OP_{\beta,k}$ be the word obtained by writing each block $B_i$ in decreasing order for $i=1\cdots k$. We also define the index word $\ind(\pi) = 0^{|B_1|} 1^{|B_2|}\cdots(k-1)^{|B_k|}$. Then the \emph{major index} of $\pi$ is
$$
\maj(\pi) := \sum_{i:\sg_i>\sg_{i+1}} \ind(\pi)_{i+1}.
$$
For example, if $\pi=134/268/57$, then $\sg=43186275$, $\ind(\pi)=00011122$ and $\maj(\pi)=0+0+1+1+2=4$.

Given $\pi = B_1/\cdots/B_k \in\OP_{\beta,\al}$ where $\al=(\al_1,\ldots,\al_k)$, we first construct a word $\miniword(\pi)$ by organizing the elements in each block and list the organized blocks $B_1,\ldots,B_k$. We first organize the numbers in $B_k$ in increasing order. Then suppose that we have processed block $B_{i+1}$, we shall organize the numbers in $B_i$ by placing the numbers strictly bigger than the first number of $B_{i+1}$ first in increasing order, followed by the remaining numbers also in increasing order, then we place the organized numbers on the left of the existing sequence. For example, if $\pi=2/34/13/13/2$, then $\miniword(\pi)=23413312$. The \emph{minimum major index} of $\pi$ is defined by
$$
\minimaj(\pi):=\maj(\miniword(\pi)).
$$

The four statistics are closely related to the Delta Conjecture. Let 
$$
D_{\beta,k}^{\stat}(q) := \sum_{\pi\in\OP_{\beta,k}}q^{\stat(\pi)}
$$
where stat is one of the statistics \emph{inv, maj, dinv, minimaj},
Haglund, Remmel and the second author in \cite{HRW} proved that
\begin{theorem}[Haglund, Remmel and Wilson]\label{theorem:combo}
	For any integers $n,k$ and weak composition $\beta$,
	\begin{eqnarray}
	\Rise_{n,k}[X;q,0]|_{M_\beta} &=& D_{\beta,k+1}^{\dinv}(q),\\
	\Rise_{n,k}[X;0,q]|_{M_\beta} &=& D_{\beta,k+1}^{\maj}(q),\\
	\Val_{n,k}[X;q,0]|_{M_\beta} &=& D_{\beta,k+1}^{\inv}(q),\\
	\Val_{n,k}[X;0,q]|_{M_\beta} &=& D_{\beta,k+1}^{\minimaj}(q).
	\end{eqnarray}
\end{theorem}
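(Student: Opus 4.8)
The plan is to treat all four identities by a single three-step recipe. First, after the substitution ($t=0$ or $q=0$), use nonnegativity of the relevant parking-function statistic to show that only parking functions of a very rigid shape survive. Second, discard the superfluous Dyck-path data and read the cars of such a parking function off block by block, producing a content-preserving bijection onto the ordered multiset partitions of $A(\beta)$ into $k+1$ blocks; taking the coefficient of $M_\beta$ at the very end just fixes the content of $\PF$ to be $\beta$ and lands us in $\OP_{\beta,k+1}$. Third, check that the power of $q$ surviving the substitution is transported to the target statistic. Throughout I would work from the reformulations of $\Rise_{n,k}[X;q,t]$ and $\Val_{n,k}[X;q,t]$ given in Section~2, as generating functions over rise- and valley-decorated parking functions with $n-k-1$ marked rows and weights $t^{\area^-(\PF,R)}q^{\dinv(\PF)}X^{\PF}$ and $t^{\area(\PF)}q^{\dinv^-(\PF,V)}X^{\PF}$ respectively.

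For the two identities with $t=0$ (targets $\dinv$ and $\inv$): setting $t=0$ in the rise reformulation annihilates every term with $\area^-(\PF,R)>0$, i.e.\ forces $a_i(\PF)=0$ whenever $i\notin R$; since $R\subseteq\Rise(\PF)$ and every double rise has positive area, this forces $R=\Rise(\PF)$, hence $|\Rise(\PF)|=n-k-1$, and forces the Dyck path to be a concatenation of $k+1$ ``triangular bumps'' with area subsequences $0,1,\dots,h_j$. In each bump the cars occupy a single column and so form a strictly increasing (hence repetition-free) list, which is precisely a block; reading the bumps off the path gives a bijection onto $\OP_{n,k+1}$, and one checks that the primary and secondary dinv triples of the image $\pi$ correspond to the two families of dinv pairs of $\PF$, so $\dinv(\PF)=\dinv(\pi)$ once the order of the blocks and the within-block orientation are chosen correctly. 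This yields $\Rise_{n,k}[X;q,0]|_{M_\beta}=D^{\dinv}_{\beta,k+1}(q)$. For $\Val_{n,k}[X;q,0]$, setting $t=0$ in the valley reformulation forces $\area(\PF)=0$, so the path is $(NE)^n$; then the cars form an arbitrary word of content $\beta$, and marking $n-k-1$ of its contractible valleys — here just its strict ascents — and contracting the marked positions glues consecutive entries into $k+1$ increasing blocks, giving (after reading the blocks in the appropriate order) an ordered multiset partition $\pi$. A short injective argument shows $\dinv^-(\PF,V)\ge 0$ on these parking functions, and checking $\dinv^-(\PF,V)=\inv(\pi)$ gives $\Val_{n,k}[X;q,0]|_{M_\beta}=D^{\inv}_{\beta,k+1}(q)$.

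For the two identities with $q=0$ (targets $\maj$ and $\minimaj$): setting the first argument of $\Rise_{n,k}$ to $0$ annihilates every term with $\dinv(\PF)>0$, leaving the rise-decorated parking functions with $\dinv(\PF)=0$, weighted by $q^{\area^-(\PF,R)}$. Such parking functions are rigid in their own way — cars weakly decrease along each diagonal, subject to a compatibility condition between consecutive diagonals — and contracting the $n-k-1$ marked double rises merges each into the row beneath it in the same column, leaving $k+1$ ``super-rows'' whose car-sets, listed in a suitable order, are the blocks of an ordered multiset partition $\pi$; the surviving exponent $\area^-(\PF,R)$ should translate, via the index word $\ind(\pi)$, into $\maj(\pi)$, giving $\Rise_{n,k}[X;0,q]|_{M_\beta}=D^{\maj}_{\beta,k+1}(q)$. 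The last identity, $\Val_{n,k}[X;0,q]|_{M_\beta}=D^{\minimaj}_{\beta,k+1}(q)$, is the delicate one: since $\dinv^-(\PF,V)=\dinv(\PF)-\sum_{i\in V}(d_i(\PF)+1)$ is a priori a signed exponent, one must first show that summing over all decorations of a fixed $\PF$ never leaves a net negative power of $q$ — equivalently, that contracting a decorated valley drops $\dinv$ by at least one more than that valley's row-dinv — and only then read off the constant term. The surviving pairs (those with $\dinv^-(\PF,V)=0$) should then be repackaged as ordered multiset partitions with $k+1$ blocks, with $q^{\area(\PF)}$ matching $\minimaj(\pi)=\maj(\miniword(\pi))$.

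The step I expect to be the main obstacle is this last identity, on two counts. First, one must verify that the valley-version specialization is even well defined, which is a cancellation-free inequality requiring a careful injection. Second, and more seriously, the statistic match for $\minimaj$ is genuinely indirect, since $\minimaj$ is defined through the nontrivial reorganization producing $\miniword(\pi)$ rather than directly on the blocks, so the bijection of the second step must be designed so as to interact cleanly with that procedure and with its descent set; the same, to a lesser degree, applies to $\maj$, which passes through the auxiliary word $\sg(\pi)$. An alternative worth trying would be to deduce the two $q=0$ identities from the two $t=0$ ones by transporting the bijections along a labeled analogue of Haglund's zeta (sweep) map on Dyck paths, which trades an $\area$-type statistic for a $\dinv$-type one; this would reduce the problem to the equidistributions $D^{\dinv}_{\beta,k+1}=D^{\maj}_{\beta,k+1}$ and $D^{\inv}_{\beta,k+1}=D^{\minimaj}_{\beta,k+1}$ on ordered multiset partitions.
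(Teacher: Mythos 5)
Your overall strategy -- specialize, show that only rigid decorated parking functions survive, and biject those onto $\OP_{\beta,k+1}$ block by block -- is exactly the approach of the paper, which itself defers the proof to \cite{HRW} and merely records the four bijections $\ga^{\dinv},\ga^{\maj},\ga^{\inv},\ga^{\minimaj}$ in Appendix A. Your treatment of the two $t=0$ cases is essentially complete and matches $\ga^{\dinv}$ and $\ga^{\inv}$: forcing $\area^-=0$ does yield $R=\Rise(\PF)$ and a concatenation of $k+1$ triangular bumps $N^{m}E^{m}$ whose columns are the blocks, and forcing $\area=0$ does yield the staircase $(NE)^n$ with the marked strict ascents cutting the car word into $k+1$ repetition-free blocks.

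The genuine gap is in the two $q=0$ cases, above all $\minimaj$. There the surviving objects are valley-decorated pairs with $\dinv^-(\PF,V)=0$ weighted by $q^{\area(\PF)}$, and these are \emph{not} obtained by any simple contraction of a staircase: the map $\ga^{\minimaj}$ of Appendix A.4 processes the runs of $\miniword(\pi)$ from right to left and, for each run $i$, inserts $(NE)$ factors at a position located above the largest already-placed entry smaller than the leading entry of the next block, precisely so that every row carrying a letter of the $i$th run acquires area exactly $i$ while $\dinv^-$ remains zero. Saying the surviving pairs ``should then be repackaged'' does not supply this construction, and it is the one place where the interaction with the reorganization defining $\miniword$ must be engineered, not merely checked; your proposal openly flags it as the main obstacle but leaves it unresolved. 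Two smaller gaps sit alongside it: the nonnegativity of $\dinv^-(\PF,V)$ (needed for the $q=0$ substitution to be well defined) is asserted but not proved, and the $\maj$ case is described only at the level of ``super-rows,'' whereas $\ga^{\maj}$ is built from the descent-starred permutation $\sg(\pi)$ by a right-to-left reading that decides between appending $EN$ and stacking a starred double rise. Your alternative route via a decorated zeta map plus the equidistributions $D^{\dinv}=D^{\maj}$ and $D^{\inv}=D^{\minimaj}$ is plausible in outline, but a decoration-compatible zeta map for these restricted families is itself an unconstructed ingredient of comparable difficulty.
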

They proved \tref{combo} by constructing 4 bijections of the form $\ga^{\stat}$ for $\stat=\dinv,$ $\maj,\inv$ and $\minimaj$ between ordered multiset partitions and word parking functions. We present the four bijections in Appendix A.
It is a \textbf{fact} that for any ordered multiset partition $\pi$, \emph{each bijection $\ga^{\stat}$ maps the the minimum element in the last part of $\pi$ to the car in the first row in the parking function $\ga^{\stat}(\pi)$}, mentioned in Appendix A. We are going to use the fact when we prove \tref{combor}.

On the combinatorial side, 
the second author \cite{wilson} and Rhoades \cite{Rhoades} proved the following theorem:
\begin{theorem}[Rhoades and
	Wilson]\label{theorem:equi}
	For any integers $n,k$,
	\begin{equation}
	\Rise_{n,k}(X;q,0)=\Rise_{n,k}(X;0,q)=\Val_{n,k}(X;q,0)=\Val_{n,k}(X;0,q).
	\end{equation}
\end{theorem}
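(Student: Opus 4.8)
The plan is to convert this identity of symmetric functions into a combinatorial equidistribution statement on ordered multiset partitions and then prove that. A symmetric function is determined by its coefficients $|_{M_\beta}$ as $\beta$ ranges over weak compositions, so two elements of $\La^{(n)}$ are equal if and only if all of these coefficients agree. By \tref{combo} the $M_\beta$-coefficients of $\Rise_{n,k}[X;q,0]$, $\Rise_{n,k}[X;0,q]$, $\Val_{n,k}[X;q,0]$ and $\Val_{n,k}[X;0,q]$ are, respectively, $D^{\dinv}_{\beta,k+1}(q)$, $D^{\maj}_{\beta,k+1}(q)$, $D^{\inv}_{\beta,k+1}(q)$ and $D^{\minimaj}_{\beta,k+1}(q)$. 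Hence \tref{equi} is equivalent to the assertion that
$$
D^{\inv}_{\beta,k}(q)=D^{\maj}_{\beta,k}(q)=D^{\dinv}_{\beta,k}(q)=D^{\minimaj}_{\beta,k}(q)
$$
for every weak composition $\beta$ and every $k$; that is, $\inv$, $\maj$, $\dinv$ and $\minimaj$ are equidistributed on $\OP_{\beta,k}$. I would prove this in two halves.

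For $\inv=\maj=\dinv$ I would use insertion algorithms together with induction. Build an element of $\OP_{\beta,k}$ by repeatedly inserting one occurrence of the largest letter into an ordered multiset partition on the smaller content obtained by deleting that occurrence. For each of the three statistics one describes explicitly the legal positions for the new letter and the amount by which the statistic changes at each position; the key point is to verify that, although the three insertion rules are defined differently, the generating polynomial in $q$ summed over the available positions is the same at each step and is independent of which of the three statistics is being tracked. Summing over insertions and applying the inductive hypothesis then forces $D^{\inv}_{\beta,k}=D^{\maj}_{\beta,k}=D^{\dinv}_{\beta,k}$. The work here is a somewhat delicate but conceptually routine case analysis confirming that each insertion rule produces the intended factor.

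For $\inv=\minimaj$ the insertion approach fails, because $\minimaj$ is defined through the global $\miniword$ construction, whose behaviour on a block depends on all the blocks to its right, so it does not change in a controlled way under a local insertion. Instead I would generalize Rhoades's argument for ordered set partitions \cite{Rhoades} to the multiset setting: one sets up a recursion for $D^{\minimaj}_{\beta,k}(q)$ — for instance by tracking how $\miniword(\pi)$, and hence $\minimaj(\pi)$, changes when the first block is altered or the smallest letter is removed — and shows that it coincides with a known recursion for $D^{\inv}_{\beta,k}(q)$, or alternatively one constructs a direct bijection on $\OP_{\beta,k}$ carrying $\minimaj$ to $\inv$. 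This is the main obstacle: Rhoades's technique was tailored to ordered set partitions, and adapting it requires controlling how the $\miniword$ reorganization interacts with blocks that share letters and verifying that the resulting recursion or bijection is well defined on all of $\OP_{\beta,k}$, not merely on ordered set partitions. Combining the two halves yields the chain $D^{\dinv}_{\beta,k}=D^{\maj}_{\beta,k}=D^{\inv}_{\beta,k}=D^{\minimaj}_{\beta,k}$ for all $\beta$ and $k$, which by the reduction above is exactly \tref{equi}.
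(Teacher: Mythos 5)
Your proposal is correct and follows essentially the same route as the source: the paper itself only cites this theorem (to \cite{wilson} for the insertion-based proof of $\inv=\maj=\dinv$ and to \cite{Rhoades} for the recursion-based proof of $\inv=\minimaj$), and your two-part plan — reduce to equidistribution on $\OP_{\beta,k}$ via \tref{combo}, then insertion maps for the first three statistics and a Rhoades-style recursion for minimaj — is precisely the strategy of those works, which Sections 4 and 5 of this paper replay in the extended ($r>0$) setting. No gaps beyond the acknowledged technical work of verifying the insertion factors and the minimaj recursion.
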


\subsection{Extended permutations, extended ordered set and multiset partitions}

We shall generalize the definitions of permutations, ordered set partitions and ordered multiset partitions in the way that the number 0 is allowed to be an entry.

Let $\beta = \{\beta_1,\ldots,\beta_\ell\}\vDash n$ be a weak composition and $A(\beta)=\{i^{\beta_i}:1\leq i\leq \ell\}$ be its corresponding multiset. A permutation of $A(\beta)$ is an ordering of the entries in the multiset $A(\beta)$. We let $\Sn{\beta}$ denote the set of permutations of $A(\beta)$.

Given a weak composition $\beta\vDash n$ and an integer $r\geq 0$, an \emph{extended permutation} (or a \emph{tail positive permutation}) is a permutation of the multiset $A(\beta)\cup \{0^r\}$ such that the last entry is not $0$. We let $\Sn{r;\beta}$ denote the set of extended permutations of $A(\beta)\cup \{0^r\}$. Clearly, $\Sn{0;\beta}=\Sn{\beta}$.

In a similar way, one can define extended ordered set and multiset partitions. We let $\OP_{1;n}$ denote the set of \emph{extended ordered set partitions}, which are ordered set partitions of the set $\{0\}\cup\{1,\ldots,n\}$ such that the number $0$ is not contained in the last block. Similar to the definition of $\OP_{n,k}$ and $\OP_{n,\al}$, we have $\OP_{1;n,k}$ and $\OP_{1;n,\al}$.

An \emph{extended ordered multiset partition} with content $\beta\vDash n$ with $r$ 0's is an ordered multiset partition of the set $A(\beta)\cup \{0^r\}$  such that $0$ is not contained in the last block. We let $\OP_{r;\beta}$ denote the set of all such extended ordered multiset partitions. Similarly, we have $\OP_{r;\beta,k}$ and $\OP_{r;\beta,\al}$.

The above three new combinatorial objects are defined from the same idea that they do not end with $0$, and extended ordered multiset partitions have nice combinatorial properties. It is easy to check that all the four statistics: inv, maj, dinv, minimaj are well defined on the set $\OP_{r;\beta,\al}$. Let 
$$
D_{r;\beta,k}^{\stat}(q) := \sum_{\pi\in\OP_{r;\beta,k}}q^{\stat(\pi)}
$$
where stat is one of the statistics \emph{inv, maj, dinv, minimaj}. 
We can prove the following theorem:
\begin{theorem}\label{theorem:combor}
	For any integers $n,k,r$ and weak composition $\beta$,
	\begin{eqnarray}
	\Rise_{r;n,k}[X;q,0]|_{M_\beta} &=& D_{r;\beta,k+1}^{\dinv}(q),\\
	\Rise_{r;n,k}[X;0,q]|_{M_\beta} &=& D_{r;\beta,k+1}^{\maj}(q),\\
	\Val_{r;n,k}[X;q,0]|_{M_\beta} &=& D_{r;\beta,k+1}^{\inv}(q),\\
	\Val_{r;n,k}[X;0,q]|_{M_\beta} &=& D_{r;\beta,k+1}^{\minimaj}(q).
	\end{eqnarray}
\end{theorem}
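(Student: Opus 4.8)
The plan is to reduce \tref{combor} to the already-established \tref{combo} by ``peeling off'' the $r$ zeros. The key observation is that an extended ordered multiset partition $\pi \in \OP_{r;\beta,k+1}$ is, after ignoring the zeros, an ordinary ordered multiset partition $\pi^0 \in \OP_{\beta',k+1}$ for some content $\beta'$ obtained from $\beta$ by dropping the entries that correspond to the zeros — except that we must record, for each block, whether or not it contained a zero, subject to the constraint that the last block does not. Dually, on the parking-function side, an element of $\WPFc_{r;n,n-k}^{\stat'}$ (for the appropriate decoration type $\stat'$) is a decorated parking function with $r$ of its valley rows relabeled by $0$. So the strategy is: first show that the four bijections $\ga^{\stat}$ of Appendix A can be extended to bijections between $\OP_{r;\beta,k+1}$ and the relevant sets of parking functions with blank valleys, in a way that tracks the $0$-blocks to the blank-valley rows and preserves the statistic; then check that the statistic identities $\area,\dinv,\area^-,\dinv^-$ translate correctly under this extension.

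First I would make precise how zeros interact with each of the four statistics. For $\inv$ and $\dinv$: since $0$ is the global minimum, a block $B_i$ containing $0$ has $B_i^1 = 0$, and $0$ contributes an inversion with every larger element sitting weakly to its left, and contributes primary/secondary dinvs against the smallest elements of blocks to its right; these contributions are completely determined by the positions of the $0$-blocks and the shape, so they behave ``linearly.'' For $\maj$ and $\minimaj$: a $0$ placed at the start of a decreasing block run creates a descent whose contribution to $\maj$ is read off from the index word, and here the hypothesis that the last block has no $0$ is exactly what makes $\minimaj$ (via $\miniword$) well defined, as noted in the excerpt. The technical heart is a lemma of the form: \emph{inserting a $0$ into block $B_i$ of $\pi \in \OP_{\beta,k+1}$ (with $i < k+1$) changes $\stat$ by an explicit amount depending only on $i$, $\al$, and the position of $B_i$'s minimum, and correspondingly inserting a blank valley into the $\ga^{\stat}$-image changes $\area$ (resp.\ $\dinv$) by the matching amount.} Once this insertion lemma is in hand, $\ga^{\stat}$ extends to the $r$-zero setting block by block, and the four identities follow by the same bookkeeping that gave \tref{combo}.

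The main obstacle I expect is the $\minimaj$ case. The construction of $\miniword(\pi)$ processes the blocks from right to left, and the position of each element within its block depends on the \emph{first} element of the next block; inserting a $0$ into $B_i$ changes what ``strictly bigger than the first number of $B_{i+1}$'' means for block $B_{i-1}$ only if the $0$ becomes the first element of the reorganized $B_i$, which — since $0$ is minimal — it never does unless $B_i = \{0\}$ is a singleton. So one has to argue carefully that a $0$ in $B_i$ always lands at the front of the increasing reorganization of $B_i$ (because everything else in $B_i$ is positive), hence never affects the downstream reorganization, and then track exactly which descent(s) in $\miniword$ it creates and their index-word values. Getting this right, together with verifying that the corresponding move on parking functions (relabeling a valley row with $0$) changes $\dinv^-$ by precisely $-(d_i+1)$ shifted appropriately, is where the real work lies. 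The other three bijections should be comparatively routine, since $\inv$, $\maj$, $\dinv$ all interact with the minimum element of a block in a more transparent, position-local way. Finally, I would invoke the \textbf{fact} recalled in the excerpt — that each $\ga^{\stat}$ sends the minimum of the last block to the first-row car — to guarantee that the ``last block has no $0$'' constraint matches the ``$0$ does not appear in the first row'' constraint on extended parking functions, closing the bijection.
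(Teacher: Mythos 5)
Your overall architecture matches the paper's: extend the four bijections $\ga^{\stat}$ to handle the letter $0$, and use the fact that $\ga^{\stat}$ sends the minimum of the last block to the first-row car in order to match the constraint ``no $0$ in the last block'' with ``no $0$ in the first row.'' That second ingredient you identify correctly, and it is indeed the only genuinely new content of the theorem.

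However, the route you propose for the first ingredient — peeling off the $r$ zeros to get an ordinary $\pi^0\in\OP_{\beta',k+1}$, then proving an insertion lemma describing how putting a $0$ back into block $B_i$ changes each of $\inv,\maj,\dinv,\minimaj$, with a matching blank-valley insertion on the parking-function side — is both unnecessary and, as you yourself note, left unexecuted at its hardest point (the $\minimaj$ case). The paper avoids all of this with one observation: the set $\OPa_{r;\beta,k+1}$ of partitions with zeros allowed anywhere is identical to $\OP_{(r,\beta_1,\ldots,\beta_m),k+1}$ after adding $1$ to every letter, and the bijections $\ga^{\stat}$ of \tref{combo} are already defined for arbitrary weak compositions as content. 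So they apply verbatim with content $(r,\be_1,\ldots,\be_m)$, carrying all four statistics to the corresponding parking-function statistics with no new computation; a $0$ is just the smallest letter, not a special symbol requiring its own contribution analysis. One then restricts both sides using the minimum-of-last-block fact. I would recommend replacing your insertion-lemma plan with this relabeling observation: it closes the gap you flagged in the $\minimaj$ case (where tracking how a $0$ interacts with $\miniword$ and with $\dinv^-$ of the image really would require care) and reduces the whole proof to the restriction argument you already have. If you do insist on the insertion route, you must actually prove the insertion lemma for all four statistics simultaneously with the matching parking-function moves — and for $\minimaj$ that is essentially redoing the hardest of the four bijections from scratch.
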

\begin{proof}
	Similar to the definition of $\OP_{r;\be}$, we shall let $\OPa_{r;\be}$ denote the set of ordered multiset partitions of the set $A(\be)\cup \{0^r\}$, but there is no restriction of the placement of 0 (i.e.\ 0 is allowed to be in the last block). Similarly, we have $\OPa_{r;\beta,k}$ and $\OPa_{r;\beta,\al}$.

	Haglund et al.\ proved \tref{combo} by constructing 4 bijections $\ga^\dinv, \ga^\maj, \ga^\inv, \ga^\minimaj$ between ordered multiset partitions and decorated word parking functions:
	\begin{eqnarray*}
		\ga^\dinv:\OP_{\beta,k+1} &\rightarrow& \{(\PF,R)\in\WPFc_{n,n-k-1}^{\Rise},\ 
		X^{\PF} = \prod_{i=1}^{\ell(\be)}x_i^{\be_i},\ \area^-(\PF,R)=0\},
		\\
		\ga^\maj:\OP_{\beta,k+1} &\rightarrow& \{(\PF,R)\in\WPFc_{n,n-k-1}^{\Rise},\ 
		X^{\PF} = \prod_{i=1}^{\ell(\be)}x_i^{\be_i},\ \dinv(\PF)=0\},
		\\
		\ga^\inv:\OP_{\beta,k+1} &\rightarrow& \{(\PF,V)\in\WPFc_{n,n-k-1}^{\Val},\ 
		X^{\PF} = \prod_{i=1}^{\ell(\be)}x_i^{\be_i},\ \area(\PF)=0\},
		\\
		\ga^\minimaj:\OP_{\beta,k+1} &\rightarrow& \{(\PF,V)\in\WPFc_{n,n-k-1}^{\Val},\ 
		X^{\PF} = \prod_{i=1}^{\ell(\be)}x_i^{\be_i},\ \dinv^-(\PF,V)=0\}.
	\end{eqnarray*}
	The details can be found in Appendix A. If we allow 0 as an element of an ordered multiset partition, then the four maps can be naturally generalized to the set $\OPa_{r;\beta,k}$, and the range of the maps are parking functions that allow 0 as a car, i.e.\ if we let $\WPFc_{r;n,k}^{\Rise+}$ and $\WPFc_{r;n,k}^{\Val+}$ be the set of rise and valley decorated word parking function with $r$ 0's (car 0 is allowed in the first row), then
	we have bijections
	\begin{eqnarray*}
		\ga^\dinv:\OPa_{r;\beta,k+1} &\rightarrow& \{(\PF,R)\in\WPFc_{r;n,n-k-1}^{\Rise+},\ 
		X^{\PF} = \prod_{i=1}^{\ell(\be)}x_i^{\be_i},\ \area^-(\PF,R)=0\},
		\\
		\ga^\maj:\OPa_{r;\beta,k+1} &\rightarrow& \{(\PF,R)\in\WPFc_{r;n,n-k-1}^{\Rise+},\ 
		X^{\PF} = \prod_{i=1}^{\ell(\be)}x_i^{\be_i},\ \dinv(\PF)=0\},
		\\
		\ga^\inv:\OPa_{r;\beta,k+1} &\rightarrow& \{(\PF,V)\in\WPFc_{r;n,n-k-1}^{\Val+},\ 
		X^{\PF} = \prod_{i=1}^{\ell(\be)}x_i^{\be_i},\ \area(\PF)=0\},
		\\
		\ga^\minimaj:\OPa_{r;\beta,k+1} &\rightarrow& \{(\PF,V)\in\WPFc_{r;n,n-k-1}^{\Val+},\ 
		X^{\PF} = \prod_{i=1}^{\ell(\be)}x_i^{\be_i},\ \dinv^-(\PF,V)=0\}.
	\end{eqnarray*}
	
	We have mentioned the fact below \tref{combo} and in Appendix A that each bijection $\ga^{\stat}$ maps the minimum element in the last part of $\pi$ into the car in the first row of $\ga^{\stat}(\pi)$. 
	Since the set $\OP_{r;\beta,k}$ contains ordered multiset partitions in $\OPa_{r;\beta,k}$ that 0 is not contained in the last block, the restriction of the maps $\ga^{\stat}$ on the set $\OP_{r;\beta,k}\subseteq\OPa_{r;\beta,k}$ is a bijection between $\OP_{r;\beta,k}$ and the corresponding set of parking functions with $r$ 0's but 0 is not allowed in the first row, which exactly matches the set $\WPFc_{r;n,n-k-1}^{\Rise}$ or $\WPFc_{r;n,n-k-1}^{\Val}$, and the restriction of the maps $\ga^{\stat}$ on $\OP_{r;\beta,k}\subseteq\OPa_{r;\beta,k}$ are bijections:
	\begin{eqnarray*}
		\ga^\dinv:\OP_{r;\beta,k+1} &\rightarrow& \{(\PF,R)\in\WPFc_{r;n,n-k-1}^{\Rise},\ 
		X^{\PF} = \prod_{i=1}^{\ell(\be)}x_i^{\be_i},\ \area^-(\PF,R)=0\},
		\\
		\ga^\maj:\OP_{r;\beta,k+1} &\rightarrow& \{(\PF,R)\in\WPFc_{r;n,n-k-1}^{\Rise},\ 
		X^{\PF} = \prod_{i=1}^{\ell(\be)}x_i^{\be_i},\ \dinv(\PF)=0\},
		\\
		\ga^\inv:\OP_{r;\beta,k+1} &\rightarrow& \{(\PF,V)\in\WPFc_{r;n,n-k-1}^{\Val},\ 
		X^{\PF} = \prod_{i=1}^{\ell(\be)}x_i^{\be_i},\ \area(\PF)=0\},
		\\
		\ga^\minimaj:\OP_{r;\beta,k+1} &\rightarrow& \{(\PF,V)\in\WPFc_{r;n,n-k-1}^{\Val},\ 
		X^{\PF} = \prod_{i=1}^{\ell(\be)}x_i^{\be_i},\ \dinv^-(\PF,V)=0\}.
	\end{eqnarray*}
	\tref{combor} follows from the fact that $\ga^{\stat}$ maps the statistic $\stat$ into parking function statistics $\dinv,\area^-,\dinv^-,\area$.
\end{proof}

Thus, the combinatorial sides of the conjectures about the expression $\Delta'_{e_k} \Delta_{h_r}e_n$ at the case when $q$ or $t$ equals $0$ become generating functions about extended ordered multiset partitions. We shall show in the following two sections that the statistics inv, maj, dinv, minimaj are equi-distributed on $\OP_{r;\beta,k}$.

\section{The identity $D_{r;\beta,k}^{\dinv}(q)
	=D_{r;\beta,k}^{\maj}(q)
	=D_{r;\beta,k}^{\inv}(q)$}

Recall that we let $\OPa_{r;\be}$ denote the set of ordered multiset partitions of the set $A(\be)\cup \{0^r\}$ and 0 is allowed to be in the last block. We also have $\OPa_{r;\beta,k}$ and $\OPa_{r;\beta,\al}$.

In fact, $\OPa_{r;\beta,k}$ only enlarges the alphabet of $\OP_{\beta,k}$ from $\Z_+$ to $\Z_{\geq 0}$, and it will inherit all the properties of $\OP_{\beta,k}$. For a composition $\be=(\be_1,\ldots,\be_n)$ and integers $r,k$, we let $$
D_{r;\beta,k}^{\stat +}(q) := \sum_{\pi\in\OPa_{r;\beta,k}}q^{\stat(\pi)}
$$
where stat is one of the statistics \emph{inv, maj, dinv, minimaj},
then clearly
\begin{equation*}
D_{r;(\be_1,\ldots,\be_n),k}^{\stat +}(q) =D_{(r,\be_1,\ldots,\be_n),k}^{\stat}(q),
\end{equation*}
since we can add 1 to all the entries of an ordered multiset partition in $\OPa_{r;(\be_1,\ldots,\be_n),k}$ to get an ordered multiset partition in $\OP_{(r,\be_1,\ldots,\be_n),k}$.
It follows from \tref{equi} that,
\begin{corollary}\label{corollary:Dplus}
	For any integers $n,r$ and composition $\be$,
	$$
	D_{r;\be,k}^{\inv +}(q) = D_{r;\be,k}^{\maj +}(q) = 
	D_{r;\be,k}^{\dinv +}(q) = D_{r;\be,k}^{\minimaj +}(q).
	$$
\end{corollary}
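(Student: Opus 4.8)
The plan is to transport the identity, via an order-preserving relabelling, to the corresponding statement for \emph{ordinary} ordered multiset partitions, where all four statistics are already known to be equidistributed.

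First I would formalise the relabelling map $\phi$ that adds $1$ to every entry of an element of $\OPa_{r;\beta,k}$. An extended object in $\OPa_{r;(\beta_1,\dots,\beta_n),k}$ is an ordered multiset partition of $A(\beta)\cup\{0^r\}$ into $k$ blocks with no constraint on where the $0$'s sit; after shifting, $0$ becomes the letter $1$ with multiplicity $r$, which in an ordinary ordered multiset partition is an unremarkable smallest letter, so $\phi$ is a bijection from $\OPa_{r;(\beta_1,\dots,\beta_n),k}$ onto $\OP_{(r,\beta_1,\dots,\beta_n),k}$ that preserves the number of blocks. I would then check that $\phi$ fixes each of $\inv,\maj,\dinv,\minimaj$; this is the only genuine verification, and it is short, because each statistic is defined solely through block sizes, through the relative order of the letters, and through the identity of the minimum of each block — all invariant under a uniform shift of the alphabet (the word $\sg(\pi)$, the index word $\ind(\pi)$, the primary and secondary dinv triples, the inversion pairs, and $\miniword(\pi)$ all commute with $\phi$ in the obvious way). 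This gives $D_{r;\beta,k}^{\stat+}(q)=D_{(r,\beta_1,\dots,\beta_n),k}^{\stat}(q)$ for each of the four statistics, i.e.\ exactly the identity displayed just before the corollary.

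Finally I would combine the two cited theorems. Write $m=r+\beta_1+\cdots+\beta_n$. Applying \tref{combo} with $\beta$ replaced by $(r,\beta_1,\dots,\beta_n)$, and the role of ``$k$'' there taken to be $k-1$, identifies $D_{(r,\beta),k}^{\dinv}$, $D_{(r,\beta),k}^{\maj}$, $D_{(r,\beta),k}^{\inv}$, $D_{(r,\beta),k}^{\minimaj}$ with the $M_{(r,\beta_1,\dots,\beta_n)}$-coefficients of $\Rise_{m,k-1}[X;q,0]$, $\Rise_{m,k-1}[X;0,q]$, $\Val_{m,k-1}[X;q,0]$, $\Val_{m,k-1}[X;0,q]$ respectively. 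By \tref{equi} these four symmetric functions coincide, hence so do the four coefficients; pulling this back through $\phi$ gives $D_{r;\be,k}^{\inv +}(q)=D_{r;\be,k}^{\maj +}(q)=D_{r;\be,k}^{\dinv +}(q)=D_{r;\be,k}^{\minimaj +}(q)$.

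I expect the only point requiring care is bookkeeping rather than a real obstacle: matching the block-count parameter (\tref{combo} is stated with $k+1$ parts) and confirming that the ``$0$ forbidden in the last block'' dichotomy between $\OP_{r;\beta,\cdot}$ and $\OPa_{r;\beta,\cdot}$ is precisely what evaporates upon passing to $\OP_{(r,\beta),\cdot}$. This is already anticipated in the proof of \tref{combor}, so no new difficulty arises; the substance of the corollary is entirely inherited from \tref{equi}.
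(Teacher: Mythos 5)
Your proposal is correct and follows essentially the same route as the paper: the uniform ``add $1$'' shift identifying $\OPa_{r;\beta,k}$ with $\OP_{(r,\beta_1,\ldots,\beta_n),k}$ while preserving all four statistics, followed by an appeal to \tref{equi} (translated through \tref{combo}) on the ordinary ordered multiset partitions. Your extra care with the $k$ versus $k+1$ bookkeeping and the explicit verification that each statistic commutes with the shift only makes explicit what the paper leaves implicit.
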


For a composition $\be=(\be_1,\ldots,\be_n)$, we let $\be^-=(\be_1,\ldots,\be_{n-1})$ be the composition obtained by removing the last part of $\be$. We also let $[0,\ell]$ be the set $\{0,1,\ldots,\ell\}$. 
Further, for a set $S$, we let $\binom{S}{k}$ be the set of size-$k$ subsets of $S$, and 
$\multiset{S}{k}$
be the set of size-$k$ multisets with elements in $S$.

In order to prove the result about ordered multiset partition that $
D_{\be,k}^{\inv}(q) = D_{\be,k}^{\maj}(q) = 
D_{\be,k}^{\dinv}(q)
$,
the second author in \cite{wilson} constructed 3 \emph{insertion} maps:
$$
\phi^{\stat}_{\be,k,\ell}: \OP_{\be^-,\ell} \times \binom{[0,\ell-1]}{\be_n-k+\ell} \times \multiset{[0,\ell]}{k-\ell} \rightarrow \OP_{\be,k},
$$
where stat is one of the statistics \emph{inv, maj, dinv}, and he proved that
$$
\stat\left(\phi^{\stat}_{\be,k,\ell}(\pi,U,B)\right) = \stat(\pi)+\sum_{u\in U} u+\sum_{b\in B} b
$$
for all the three statistics. In this section, we shall generalize the insertion maps in \cite{wilson} to extended ordered multiset partitions to prove the identity that
$$D_{r;\beta,k}^{\dinv}(q)
=D_{r;\beta,k}^{\maj}(q)
=D_{r;\beta,k}^{\inv}(q).$$
This identity is also proved by D'Adderio, Iraci and Wyngaerd in \cite{Michele} independently.

\subsection{The insertion map for inv}

We shall generalize the map $\phi^{\inv}_{\be,k,\ell}$ in \cite{wilson} to the extended case as
\begin{align*}
\phi^{\inv}_{r;\be,k,\ell}&: 
\OP_{r;\be^-,\ell} \times \binom{[0,\ell-1]}{\be_n-k+\ell} \times \multiset{[0,\ell]}{k-\ell} \\
&+ \left(\OPa_{r;\be^-,\ell} - \OP_{r;\be^-,\ell}\right) \times \binom{[0,\ell-1]}{\be_n-k+\ell} \times \multiset{[0,\ell]}{k-\ell-1} 
\rightarrow \OP_{r;\be,k}
\end{align*}
such that
\begin{equation}\label{inv}
\inv\left(\phi^{\inv}_{r;\be,k,\ell}(\pi,U,B)\right) = \inv(\pi)+\sum_{u\in U} u+\sum_{b\in B} b.
\end{equation}

Given $\be=(\be_1,\ldots,\be_n)$ and $\pi\in\OP_{r;\be^-,\ell}$, we label each block plus the space to the left of $\pi$ from right to left with numbers $0,1,\ldots,\ell$. Then for any $U\in \binom{[0,\ell-1]}{\be_n-k+\ell}$ and $B\in \multiset{[0,\ell]}{k-\ell}$, we construct $\phi^{\inv}_{r;\be,k,\ell}(\pi,U,B)$ as follows.

We repeatedly remove the largest number $i$ from the multiset $U\cup B$, taking from $U$ first if the largest numbers are equal. If $i\in U$, then we place an $n$ to the block with label $i$; if $i\in B$, then we add a new block of a singleton $n$ to the right of the block with label $i$. This process constructs all the ordered multiset partitions in $\OP_{r;\be,k}$ such that 
the last block which is not a singleton $n$ does not contain a 0.

In order to construct the remaining ordered partitions in $\OP_{r;\be,k}$, those whose 
last block which is not a singleton $n$ does not contain a 0,
we take ordered multiset partitions $\pi$ in the set $\left(\OPa_{r;\be^-,\ell} - \OP_{r;\be^-,\ell}\right)$ 
(which means the last block of $\pi$ contains 0). Then for any $U\in \binom{[0,\ell-1]}{\be_n-k+\ell}$ and $B'\in \multiset{[0,\ell]}{k-\ell-1}$, we set the multiset 
$B=B'\cup \{0\}$, and we construct $\phi^{\inv}_{r;\be,k,\ell}(\pi,U,B)$ by repeatedly inserting numbers in the multiset $U\cup B$ in the same way. The 0 in $B$ ensures that the result will no longer have any 0's in its rightmost block. One can check easily that this gives all the ordered multiset partitions in $\OP_{r;\be,k}$, and the inv statistic increases by $i$ each time we insert an $i$, thus Equation (\ref{inv}) follows.

For example, suppose that $r=2$, $\beta = (2,3,2,4)$, and $k = 5$, $\ell=3$, and 
\begin{align*}
\pi &= 23/0123/012 \in\left(\OPa_{r;\be^-,\ell} - \OP_{r;\be^-,\ell}\right), \\
U &= \{0, 2\} \in \binom{[0,\ell-1]}{\be_n-k+\ell}, \\
B' &= \{3\} \in \multiset{[0,\ell]}{k-\ell-1}.
\end{align*}
Block 012 is labeled 0, block 0123 is labeled 1, and block 23 is labeled 2. The space to the far left receives label 3. First we take $i=3$ from $B$ and we insert a new singleton 4 block at the far left, yielding $4/23/0123/012$. Next we take $i=2$ from $U$, so we insert a 4 into the the 23 block and get $4/234/0123/012$. Then we take $i=0$ from $U$ and obtain $4/234/123/0124$. Finally, since $B = B' \cup \{0\}$, we insert a singleton 4 block at the far right to get $4/234/123/0124/4$.

\subsection{The insertion map for maj}

In order to define the map $\phi^{\maj}_{r;\be,k,\ell}$, we shall introduce the \emph{descent-starred permutation notation} of an ordered partition. For any ordered partition $\pi = B_1/\cdots/B_n$, we write the numbers of each block in decreasing order, remove the slashes and add stars at the descent positions that are entirely contained in some block of $\pi$. This permutation with stars is called  the \emph{descent-starred permutation notation} of $\pi$. 

The set of positions with stars is denoted by $S(\pi)$, and the permutation is denoted by $\sg(\pi)$ introduced in Section 3.1. For example, if $\pi = 134/47/23$, then $\sg(\pi)=4317432$, $S(\pi)=\{1,2,4,6\}$ and $4_\ast 3_\ast 17_\ast 43_\ast 2$ is the corresponding descent-starred permutation.

The map
\begin{align*}
\phi^{\maj}_{r;\be,k,\ell}&: 
\OP_{r;\be^-,\ell} \times \binom{[0,\ell-1]}{\be_n-k+\ell} \times \multiset{[0,\ell]}{k-\ell} \\
&+ \left(\OPa_{r;\be^-,\ell} - \OP_{r;\be^-,\ell}\right) \times \binom{[0,\ell-1]}{\be_n-k+\ell} \times \multiset{[0,\ell]}{k-\ell-1} 
\rightarrow \OP_{r;\be,k}
\end{align*}
is defined as follows. 

Given $\be=(\be_1,\ldots,\be_n)$ and $\pi\in\OP_{r;\be^-,\ell}$, we write $\pi$ in descent-starred notation and let $\sg=\sg(\pi)$. 
With labels $0,\ldots,\ell$, we first label the rightmost position, then the unstarred descent positions of $\pi$  from right to left, then the unstarred non-descent positions (including the leftmost position) from left to right.

For any $U\in \binom{[0,\ell-1]}{\be_n-k+\ell}$ and $B\in \multiset{[0,\ell]}{k-\ell}$, we construct $\phi^{\maj}_{r;\be,k,\ell}(\pi,U,B)$ by setting $U^+ = \{u+1:u\in U\}$, then repeatedly remove the largest number $i$ from the multiset $U^+ \cup B$, taking from $B$ first if the largest numbers are equal. The algorithm of inserting $i$ is as follows:
\begin{enumerate}
	\item Insert the number $n$ at the position with label $i$.
	\item Move each star that appears to the right of the new $n$ one descent to the left.
	\item If $i\in U^+$, then star the rightmost descent.
	\item Relabel the starred permutation as before, stopping at $i$ if $i\in B$ and $i-1$ if $i\in U^+$.
\end{enumerate}
This process constructs all the ordered multiset partitions in $\OP_{r;\be,k}$ such that 
the last block which is not a singleton $n$ does not contain a 0.

In order to construct the remaining ordered partitions in $\OP_{r;\be,k}$, we take ordered multiset partitions $\pi$ in the set $\left(\OPa_{r;\be^-,\ell} - \OP_{r;\be^-,\ell}\right)$ such that the last block contains 0. Then for any $U\in \binom{[0,\ell-1]}{\be_n-k+\ell}$ and $B'\in \multiset{[0,\ell]}{k-\ell-1}$, we set $U^+ = \{u+1:u\in U\}$ and 
$B=B'\cup \{0\}$, and we construct $\phi^{\maj}_{r;\be,k,\ell}(\pi,U,B)$ by repeatedly inserting numbers in the multiset $U^+\cup B$ in the same way. One can check easily that this gives all the ordered multiset partitions in $\OP_{r;\be,k}$. The second author \cite{wilson} gave a proof that the maj statistic increases by $i$ each time we insert an $i$ in the non-extended case, which works naturally for the extended case, thus we have
\begin{equation}\label{maj}
\maj\left(\phi^{\maj}_{r;\be,k,\ell}(\pi,U,B)\right) = \maj(\pi)+\sum_{u\in U} u+\sum_{b\in B} b.
\end{equation}


Consider again the example $r=2$, $\beta = (2,3,2,4)$, and $k = 5$, $\ell=3$, and 
\begin{align*}
\pi &= 23/0123/012 \in\left(\OPa_{r;\be^-,\ell} - \OP_{r;\be^-,\ell}\right) ,\\
U &= \{0, 2\} \in \binom{[0,\ell-1]}{\be_n-k+\ell}, \\
B' &= \{3\} \in \multiset{[0,\ell]}{k-\ell-1}.
\end{align*}
As a descent-starred permutation, we write $\pi$ as $\sg(\pi) = 3_{\ast}23_{\ast}2_{\ast}1_{\ast}02_{\ast}1_{\ast}0$. The labeling of $\sg(\pi)$ is 
\begin{align*}
_1 3_{\ast}2_2 3_{\ast}2_{\ast}1_{\ast}0_3 2_{\ast}1_{\ast}0_0
\end{align*}
We take a 3 from $B$ and, after inserting a 4 at position 3 and shifting stars to the left, we get
\begin{align*}
3_{\ast}23_{\ast}2_{\ast}1_{\ast}04_{\ast}2_{\ast}10
\end{align*}
increasing the major index by 3. We relabel and continue with a 3 from $U^{+}$, obtaining
\begin{align*}
3_{\ast}2 4_{\ast} 3_{\ast}2_{\ast}1_{\ast}04_{\ast}21_{\ast}0
\end{align*}
adding a new star after the last descent since this 3 
comes
from $U^{+}$. We take the 1 from $U^{+}$ and get 
\begin{align*}
3_{\ast}2 4_{\ast} 3_{\ast}2_{\ast}1_{\ast}04_{\ast}24_{\ast}1_{\ast}0 .
\end{align*}
Finally, we take the 0 we added to $B$ to obtain 
\begin{align*}
3_{\ast}2 4_{\ast} 3_{\ast}2_{\ast}1_{\ast}04_{\ast}24_{\ast}1_{\ast}04 =   23 / 01234 / 24 /  014 / 4 \in \OP_{r;\be,k} .
\end{align*}

\subsection{The insertion map for dinv}
We define a map
\begin{multline*}
\phi^{\dinv}_{r;\be,k,\ell}: 
\OP_{r;\be^-,\ell} \times \binom{[0,\ell-1]}{\be_n-k+\ell} \times \multiset{[0,\ell]}{k-\ell} \\
+ \left(\OPa_{r;\be^-,\ell} - \OP_{r;\be^-,\ell}\right) \times \binom{[0,\ell-1]}{\be_n-k+\ell} \times \multiset{[0,\ell]}{k-\ell-1} 
\rightarrow \OP_{r;\be,k}.
\end{multline*}

Given $\be=(\be_1,\ldots,\be_n)$ and $\pi\in\OP_{r;\be^-,\ell}$, we label the $\ell+1$ spaces (the spaces between parts as well as the spaces in the two ends) of $\pi$ from right to left with numbers $0,1,\ldots,\ell$ which we call the \emph{gap labels}. Next, we label the blocks from  highest to  lowest length (from left to right for each length) with numbers $0,1,\ldots,\ell-1$ which we call the \emph{block labels}.

For any $U\in \binom{[0,\ell-1]}{\be_n-k+\ell}$ and $B\in \multiset{[0,\ell]}{k-\ell}$, we can construct $\phi^{\dinv}_{r;\be,k,\ell}(\pi,U,B)$ by inserting an $n$ into each block whose label is in $U$ and inserting a singleton block $\{n\}$ at the gap $b$ for each $b\in B$. This process constructs all the ordered multiset partitions in $\OP_{r;\be,k}$ such that
the last block which is not a singleton $n$ does not contain a 0.

In order to construct the remaining ordered partitions in $\OP_{r;\be,k}$, we take ordered multiset partitions $\pi$ in $\left(\OPa_{r;\be^-,\ell} - \OP_{r;\be^-,\ell}\right)$. Then for any $U\in \binom{[0,\ell-1]}{\be_n-k+\ell}$ and $B'\in \multiset{[0,\ell]}{k-\ell-1}$, we set the multiset $B=B'\cup \{0\}$, and we construct $\phi^{\dinv}_{r;\be,k,\ell}(\pi,U,B)$ in the same way. One can check easily that this gives all the ordered multiset partitions in $\OP_{r;\be,k}$, and the dinv statistic increases by $i$ each time we insert an $i$, thus we have
\begin{equation}\label{dinv}
\dinv\left(\phi^{\dinv}_{r;\be,k,\ell}(\pi,U,B)\right) = \dinv(\pi)+\sum_{u\in U} u+\sum_{b\in B} b.
\end{equation}

Consider once more the example $r=2$, $\beta = (2,3,2,4)$, and $k = 5$, $\ell=3$, and 
\begin{align*}
\pi &= 23/0123/012 \in\left(\OPa_{r;\be^-,\ell} - \OP_{r;\be^-,\ell}\right), \\
U &= \{0, 2\} \in \binom{[0,\ell-1]}{\be_n-k+\ell} ,\\
B' &= \{3\} \in \multiset{[0,\ell]}{k-\ell-1}.
\end{align*}
We take a 3 from $B$ and insert a singleton 4 to the far left, obtaining $4 / 23 / 0123/ 012$. Then we take a 2 from $U$ and add a 4 to the 23 block to get $4 / 234 / 0123 / 012$. We take a 0 from $U$ and add a 4 to the 0123 block to get $4 /234 / 01234 / 012$. Finally, we take the 0 we added to $B$ and add a 4 to the far right to obtain $4 /234 / 01234 / 012 / 4 \in \OP_{r; \be, k}$.

According to the definitions of maps $\phi^{\inv}_{r;\be,k,\ell}$, $\phi^{\maj}_{r;\be,k,\ell}$, $\phi^{\dinv}_{r;\be,k,\ell}$ and Equations (\ref{inv}), (\ref{maj}) and (\ref{dinv}), one can conclude the following.
\begin{theorem}\label{theorem:eq1}
	For any integers $n,r$ and composition $\be$,
	$$
	D_{r;\be,k}^{\inv}(q) = D_{r;\be,k}^{\maj}(q) = 
	D_{r;\be,k}^{\dinv}(q).
	$$
\end{theorem}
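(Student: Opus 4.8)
The plan is to establish the three claimed equalities by exhibiting, for each statistic $\stat\in\{\inv,\maj,\dinv\}$, the weight-preserving bijection described by the insertion map $\phi^{\stat}_{r;\be,k,\ell}$ and then summing over the recursive parameter $\ell$. First I would observe that the source side of each $\phi^{\stat}_{r;\be,k,\ell}$ is literally the same set for all three statistics, namely the disjoint union
\[
\OP_{r;\be^-,\ell} \times \binom{[0,\ell-1]}{\be_n-k+\ell} \times \multiset{[0,\ell]}{k-\ell}
\;\sqcup\;
\left(\OPa_{r;\be^-,\ell} - \OP_{r;\be^-,\ell}\right) \times \binom{[0,\ell-1]}{\be_n-k+\ell} \times \multiset{[0,\ell]}{k-\ell-1},
\]
so the generating function over this source set, with $q$ tracking $\stat(\pi)+\sum_{u\in U}u+\sum_{b\in B}b$, is visibly independent of which of the three statistics $\stat$ labels the $\pi$-component — it factors as a product of the $\pi$-generating function $D_{r;\be^-,\ell}^{\stat}(q)$ (or $D_{r;\be^-,\ell}^{\stat+}(q)-D_{r;\be^-,\ell}^{\stat}(q)$ for the second summand) times explicit $q$-binomial/$q$-multinomial factors. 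Then I would apply the three equations \eqref{inv}, \eqref{maj}, \eqref{dinv}, which say precisely that each $\phi^{\stat}_{r;\be,k,\ell}$ transports that source weight to $q^{\stat}$ on the target $\OP_{r;\be,k}$.

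The key point making the argument close is surjectivity and injectivity of each $\phi^{\stat}_{r;\be,k,\ell}$ restricted appropriately, together with the fact that as $\ell$ ranges over its allowed values the target sets $\OP_{r;\be,k}$ are covered without overlap. Concretely, an ordered multiset partition $\tau\in\OP_{r;\be,k}$ is reconstructed from its image by locating which blocks are singletons $\{n\}$ versus blocks containing an inserted $n$, and reading off whether the rightmost non-singleton-$n$ block contains a $0$; this determines whether $\tau$ came from the first summand (with $\pi\in\OP_{r;\be^-,\ell}$) or the second (with $\pi$ in the complement and a forced $0\in B$), and it pins down $\ell$ as the number of blocks of $\pi$. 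This is exactly the structure already verified in \cite{wilson} for the non-extended maps; the extended case only adds the bookkeeping of the element $0$, handled uniformly by the $B=B'\cup\{0\}$ trick in all three constructions. So I would state: each $\phi^{\stat}_{r;\be,k,\ell}$ is a bijection onto the subset of $\OP_{r;\be,k}$ whose underlying $\pi$-partition has exactly $\ell$ blocks, and summing the weight identity over $\ell$ gives
\[
D_{r;\be,k}^{\stat}(q) \;=\; \sum_{\ell} \left( q\text{-factor depending only on }\be,k,\ell \right)\cdot\bigl( D_{r;\be^-,\ell}^{\stat}(q) + \text{complement term}\bigr),
\]
with the right-hand side manifestly the same for $\stat=\inv,\maj,\dinv$.

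From there the theorem follows by induction on $\ell(\be)$: the base case $\ell(\be)=0$ (or $\ell(\be)=1$) is trivial since the three statistics agree on the single ordered multiset partition (or a small explicit set), and the recursion expresses $D_{r;\be,k}^{\stat}(q)$ in terms of $D_{r;\be^-,\ell}^{\stat}(q)$ and $D_{r;\be^-,\ell}^{\stat+}(q)$, both of which are statistic-independent by the inductive hypothesis and by \coref{Dplus} respectively. I expect the main obstacle to be verifying cleanly that the three maps $\phi^{\inv}, \phi^{\maj}, \phi^{\dinv}$ really do partition $\OP_{r;\be,k}$ the same way by the value of $\ell$, and in particular that the ``last non-$\{n\}$ block contains a $0$'' dichotomy interacts correctly with the $\maj$ construction, where stars migrate and the labelling is by descents rather than by blocks — but since \eqref{maj} is asserted to follow from the non-extended proof in \cite{wilson} applied verbatim, the remaining work is just to note that the extended alphabet does not disturb the combinatorial bijection, only its weight accounting, which the equations already record.
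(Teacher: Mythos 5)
Your proposal is correct and follows essentially the same route as the paper: the paper deduces the theorem from the three insertion maps $\phi^{\inv}_{r;\be,k,\ell}$, $\phi^{\maj}_{r;\be,k,\ell}$, $\phi^{\dinv}_{r;\be,k,\ell}$ sharing a common domain and the weight identities (\ref{inv}), (\ref{maj}), (\ref{dinv}), which yield the common recursion in terms of $D_{r;\be^-,\ell}$ and $D^+_{r;\be^-,\ell}$ (the latter handled by \coref{Dplus}) recorded in Section 6. You have merely made explicit the induction on $\ell(\be)$ and the bijectivity/partition-by-$\ell$ bookkeeping that the paper leaves to the reader.
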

\noindent
We mention the common recursion shared by these polynomials in Section 6.

\section{The identity $D_{r;\beta,k}^{\inv}(q)
	=D_{r;\beta,k}^{\minimaj}(q)$}

The goal of this section is to generalize the $(\inv, \minimaj)$ equi-distribution theorem of Rhoades \cite{Rhoades} from the set $\OP_{\be,k}$ to the set $\OP_{r;\be,k}$. 
For our convenience, we shall abbreviate $D^{\inv}$ and $D^{\minimaj}$ to $I$ and $M$, i.e. we  shall use the notations
\begin{eqnarray*}
	&&I_{\be,k}(q) = D_{\beta,k}^{\inv}(q),\quad   
	I_{\be,\al}(q) = D_{\beta,\al}^{\inv}(q), \quad
	I_{r;\be,k}(q) = D_{r;\beta,k}^{\inv}(q),  \quad 
	I_{r;\be,\al}(q) = D_{r;\beta,\al}^{\inv}(q), \\
	&&M_{\be,k}(q) = D_{\beta,k}^{\minimaj}(q),   \quad
	M_{\be,\al}(q) = D_{\beta,\al}^{\minimaj}(q),\\
	&&M_{r;\be,k}(q) = D_{r;\beta,k}^{\minimaj}(q),   \quad
	M_{r;\be,\al}(q) = D_{r;\beta,\al}^{\minimaj}(q).
\end{eqnarray*}
Further, we let
\begin{eqnarray*}
	&&\Ia_{r;\be,k}(q) = D_{r;\beta,k}^{\inv+}(q),   \quad
	\Ia_{r;\be,\al}(q) = D_{r;\beta,\al}^{\inv+}(q) \quad \\
	&& \Ma_{r;\be,k}(q) = D_{r;\beta,k}^{\minimaj+}(q), \quad \mbox{and}\quad
	\Ma_{r;\be,\al}(q) = D_{r;\beta,\al}^{\minimaj+}(q) 
\end{eqnarray*}
denote the generating functions that allow 0 in the last block.

\subsection{The recursion for inv}
For any integer $m$ and set $S\subseteq [m]$, we let $\chi_S = (\chi_S(1),\ldots,\chi_S(m))$ be the sequence such that $\chi_S(i)=\chi(i\in S)$
where $\chi$ of a statement is 1 if the statement is true, 0 if false.
For two sequences $\ga_1$ and $\ga_2$ of the same length, we write $\ga_1\leq\ga_2$ if each entry of $\ga_1$ is less than or equal to the corresponding entry of $\ga_2$.

Given an integer $n$,  a weak composition $\be=(\be_1,\ldots,\be_m)\vDash n$ and a strong composition $\al=(\al_1,\ldots,\al_k)\svDash n$, we still use the notation $\al^-=(\al_1,\ldots,\al_{k-1})$ for the composition of $n-\al_k$ that the last part of $\al$ is removed.

Recall that by definition, $\OP_{r;\be,\al}$ is the set of extended ordered multiset partition of the multiset $A(\be)\cup \{0^r\}$ and shape $\al$ such that $0$ is not contained in the last block, while $\OPa_{r;\be,\al}$ allows $0$ in the last block. Their generating functions tracking the statistic inv are $I_{r;\be,\al}(q)$ and $\Ia_{r;\be,\al}(q)$ respectively. Then we have the following theorem which is analogous to Lemma 3.2 in \cite{Rhoades}.
\begin{theorem}\label{theorem:Ialpha}
	The generating function $I_{r;\be,\al}(q)$ satisfies the following equation:
	\begin{equation}\label{eqI}
	I_{r;\be,\al}(q) = \sum_{\substack{S\subseteq[m],\ |S|=\al_k,\\\chi_S\leq \beta}}
	q^{\sum_{i=min(S)+1}^{m}(\be_i-\chi_S(i))} \Ia_{r;\be-\chi_S,\al^-}(q).
	\end{equation}
\end{theorem}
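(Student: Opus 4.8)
The plan is to peel off the last block of an extended ordered multiset partition and track how this affects $\inv$. Fix $\be$ with $\ell(\be)=m$ and $\al=(\al_1,\dots,\al_k)$. First I would set up the bijection: given $\pi=B_1/\cdots/B_k\in\OP_{r;\be,\al}$, the defining constraint of $\OP_{r;\be,\al}$ says $0\notin B_k$, so $B_k$ is an honest subset $S\subseteq[m]$ with $|S|=\al_k$; since each element of $S$ occurs in $A(\be)$ we have $\chi_S\le\be$. Deleting $B_k$ leaves $\pi':=B_1/\cdots/B_{k-1}$, an ordered multiset partition of $(A(\be)\cup\{0^r\})\setminus S=A(\be-\chi_S)\cup\{0^r\}$ of shape $\al^-$. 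The new last block $B_{k-1}$ of $\pi'$ \emph{may} now contain a $0$, so in general $\pi'\in\OPa_{r;\be-\chi_S,\al^-}$ rather than $\OP_{r;\be-\chi_S,\al^-}$ --- this is precisely why $\Ia$, not $I$, appears on the right side of (\ref{eqI}). Conversely, for any admissible $S$ and any $\pi'\in\OPa_{r;\be-\chi_S,\al^-}$, appending $S$ as a new rightmost block produces an element of $\OP_{r;\be,\al}$, since $0\notin S$ makes the constraint hold. These operations are mutually inverse, so $\pi\mapsto(S,\pi')$ is a bijection from $\OP_{r;\be,\al}$ onto the disjoint union, over all $S\subseteq[m]$ with $|S|=\al_k$ and $\chi_S\le\be$, of $\{S\}\times\OPa_{r;\be-\chi_S,\al^-}$.

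Next I would compute $\inv(\pi)-\inv(\pi')$ by classifying the inversion pairs $(a,b)$ of $\pi$ by the positions of $a$ and $b$. No inversion pair has $a\in B_k$, since $B_k$ is rightmost and nothing lies strictly to its right. If $b\notin B_k$, then $b$ lies in some $B_j$ with $j<k$, remains the minimum of that block in $\pi'$, and $a$ lies strictly to its left; these pairs are exactly the inversion pairs of $\pi'$. If $b\in B_k$, then $b=\min S$, and $a$ may be any element lying in a block strictly left of $B_k$ with $a>\min S$, i.e.\ any element of $A(\be-\chi_S)\cup\{0^r\}$ exceeding $\min S$; since $\min S\ge1$ the $0$'s never qualify, and for each value $i$ with $\min S<i\le m$ there are exactly $\be_i-\chi_S(i)$ qualifying copies. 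Hence
$$\inv(\pi)=\inv(\pi')+\sum_{i=\min S+1}^{m}\bigl(\be_i-\chi_S(i)\bigr).$$

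Finally, summing $q^{\inv(\pi)}$ over $\pi\in\OP_{r;\be,\al}$, applying the bijection, and factoring the $q$-power via the last display, the total contribution of each admissible $S$ is $q^{\sum_{i=\min S+1}^{m}(\be_i-\chi_S(i))}\,\Ia_{r;\be-\chi_S,\al^-}(q)$, which is exactly the $S$-summand of (\ref{eqI}); this proves the theorem. The step I expect to be the main obstacle is the $\inv$ bookkeeping: one must verify carefully that deleting the last block preserves every remaining block-minimum and creates no ``rightward'' inversions, and that the $0$'s exposed in $B_{k-1}$ are exactly what force $\Ia$ in place of $I$. Everything else is routine, and the argument is the extended analogue of Lemma 3.2 of \cite{Rhoades}.
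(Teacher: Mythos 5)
Your proposal is correct and follows the same approach as the paper's own proof: delete the last block $B_k=S$ (which cannot contain $0$), observe that the remainder lies in $\OPa_{r;\be-\chi_S,\al^-}$, and count the inversions contributed by $B_k$ as the elements exceeding $\min(S)$ to its left. Your write-up simply makes explicit the bookkeeping that the paper leaves to the reader.
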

\begin{proof}
	Consider an ordered multiset partition $\mu=B_1/\cdots/B_k\in\OP_{r;\be,\al}$. Writing $S=B_k$, we have that $B_1/\cdots/B_{k-1}\in\OPa_{r;\be-\chi_S,\al^-}$. Since each element in the ordered partition $B_1/\cdots/B_{k-1}$ that is bigger than $\min(S)$ creates an inversion with the last block, Equation (\ref{eqI}) follows immediately.
\end{proof}

Summing over all the strong compositions $\al$ of $n$ with $k$ parts, we have the following corollary.
\begin{corollary}\label{corollary:Irecursion}
	The generating function $I_{r;\be,k}(q)$ satisfies the following equation:
	\begin{equation}\label{eqI1}
	I_{r;\be,k}(q) = \sum_{\substack{S\subseteq[m],\ \chi_S\leq \beta}}
	q^{\sum_{i=min(S)+1}^{m}(\be_i-\chi_S(i))} \Ia_{r;\be-\chi_S,k-1}(q).
	\end{equation}
\end{corollary}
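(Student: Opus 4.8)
The statement to prove is Corollary \ref{corollary:Irecursion}, which follows from Theorem \ref{theorem:Ialpha} by summing over all strong compositions $\al$ of $n$ with $k$ parts.

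The plan is as follows. First, observe that the set $\OP_{r;\be,k}$ of extended ordered multiset partitions with exactly $k$ blocks decomposes as a disjoint union over the possible shapes: $\OP_{r;\be,k} = \bigsqcup_{\al \svDash n,\ \ell(\al)=k} \OP_{r;\be,\al}$, where $n = |\be|$. Since the $\inv$ statistic does not depend on how we slice by shape, this gives immediately
\begin{equation*}
I_{r;\be,k}(q) = \sum_{\substack{\al\svDash n\\ \ell(\al)=k}} I_{r;\be,\al}(q).
\end{equation*}

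Next, I would substitute the formula from Theorem \ref{theorem:Ialpha} for each $I_{r;\be,\al}(q)$ and interchange the order of summation. The key point is to understand, for a fixed subset $S\subseteq[m]$ with $\chi_S\le\be$, which pairs $(\al, \al^-)$ contribute. On the right-hand side of Equation (\ref{eqI}), the block $B_k$ has size $\al_k = |S|$, and the remaining data is an element of $\OPa_{r;\be-\chi_S,\al^-}$ where $\al^-$ is a strong composition of $n - |S|$ with $k-1$ parts. Conversely, any strong composition $\al^-$ of $n-|S|$ with $k-1$ parts, together with the choice $\al_k = |S|$, yields a valid shape $\al \svDash n$ with $k$ parts (note $\al_k = |S| \ge 1$ since $|S| = \al_k \ge 1$ because $\al$ is a strong composition; the only subtlety is that $S$ could be empty, but then $|S|=0$ is not allowed as a part of a strong composition, so such terms must be excluded — see the obstacle paragraph below). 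Therefore, after swapping sums,
\begin{equation*}
I_{r;\be,k}(q) = \sum_{\substack{S\subseteq[m],\ \chi_S\le\be}} q^{\sum_{i=\min(S)+1}^{m}(\be_i-\chi_S(i))} \sum_{\substack{\al^-\svDash n-|S|\\ \ell(\al^-)=k-1}} \Ia_{r;\be-\chi_S,\al^-}(q),
\end{equation*}
and the inner sum is exactly $\Ia_{r;\be-\chi_S,k-1}(q)$ by the same shape-decomposition applied to $\OPa$, yielding Equation (\ref{eqI1}).

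The main obstacle — really the only subtlety — is bookkeeping around the constraint $|S| \ge 1$ versus $|S| \ge 0$. In Equation (\ref{eqI}) the sum is over $S$ with $|S| = \al_k \ge 1$, so when we collect terms across all shapes $\al$, the subset $S$ always has positive size; the apparent sum ``over all $S$ with $\chi_S\le\be$'' in Equation (\ref{eqI1}) should be read with this understanding (empty $S$ contributes a shape with a zero-size last block, which is not a strong composition). I would either note this explicitly or observe that if $S=\emptyset$ then there is no valid $\al$, so that term simply does not arise. A second minor point worth checking is that the exponent $\sum_{i=\min(S)+1}^{m}(\be_i - \chi_S(i))$ is well-defined and unchanged under the reindexing — it depends only on $S$ and $\be$, not on $\al$, so it factors cleanly out of the inner sum. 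Once these points are dispatched, the corollary is a one-line consequence of Theorem \ref{theorem:Ialpha}.
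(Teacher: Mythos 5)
Your proposal is correct and follows exactly the paper's route: the paper obtains the corollary by summing Equation (\ref{eqI}) of \tref{Ialpha} over all strong compositions $\al$ of $n$ with $k$ parts, which is precisely your shape-decomposition and interchange of summation. Your extra remark about excluding $S=\emptyset$ is a reasonable point of care that the paper leaves implicit.
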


We shall prove a similar result about the statistic minimaj in the following subsection.

\subsection{The recursion for minimaj}

In our new notation, \coref{Dplus} shows that
\begin{equation}\label{IMeq}
\Ia_{r;\be,k}(q) = \Ma_{r;\be,k}(q).
\end{equation}
We shall prove in this subsection that 
\begin{theorem}\label{theorem:Mrecursion}
	The generating function $M_{r;\be,k}(q)$ satisfies the following equation:
	\begin{equation}\label{eqM}
	M_{r;\be,k}(q) = \sum_{\substack{S\subseteq[m],\ \chi_S\leq \beta}}
	q^{\sum_{i=min(S)+1}^{m}(\be_i-\chi_S(i))} \Ma_{r;\be-\chi_S,k-1}(q).
	\end{equation}
\end{theorem}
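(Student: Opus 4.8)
The plan is to mimic Rhoades's proof of Lemma~3.2 in \cite{Rhoades} — which yields the analogue of \eqref{eqI1} for $\minimaj$ without $0$'s — but carried out on the extended set $\OP_{r;\be,k}$, where the $\minimaj$ statistic uses $\miniword$. The crux is to find, for a given extended ordered multiset partition $\mu = B_1/\cdots/B_k \in \OP_{r;\be,k}$, the right notion of ``removing the last block'': unlike the $\inv$ recursion of \tref{Ialpha}, the block we peel off for $\minimaj$ is not $B_k$ itself, because $\miniword$ reorganizes blocks starting from the right using the first entry of the block to its right. So first I would recall the precise description of $\miniword$ and isolate which block of $\mu$ becomes the ``rightmost processed block'' whose removal does not disturb the reorganization of the others; following Rhoades, this should be a block determined by a cyclic-type argument, and removing it (together with recording its element set $S$ and the resulting power of $q$ coming from the descents it contributes to $\maj(\miniword(\mu))$) should land us in $\OPa_{r;\be-\chi_S, k-1}$ — the starred version, since after deletion a $0$ may legitimately sit in the new last block.

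Concretely, the key steps, in order, would be: (1) state a lemma describing $\miniword$ locally — how inserting a block at the far right changes $\maj(\miniword)$ — so that the contribution of the peeled block is exactly $\sum_{i=\min(S)+1}^m(\be_i-\chi_S(i))$, matching the exponent in \eqref{eqM}; (2) define the map $\mu \mapsto (\mu', S)$ that deletes the appropriate block, show it is a bijection $\OP_{r;\be,k} \to \bigsqcup_{S} \OPa_{r;\be-\chi_S,k-1}$ where $S$ ranges over subsets of $[m]$ with $\chi_S \le \be$, and verify it respects the $q$-weight, i.e. $\minimaj(\mu) = \minimaj(\mu') + \sum_{i=\min(S)+1}^m(\be_i - \chi_S(i))$; (3) sum over $S$ to obtain \eqref{eqM}. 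Step~(2) is where I would have to be careful that the codomain is $\Ma$ rather than $M$: because $0$ is forbidden only in the \emph{last} block of an element of $\OP_{r;\be,k}$, and that restriction is ``used up'' by the block we remove, the image genuinely includes partitions with $0$ in the last block, hence $\OPa_{r;\be-\chi_S,k-1}$ is correct. I would double-check the edge case where the removed block contains $0$, and the case $k=1$.

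The main obstacle I anticipate is step~(1)–(2): pinning down exactly which block to remove so that the $\miniword$ of the remaining partition is obtained from $\miniword(\mu)$ by a clean deletion, and tracking how the descent set of $\miniword$ — and hence $\maj$ of it — changes under that deletion. Rhoades's original argument handles this via a careful analysis of how $\miniword$ is built block-by-block from the right; the extension to allowing $0$'s is not conceptually new here (a $0$ behaves like any other small letter inside $\miniword$, since $0$ never appears in the first block and $\miniword$ only compares entries), so the bulk of Rhoades's argument should transfer verbatim once the bookkeeping for the extra $0^r$ is set up. I would therefore present the proof as: ``the argument of \cite[Lemma~3.2]{Rhoades} applies mutatis mutandis, with $\OPa$ in place of $\OP$ in the recursive term,'' and then spell out only the one new point — that the restriction ``$0$ not in the last block'' is exactly what distinguishes $\OP_{r;\be,k}$ from the $\OPa_{r;\be-\chi_S,k-1}$ appearing after one step of the recursion. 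Finally, combining \tref{Mrecursion} with \coref{Irecursion} and \eqref{IMeq} will give $I_{r;\be,k}(q) = M_{r;\be,k}(q)$ by induction on $k$, completing the chain of equalities of \tref{eq1} and \tref{combor}; I would note this consequence immediately after the proof.
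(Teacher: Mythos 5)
Your high-level plan --- peel off the last block, observe that the codomain must be the starred set $\OPa_{r;\be-\chi_S,k-1}$ because the ``no $0$ in the last block'' condition is consumed by the removed block, and then sum over $S$ --- does match the shape of the paper's argument, and you correctly locate the hard step in tracking how $\maj(\miniword(\cdot))$ changes under deletion. (One slip: the defining restriction is that $0$ never appears in the \emph{last} block of an element of $\OP_{r;\be,k}$, not the first.) However, the proposal defers exactly that hard step to ``Rhoades's argument applies mutatis mutandis,'' and that step is the entire content of the theorem. Your step (2), a single weight-preserving deletion bijection $\mu\mapsto(\mu',S)$ satisfying $\minimaj(\mu)=\minimaj(\mu')+\sum_{i>\min(S)}(\be_i-\chi_S(i))$, fails if implemented as a naive removal of $B_k$: since $\miniword$ arranges $B_{k-1}$ according to the first letter of the already-arranged $B_k$, deleting $B_k$ can reorder $B_{k-1}$ (and cascade leftward), moving descents in the interior of the word, so the change in $\maj$ is not localized to the last segment. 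This is precisely why the $\minimaj$ recursion cannot be proved by the same one-line block-removal argument as the $\inv$ recursion of \tref{Ialpha}.

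What the paper supplies to close this gap, and what your write-up would need, is the following machinery. First, reduce to shapes $\al$ with $\al_k=1$ via the observation that $\minimaj(B_1/\cdots/B_k)=\minimaj(B_1/\cdots/\min(B_k))$, prove a shape-refined recursion (\lref{lm36new}), and only then sum over shapes; the general $\al_k\geq 1$ case is recovered from the singleton case at the end. Second, for a singleton last block $\{m-i\}$, conjugate by the cyclic shift $c^{-i}$ on the alphabet $\Z_{m+1}$ so the last letter becomes the maximal letter $m$, whose block contributes nothing to $\minimaj$ and can then be deleted cleanly; the exponent $\be_{m-i+1}+\cdots+\be_m$ comes from \lref{33new} together with \lref{34new}, which states $\maj(c.w)=\maj(w)+r$ for words with $r$ zeros not ending in $0$. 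Third, undo the scrambling of the content caused by $c^{-i}$ using the extension of Rhoades's $t_i$-switch maps that permits permuting $r$ among the $\be_i$ (\coref{perms}). The second and third items are exactly where the $r$ zeros are \emph{not} transparent --- the group action changes from $\Z_m$ to $\Z_{m+1}$, the increment under $c$ involves $r$, and the content-permutation must treat $r$ as an extra part --- contrary to your remark that the extension to zeros ``is not conceptually new.'' Your concluding observation, that \tref{Mrecursion} plus \coref{Irecursion} and Equation (\ref{IMeq}) yield $I_{r;\be,k}(q)=M_{r;\be,k}(q)$ by induction, is correct and is how the paper derives \tref{eq2}.
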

Then as a consequence of \coref{Irecursion}, \tref{Mrecursion} and Equation (\ref{IMeq}), we have
\begin{theorem}\label{theorem:eq2}
	For any integers $n,r$ and composition $\be$,
	$$
	D_{r;\be,k}^{\inv}(q) = D_{r;\be,k}^{\minimaj}(q).
	$$
\end{theorem}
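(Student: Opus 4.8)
The plan is to derive \tref{eq2} by a term-by-term comparison of two recursions, so that no separate induction is needed. First I would record the recursion for the left-hand side supplied by \coref{Irecursion} (Equation (\ref{eqI1})),
\[
I_{r;\be,k}(q) = \sum_{\substack{S\subseteq[m],\ \chi_S\leq \beta}}
q^{\sum_{i=\min(S)+1}^{m}(\be_i-\chi_S(i))}\,\Ia_{r;\be-\chi_S,k-1}(q),
\]
and the recursion for the right-hand side supplied by \tref{Mrecursion} (Equation (\ref{eqM})),
\[
M_{r;\be,k}(q) = \sum_{\substack{S\subseteq[m],\ \chi_S\leq \beta}}
q^{\sum_{i=\min(S)+1}^{m}(\be_i-\chi_S(i))}\,\Ma_{r;\be-\chi_S,k-1}(q).
\]
The crucial point is that these two expansions are the same sum written twice: the index set of subsets $S\subseteq[m]$ with $\chi_S\leq\beta$ is identical, the $q$-weights $q^{\sum_{i=\min(S)+1}^{m}(\be_i-\chi_S(i))}$ are identical, and the only discrepancy is that the first sum carries the ``all''-variant generating functions $\Ia_{r;\be-\chi_S,k-1}(q)$ while the second carries $\Ma_{r;\be-\chi_S,k-1}(q)$.

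Next I would invoke Equation (\ref{IMeq}), that is $\Ia_{r;\be',k'}(q)=\Ma_{r;\be',k'}(q)$ for every weak composition $\be'$ and every $k'$. This is precisely the $(\inv,\minimaj)$ instance of \coref{Dplus}, which in turn follows from the Rhoades--Wilson equidistribution \tref{equi} together with the content-shift identity $D_{r;(\be_1,\ldots,\be_n),k}^{\stat+}(q)=D_{(r,\be_1,\ldots,\be_n),k}^{\stat}(q)$. Specializing $\be'$ to each composition $\be-\chi_S$ occurring in the two sums lets me replace $\Ma_{r;\be-\chi_S,k-1}$ by $\Ia_{r;\be-\chi_S,k-1}$ summand by summand; the two right-hand sides then coincide, and hence $I_{r;\be,k}(q)=M_{r;\be,k}(q)$, which is exactly the claimed identity $D_{r;\be,k}^{\inv}(q)=D_{r;\be,k}^{\minimaj}(q)$.

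Since \coref{Irecursion}, \tref{Mrecursion}, and Equation (\ref{IMeq}) are all in hand, there is essentially no obstacle in this final deduction; the only verifications are bookkeeping ones, namely that the summation ranges and exponents in Equations (\ref{eqI1}) and (\ref{eqM}) agree symbol for symbol, and that Equation (\ref{IMeq}) is available in enough generality (all compositions, all part-counts) to be applied to each shifted composition $\be-\chi_S$. The real content lies upstream in \tref{Mrecursion}: proving it calls for a ``remove the last block'' decomposition of $\OP_{r;\be,\al}$ that is compatible with the $\miniword$ construction, together with a check that deleting the last block $B_k$ lowers $\minimaj$ by exactly $\sum_{i>\min(B_k)}(\be_i-\chi_{B_k}(i))$, mirroring the $\inv$ computation behind \tref{Ialpha}. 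Once that is established, \tref{eq2} is immediate by the comparison above.
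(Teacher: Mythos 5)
Your proposal is correct and follows the paper's own route exactly: the paper deduces Theorem \ref{theorem:eq2} by combining Corollary \ref{corollary:Irecursion}, Theorem \ref{theorem:Mrecursion}, and Equation (\ref{IMeq}), i.e.\ the two recursions have identical index sets and $q$-weights and differ only in $\Ia$ versus $\Ma$, which Equation (\ref{IMeq}) identifies summand by summand. Your observation that the substantive work lies upstream in proving Theorem \ref{theorem:Mrecursion} also matches the paper, where that recursion is established separately via the minimaj lemmas before this final comparison.
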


In order to prove \tref{Mrecursion}, we need to state some combinatorial actions and properties about the statistic minimaj. We always use the setting that for any integers $n,r$, we consider ordered multiset partitions of the form $\mu=B_1/\cdots/B_k\in\OP_{r;\be,\al}$, where $\be=(\be_1,\ldots,\be_m)\vDash n$ is a weak composition and 
$\al=(\al_1,\ldots,\al_k)\svDash n$ is a strong composition. We let $\al^-=(\al_1,\ldots,\al_{k-1})$. 

A \emph{$k$-segmented word} is a pair $(w,\al)$ such that $w=w_1\cdots w_n$ is a length $n$ word and $\al$ is a strong composition of $n$. We write such $k$-segmented word in the form of a word $w$ with dots after $w_{\al_1},w_{\al_1+\al_2},\ldots,w_{\al_1+\cdots+\al_{k-1}}$. The components of the words separated by the dots are called \emph{segments}.
For example, the 3-segmented word $(3342412,(2,3,2))$ can be written as $33\cdot 424\cdot 12$.

For an ordered multiset partition $\mu=B_1/\cdots/B_k\in\OP_{r;\be,\al}$ where $B_i =\{j_1^{(i)}<\ldots<j_{\al_i}^{(i)}\}$, 
we let $w(\mu)=\mu[1]\cdot\mu[2]\cdot\ \cdots\ \cdot\mu[k]$ denote the \emph{k-segmented word} obtained in the following way: we let the last segment $\mu[k]$ be the increasing word $j_1^{(k)}\cdots j_{\al_k}^{(k)}$. For $1\leq i\leq k-1$, assume that the $i+1^{\textnormal{st}}$ segment $\mu[i+1]$ is defined and let $r$ be the first letter of $\mu[i+1]$. Let $j_1^{(i)},\ldots,j_m^{(i)}$ be the numbers that are less than or equal to $r$, and let $j_{m+1}^{(i)},\ldots,j_{\al_i}^{(i)}$ be the numbers that are greater than $r$, then we define $\mu[i]=j_{m+1}^{(i)} \cdots j_{\al_i}^{(i)} j_1^{(i)}\cdots j_m^{(i)}$. We also refer to $w(\mu)$ as the permutation component of the segmented word without causing ambiguity. Note that $w(\mu)$ as a permutation coincides with our definition of $\miniword(\mu)$.
Thus we have the following lemma:
\begin{lemma}
	Let $\mu$ be an ordered multiset partition, then $\minimaj(\mu) = \maj(w(\mu))$.
\end{lemma}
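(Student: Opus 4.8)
The plan is to prove the lemma by unwinding the two definitions and checking that the recursive construction of $w(\mu)$, read as an ordinary word (that is, with the dots erased), coincides letter for letter with the construction of $\miniword(\mu)$ recalled in Section~3.1; this makes precise the remark made just after the definition of $w(\mu)$. Granting it, the lemma is immediate: by definition $\minimaj(\mu)=\maj(\miniword(\mu))$, hence $\minimaj(\mu)=\maj(w(\mu))$, where $\maj(w(\mu))$ denotes the ordinary major index of the underlying permutation.

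Concretely, I would write $\mu=B_1/\cdots/B_k\in\OP_{r;\be,\al}$ with $B_i=\{j_1^{(i)}<\cdots<j_{\al_i}^{(i)}\}$, let $O_i$ denote the rearrangement of the entries of $B_i$ produced while forming $\miniword(\mu)=O_1\cdots O_k$, and prove $O_i=\mu[i]$ for all $i$ by downward induction on $i$. The base case is that the $\miniword$ rule lists $B_k$ in increasing order, which is exactly $j_1^{(k)}\cdots j_{\al_k}^{(k)}=\mu[k]$. For the inductive step, assuming $O_{i+1}=\mu[i+1]$, the ``first number of $B_{i+1}$'' appearing in the $\miniword$ rule is the leading letter of the already-organized block $O_{i+1}=\mu[i+1]$, and $\miniword$ then builds $O_i$ by listing the entries of $B_i$ strictly larger than that letter in increasing order, followed by the remaining entries in increasing order. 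This is exactly the recipe defining $\mu[i]$, so $O_i=\mu[i]$; concatenating over $i$ gives $\miniword(\mu)=w(\mu)$ as words, and the lemma follows.

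The step that will require the most care---though it is bookkeeping rather than a genuine difficulty---is justifying that ``the first number of $B_{i+1}$'' in the definition of $\miniword$ means the leading letter of $\mu[i+1]$ and not $\min(B_{i+1})$: since blocks are processed from right to left, $B_{i+1}$ has already been reorganized by the time $B_i$ is treated, and it is the first entry of that reorganized block that governs the arrangement of $B_i$. I would also sanity-check the equivalence on the running example $\mu=2/34/13/13/2$, where both constructions output $23413312$, matching the sample computation of $\miniword$ in Section~3.1.
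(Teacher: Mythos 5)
Your proposal is correct and matches the paper's justification, which consists precisely of the remark that $w(\mu)$, read as a permutation, coincides with $\miniword(\mu)$, so that the lemma follows from the definition $\minimaj(\mu)=\maj(\miniword(\mu))$. Your downward induction on blocks simply spells out that coincidence in detail, and your reading of ``the first number of $B_{i+1}$'' as the leading letter of the already-organized block is the intended one.
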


Rhoades in \cite{Rhoades} defined an action on ordered multiset partitions $\mu$ to interchange the number of $i$ and $i+1$ in $\mu$, called the \emph{$t_i$-switch map}. Let $s_i$ be the action on a sequence that interchange its \thn{i} and $i+1{\textnormal{st}}$ component, then Rhoades proved the following theorem:
\begin{theorem}[Rhoades]\label{theorem:tswitch}
	There exists a bijective map
	$$
	t_i: \quad  \OP_{\be,k} \rightarrow \OP_{s_i\cdot \beta, k}
	$$
	such that $\minimaj(t_i(\mu))=\minimaj(\mu)$.
\end{theorem}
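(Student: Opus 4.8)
The statement is the ordered-multiset-partition form of a theorem of Rhoades \cite{Rhoades}, so the plan is to recall his construction of the $t_i$-switch map and to indicate the three things it requires: that $t_i$ is well defined with image in $\OP_{s_i\cdot\be,k}$, that it is a bijection, and that it preserves $\minimaj$. The essential leverage is the lemma just stated, $\minimaj(\mu)=\maj(w(\mu))$, which turns every assertion about $\minimaj$ into one about $\maj$ of the $k$-segmented miniword.

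To define $t_i$ I would first isolate the relevant data. Since each block of $\mu=B_1/\cdots/B_k$ is a set, every block contains at most one element of $\{i,i+1\}$, so the blocks split into those containing neither, those containing $i$ only, those containing $i+1$ only, and those containing both. The blocks containing both, and those containing neither, are frozen. Among the remaining ``$i$-blocks'' and ``$(i+1)$-blocks'', read left to right and match them by the standard nested bracketing (say $(i+1)$-blocks open and $i$-blocks close); matched pairs are frozen as well. The $t_i$-switch then reconfigures only the \emph{unmatched} blocks, replacing the run they form by the run in which the roles of $i$ and $i+1$ have been interchanged, so that the total multiplicities of $i$ and $i+1$ are exchanged. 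The one point I would stress is that this reconfiguration is \emph{not} merely a relabeling of entries: because the recursive construction of the miniword compares a block's entries against the pivot (the first letter of the next segment), flipping an unmatched occurrence of $i$ or $i+1$ may force one element to migrate between two adjacent blocks. Hence $t_i$ changes the shape $\al$ while keeping the number of blocks $k$ fixed — which is why the theorem is phrased with $k$ rather than $\al$.

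With $t_i$ defined on all of $\OP_{\be,k}$, well-definedness and bijectivity are bookkeeping: one checks that the output is again an ordered multiset partition (entries stay distinct in each block, and the block structure stays consistent with the miniword recursion), that a counting argument using the frozen matched pairs gives content exactly $s_i\cdot\be$, and that the bracketing data is itself an invariant of the procedure, so that reapplying it is the identity. The substance is $\minimaj(t_i(\mu))=\minimaj(\mu)$, i.e.\ $\maj(w(t_i(\mu)))=\maj(w(\mu))$. Here I would build $w(t_i(\mu))$ from $w(\mu)$ segment by segment, using the elementary fact that flipping an occurrence of $i$ or $i+1$ situated between two letters \emph{not} in $\{i,i+1\}$ never changes $\maj$; so the only danger sits inside maximal runs of $i$'s and $(i+1)$'s, and the bracketing is engineered precisely so that the frozen matched pairs absorb the would-be changes while the unmatched run can be reconfigured at no cost, even after the induced block migrations propagate leftward through the miniword.

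I expect that last point — tracking how the block-level reconfiguration translates, through the recursive miniword construction, into a $\maj$-preserving rearrangement of the word, together with the cascade of pivot changes it can trigger — to be the main obstacle; the rest is routine case-checking. Since the resulting $t_i$ preserves $k$ and $\minimaj$, this is exactly the bijection $\OP_{\be,k}\to\OP_{s_i\cdot\be,k}$ asserted.
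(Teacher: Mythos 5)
You should first be aware that the paper does not prove this statement at all: Theorem \ref{theorem:tswitch} is imported verbatim from Rhoades \cite{Rhoades} as a black box (only its corollary for $\OPa_{r;\be,k}$ is drawn here), so there is no in-paper argument to compare against, and your write-up has to stand on its own as a reconstruction of the $t_i$-switch. As written it does not, because the entire content of the theorem is concentrated in the step you explicitly defer as ``the main obstacle'': that your block-level bracketing switch (i) is well defined, i.e.\ outputs a legitimate ordered multiset partition with exactly $k$ blocks, distinct entries in each block, and content $s_i\cdot\be$, and (ii) satisfies $\maj(w(t_i(\mu)))=\maj(w(\mu))$. The miniword is built from right to left, with each block's internal rotation determined by the first letter (pivot) of the segment to its right; exchanging an unmatched $i$/$i+1$ in one block can change that block's first letter, hence the pivot seen by the block to its left, hence that block's rotation, and so on leftward. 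Your sentence ``the bracketing is engineered precisely so that the frozen matched pairs absorb the would-be changes'' is an assertion, not an argument: no invariant is given that controls this cascade, the claimed element ``migrations'' between adjacent blocks are not shown to keep block entries distinct or the block count equal to $k$, and the claim that reapplying the map is the identity presupposes that the matching data is stable under the switch --- which is again exactly the kind of statement that needs proof. So the proposal is a plausible plan, but the theorem is not proved by it.

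A second caution: the letter-level analogue of your idea genuinely works for plain words (a content-switching reflection operator can be chosen to preserve the recording tableau under RSK, hence $\maj$, which is why the $\maj$ distribution on words is symmetric in content), but the whole difficulty here is that $\minimaj(\mu)=\maj(w(\mu))$ only for words arising as $\miniword$ of an ordered multiset partition with a prescribed number of blocks, and your block-level matching is an unverified guess at how to restrict such an operator to that set. It is not even tested on the delicate configurations, e.g.\ a block containing both $i$ and $i+1$ whose pivot equals $i$, where the two letters sit at opposite ends of their segment, or the case where the switch changes which element of a block is smallest and thereby changes the segment's leading letter. To have a complete proof you would need to carry out this verification in full (including the shape change you correctly anticipate, which is why the statement is phrased for $k$ rather than for a fixed $\al$), or else follow and cite Rhoades's own construction, which is what the present paper does.
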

Recall that we can add 1 to all the entries of an ordered multiset partition in $\OPa_{r;(\be_1,\ldots,\be_n),k}$ to get an ordered multiset partition in $\OP_{(r,\be_1,\ldots,\be_n),k}$, we can naturally generalize \tref{tswitch} to the set 
$\OPa_{r;\be,k}$ that allows us to rearrange the component of $\beta$ and the number $r$:
\begin{corollary}\label{corollary:perms}
	Let $(\ga_0,\ga_1,\ldots,\ga_m)$ be any rearrangement of the sequence $(r,\be_1,\ldots,\be_m)$, then there is a minimaj-preserving bijection $\psi$ between the sets 
	$\OPa_{\ga_0;(\ga_1,\ldots,\ga_m),k}$ and $\OPa_{r;(\be_1,\ldots,\be_m),k}$.
\end{corollary}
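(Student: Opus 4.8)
\textbf{Proof proposal for Corollary \ref{corollary:perms}.}

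The plan is to reduce the general rearrangement to a sequence of adjacent transpositions, and to handle each adjacent transposition by lifting the $t_i$-switch map of Rhoades (\tref{tswitch}) to the extended setting. First I would recall the identification used just before \coref{Dplus}: adding $1$ to every entry of an ordered multiset partition in $\OPa_{r;(\be_1,\ldots,\be_m),k}$ produces an ordered multiset partition in $\OP_{(r,\be_1,\ldots,\be_m),k}$, and this is a bijection that visibly preserves every one of the four statistics, in particular minimaj (since $\miniword$ and $\maj$ are unaffected by a global shift of the alphabet). So it suffices to produce, for each rearrangement $(\ga_0,\ga_1,\ldots,\ga_m)$ of $(r,\be_1,\ldots,\be_m)$, a minimaj-preserving bijection between $\OP_{(\ga_0,\ga_1,\ldots,\ga_m),k}$ and $\OP_{(r,\be_1,\ldots,\be_m),k}$. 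This is exactly the content of \tref{tswitch} applied repeatedly: the symmetric group $\Sn{m+1}$ acting on the $m+1$ slots of the composition $(r,\be_1,\ldots,\be_m)$ is generated by the adjacent transpositions $s_0,s_1,\ldots,s_{m-1}$, and \tref{tswitch} gives a minimaj-preserving bijection $t_i:\OP_{\gamma,k}\to\OP_{s_i\cdot\gamma,k}$ realizing each $s_i$. Composing the $t_i$'s along any word for the permutation taking $(r,\be_1,\ldots,\be_m)$ to $(\ga_0,\ga_1,\ldots,\ga_m)$ gives the desired bijection $\psi$ (after conjugating by the shift-by-$1$ identification on both ends), and it is minimaj-preserving because each factor is.

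The main subtlety — and the step I would be most careful about — is that \tref{tswitch} as quoted is stated for $\OP_{\be,k}$ with $\be$ a \emph{weak} composition over the positive alphabet, and we want to apply $t_i$ to the slot containing $r$ (i.e. the $s_0$ move), which after the shift-by-$1$ identification is a $t_0$-type switch interchanging the counts of the letters $1$ and $2$. Since $r$ can be $0$, the corresponding weak composition $(r,\be_1,\ldots,\be_m)$ may have a zero entry, so I should confirm that Rhoades' $t_i$-switch is defined and remains a minimaj-preserving bijection when $\be_i$ or $\be_{i+1}$ is $0$; this is immediate because in that degenerate case one of the two letters simply does not occur and $t_i$ acts as the obvious relabeling, which clearly fixes $\miniword$ up to the renaming. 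Equivalently, one can note that all compositions involved have the same content after rearrangement, and the $t_i$-switch only ever interchanges a pair of adjacent multiplicities; nothing in its construction requires positivity.

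Once that point is settled, the rest is bookkeeping: I would fix a reduced word $s_{i_1}\cdots s_{i_\ell}$ for the permutation $w\in\Sn{m+1}$ with $w\cdot(r,\be_1,\ldots,\be_m)=(\ga_0,\ga_1,\ldots,\ga_m)$, set $\psi$ to be the shift-down-by-$1$ map, followed by $t_{i_1}\circ\cdots\circ t_{i_\ell}$ on the positive alphabet, followed by the shift-up-by-$1$ map, and observe that each stage is a bijection preserving minimaj, hence so is the composite. I do not expect any real obstacle here; the only genuine input is \tref{tswitch}, and the work is confined to checking that the extension to weak compositions (allowing the entry $r$, possibly $0$) and the passage through the shift-by-$1$ dictionary are harmless, which they are.
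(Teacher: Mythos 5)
Your proposal is correct and follows essentially the same route as the paper: identify $\OPa_{r;\be,k}$ with $\OP_{(r,\be_1,\ldots,\be_m),k}$ via the shift-by-$1$ map and then apply Rhoades' $t_i$-switches (\tref{tswitch}) along a word for the rearranging permutation. Your explicit check that the $t_i$-switch behaves correctly on weak compositions with a zero entry is a worthwhile detail the paper leaves implicit.
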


It is obvious that for an ordered multiset partition,  the contribution of the last block to minimaj only depends on the minimum element of the last block. Thus we have the following lemma.
\begin{lemma}
	Let $B_1/\cdots/B_k$ be an ordered multiset partition. Then 
	\begin{equation}
	\minimaj(B_1/\cdots/B_k) = \minimaj(B_1/\cdots/\min(B_k)).
	\end{equation}
\end{lemma}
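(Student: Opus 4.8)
The plan is to work directly from the characterization $\minimaj(\mu) = \maj(w(\mu))$ just established, where $w(\mu) = \mu[1]\cdot\mu[2]\cdot\cdots\cdot\mu[k]$ is the $k$-segmented word. Write $\mu = B_1/\cdots/B_k$ and let $\mu' = B_1/\cdots/B_{k-1}/\{\min(B_k)\}$ be the ordered multiset partition obtained by collapsing the last block to its minimum (the case $k=1$ being trivial since both associated words are weakly increasing). I would show that $w(\mu)$ and $w(\mu')$ have the same major index by comparing them segment by segment.

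First I would record two structural facts read off from the definition of $w$. Fact (a): the last segment $\mu[k]$ is, by construction, the increasing word listing the elements of $B_k$, hence has no internal descents; likewise $\mu'[k] = \min(B_k)$ is a single letter, with no internal descents. Fact (b): for $1 \le i \le k-1$, the segment $\mu[i]$ is determined by $B_i$ together with the first letter of $\mu[i+1]$ alone. Running the recursive definition downward from $i = k-1$ to $i = 1$, this shows that $\mu[1],\ldots,\mu[k-1]$ depend only on $B_1,\ldots,B_{k-1}$ and on the first letter of $\mu[k]$. Since the first letter of $\mu[k]$ is $\min(B_k)$, which is also the (only) letter of $\mu'[k]$, the segments $\mu[i]$ and $\mu'[i]$ coincide as words for every $i \le k-1$.

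It then remains to compare the descent sets of $w(\mu)$ and $w(\mu')$. On the common prefix $\mu[1]\cdots\mu[k-1]$ they agree, and this prefix has the same total length $|B_1| + \cdots + |B_{k-1}|$ in both words. Inside the last segment there are no descents in either word by Fact (a), so the only remaining place a descent could occur is the junction between $\mu[k-1]$ and the last segment. In both words this junction sits at position $|B_1| + \cdots + |B_{k-1}|$, and the comparison made there is between the final letter of $\mu[k-1]$ and $\min(B_k)$ — the same comparison in both cases. Hence $w(\mu)$ and $w(\mu')$ have identical descent sets, so $\maj(w(\mu)) = \maj(w(\mu'))$, which is the assertion.

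I do not anticipate a real obstacle here; the only point requiring care is the position bookkeeping, namely verifying that collapsing $B_k$ does not shift the location of the junction descent, and this is immediate once one observes that the first $k-1$ segments are literally unchanged.
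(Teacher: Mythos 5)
Your proof is correct and is a careful formalization of exactly the observation the paper makes (the paper dismisses this lemma in one sentence, noting that the last block's contribution to minimaj depends only on its minimum). Your segment-by-segment bookkeeping via $w(\mu)$ — the first $k-1$ segments are unchanged because each depends only on the first letter of the next segment, which is $\min(B_k)$ for the last one — is the right justification and matches the paper's intent.
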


Rhoades in \cite{Rhoades} defined an action of the group $\Z_m = \langle c \rangle$ on $\OP_{\be,\al}$ by decrementing all the letters by 1 modulo $m$. Analogously, we define the group action of $\Z_{m+1} = \langle c \rangle$ on $\OPa_{r;\be,\al}$ by decrementing all the letters by 1 modulo $m+1$. 
Rhoades in \cite{Rhoades} proved that
\begin{lemma}[Lemma 3.4 in \cite{Rhoades}]\label{lemma:33rho}
	If the last component of $\al$ is 1, then $w(c.\mu) = c.w(\mu)$ for any $\mu\in\OP_{\be,\al}$.
\end{lemma}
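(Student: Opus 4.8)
The plan is to prove the sharper statement that $\mu$ and $c.\mu$ agree \emph{segment by segment} after applying $c$, by downward induction on the segment index. Write $w(\mu)=\mu[1]\cdot\mu[2]\cdots\mu[k]$ as in the construction above; recall that $\mu[k]$ is the increasing rearrangement of $B_k$, while for $i<k$ the segment $\mu[i]$ is produced from $B_i$ by the \emph{split rule}: list the elements of $B_i$ strictly larger than $r_{i+1}$ in increasing order, followed by the remaining elements in increasing order, where $r_{i+1}$ is the first letter of $\mu[i+1]$. Here $c$ acts on the alphabet $\{1,\dots,m\}$ by $x\mapsto x-1$ for $x\ge 2$ and $1\mapsto m$. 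I would prove, by induction on $i$ from $k$ down to $1$, that $(c.\mu)[i]=c.(\mu[i])$ as words ($c$ applied letterwise); since $c$ commutes with concatenation, this yields $w(c.\mu)=c.w(\mu)$.

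The base case $i=k$ is precisely where the hypothesis that the last part of $\al$ equals $1$ enters: $B_k=\{b\}$ is a singleton, so $\mu[k]=b$ and $(c.\mu)[k]=c(b)=c.(\mu[k])$, with nothing to check. (When $B_k$ has two or more elements and contains $1$ the claim can fail, since the last segment is written in ordinary increasing order, which $c$ does not preserve; this is exactly the obstruction the singleton hypothesis removes.)

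The inductive step rests on one observation: the split rule with pivot $r$ is nothing but sorting $B_i$ with respect to the cyclically rotated linear order $<_r$ on $\{1,\dots,m\}$ that lists $r+1,r+2,\dots,m,1,2,\dots,r$; concretely, $a<_r b$ iff $\rho_r(a)<\rho_r(b)$, where $\rho_r(a)$ is the representative of $a-r$ modulo $m$ lying in $\{1,\dots,m\}$. The crucial fact is that $c$ intertwines these orders: since $c(a)-c(r)\equiv a-r\pmod m$ we get $\rho_{c(r)}(c(a))=\rho_r(a)$, so $c$ is an order-isomorphism $(\{1,\dots,m\},<_r)\to(\{1,\dots,m\},<_{c(r)})$ and therefore carries the $<_r$-sorted list of $B_i$ to the $<_{c(r)}$-sorted list of $c.B_i$. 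Now invoke the inductive hypothesis: $(c.\mu)[i+1]=c.(\mu[i+1])$, so its first letter is $c(r_{i+1})$, hence the split rule applied to $c.\mu$ at index $i$ sorts $c.B_i$ with respect to $<_{c(r_{i+1})}$, which by the observation equals $c$ applied to the $<_{r_{i+1}}$-sorted list of $B_i$, i.e.\ to $\mu[i]$. Thus $(c.\mu)[i]=c.(\mu[i])$, closing the induction.

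The point requiring the most care is the ``wrap-around'' of $c$ at the letter $1$, which jumps to the top of the alphabet: one must check that the cyclic reformulation of the split rule is literally correct, including the degenerate pivot $r_{i+1}=1$ (where $<_1$ lists $2,3,\dots,m,1$ and the ``strictly larger'' block is all of $B_i\setminus\{1\}$), and that the first letter of the next segment of $c.\mu$ is exactly $c(r_{i+1})$, which is handed over by the inductive hypothesis. Once the cyclic-order lemma is isolated, the remaining induction is essentially a one-liner; the whole argument is in the spirit of Lemma~3.4 of \cite{Rhoades}.
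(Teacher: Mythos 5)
Your argument is correct and complete. Note that the paper itself does not prove this statement---it is quoted verbatim as Lemma~3.4 of Rhoades \cite{Rhoades}---so there is no in-paper proof to compare against; your downward induction on segments, with the key observation that the split rule with pivot $r$ is exactly sorting the block in the rotated linear order $r+1<_r\cdots<_r m<_r 1<_r\cdots<_r r$ and that $c$ is an order-isomorphism $(\{1,\dots,m\},<_r)\to(\{1,\dots,m\},<_{c(r)})$, is a valid self-contained proof. You also correctly isolate the only place the hypothesis $\al_k=1$ is used (the base case, where the last segment is sorted in the \emph{ordinary} increasing order, which $c$ does not preserve), which explains why the hypothesis cannot be dropped.
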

Recall that there is a bijective relation between $\OPa_{r;(\be_1,\ldots,\be_m),\al}$ and $\OP_{(r,\be_1,\ldots,\be_m),\al}$.
It follows from \lref{33rho} and our new group action of $\Z_{m+1}$ that
\begin{lemma}\label{lemma:33new}
	If the last component of $\al$ is 1, then $w(c.\mu) = c.w(\mu)$ for any $\mu\in\OPa_{r;\be,\al}$.
\end{lemma}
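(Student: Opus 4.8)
The plan is to deduce the statement from Rhoades' \lref{33rho} by transporting it along the bijection recalled above that adds $1$ to every entry. Write $\iota\colon\OPa_{r;(\be_1,\ldots,\be_m),\al}\to\OP_{(r,\be_1,\ldots,\be_m),\al}$ for that bijection; note that its target consists of ordinary (non-extended) ordered multiset partitions of shape $\al$ on the alphabet $\{1,\ldots,m+1\}$. I use the same symbol $\iota$ for the induced letter-wise map on $\al$-segmented words.

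First I would record two commutation relations. (i) $\iota$ intertwines the cyclic actions: $\iota(c.\mu)=c.\iota(\mu)$, where on the source $c$ generates the $\Z_{m+1}$-action on $\OPa_{r;\be,\al}$ by decrementing letters modulo $m+1$, and on the target $c$ generates Rhoades' $\Z_{m+1}$-action on $\OP_{(r,\be_1,\ldots,\be_m),\al}$ by the analogous rule on $\{1,\ldots,m+1\}$. Since $c$ and $\iota$ both act letter by letter, this reduces to a one-letter check: for $x\in\{0,\ldots,m\}$, ``add $1$, then decrement mod $m+1$ within $\{1,\ldots,m+1\}$'' and ``decrement mod $m+1$ within $\{0,\ldots,m\}$, then add $1$'' both send $0\mapsto m+1$ and $x\mapsto x$ for $x\ge 1$; the same check applies verbatim to the word version. (ii) The segmented-word map commutes with $\iota$: $w(\iota(\mu))=\iota(w(\mu))$. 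This holds because the recursion defining $w(\mu)$ reorders each block using only the internal order of its elements together with their comparisons against the first letter of the following segment, and adding $1$ to every entry leaves all of these comparisons unchanged (the pivot is merely shifted from $r$ to $r+1$); hence each segment of $w(\iota(\mu))$ equals the corresponding segment of $w(\mu)$ with $1$ added to every letter.

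Granting (i) and (ii), the lemma follows at once. Fix $\mu\in\OPa_{r;\be,\al}$ with the last component of $\al$ equal to $1$. Then $\iota(\mu)\in\OP_{(r,\be_1,\ldots,\be_m),\al}$, so \lref{33rho} gives $w(c.\iota(\mu))=c.w(\iota(\mu))$, and therefore
\begin{equation*}
\iota\bigl(w(c.\mu)\bigr)=w\bigl(\iota(c.\mu)\bigr)=w\bigl(c.\iota(\mu)\bigr)=c.w\bigl(\iota(\mu)\bigr)=c.\iota\bigl(w(\mu)\bigr)=\iota\bigl(c.w(\mu)\bigr).
\end{equation*}
Applying $\iota^{-1}$ yields $w(c.\mu)=c.w(\mu)$.

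The one step that will require genuine care is (ii): one must verify that Rhoades' construction of $w(\mu)$ depends on the letters only through order relations, so that it is equivariant under the global shift $\iota$. Once that is checked -- and it is apparent from the description of $w$ given in the text -- everything else is formal. Relation (i) is simply the assertion that the $\Z_{m+1}$-action defined for $\OPa_{r;\be,\al}$ matches, under the bijection $\iota$, Rhoades' action on $\OP_{(r,\be_1,\ldots,\be_m),\al}$, which is precisely why \lref{33rho} transfers.
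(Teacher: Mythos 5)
Your proposal is correct and follows the same route as the paper, which likewise deduces the lemma from Rhoades' result via the ``add $1$ to every letter'' bijection between $\OPa_{r;(\be_1,\ldots,\be_m),\al}$ and $\OP_{(r,\be_1,\ldots,\be_m),\al}$; the paper merely asserts this transfer in one sentence, whereas you supply the two commutation checks (equivariance of the $\Z_{m+1}$-actions under $\iota$ and order-preservation of the segmented-word construction) that make it rigorous.
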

Another property about the action $c$ is summarized in the following lemma:
\begin{lemma}\label{lemma:34new}
	For any word $w=w_1\cdots w_n$ with content $\{0^r, 1^{\be_1},\ldots,m^{\be_m}\}$ such that 
	$w_n\neq 0$, we have $\maj(c.w)=\maj(w)+r$.
\end{lemma}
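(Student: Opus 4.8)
The statement to prove is Lemma~\ref{lemma:34new}: for any word $w=w_1\cdots w_n$ with content $\{0^r,1^{\be_1},\ldots,m^{\be_m}\}$ and $w_n\neq 0$, we have $\maj(c.w)=\maj(w)+r$, where $c.w$ decrements every letter by $1$ modulo $m+1$. The plan is a direct analysis of how the descent set changes under $c$, tracking the major index contribution position by position.

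\textbf{First step: understand what $c$ does to descents.} The action $c$ sends each letter $a$ to $a-1$ if $a\geq 1$ and sends $0$ to $m$. So $c$ is order-preserving on the letters $\{1,\ldots,m\}$ (it just shifts them down by one, a bijection onto $\{0,\ldots,m-1\}$), but it moves each $0$ from being the \emph{smallest} letter to being the \emph{largest} letter. Consequently, for a consecutive pair $(w_i,w_{i+1})$ in which neither letter is $0$, the comparison $w_i>w_{i+1}$ is unchanged after applying $c$, so position $i$ is a descent of $c.w$ iff it is a descent of $w$. The only positions whose descent status can change are those involving a $0$. I will split these into: (a) pairs $(0, w_{i+1})$ with $w_{i+1}\neq 0$, i.e. a $0$ immediately followed by a nonzero letter; (b) pairs $(w_i, 0)$ with $w_i\neq 0$; and (c) pairs $(0,0)$. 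Case (c): $0>0$ is false and $m>m$ is false, so no change, and no contribution to $\maj$ either way.

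\textbf{Second step: analyze cases (a) and (b) and sum the changes.} In case (a), at position $i$ we have $w_i=0$, which is not a descent top (since $0 \le w_{i+1}$ always, and it is strictly less as $w_{i+1}\neq 0$, so $0 < w_{i+1}$, not a descent). After $c$, the letter at position $i$ becomes $m$, and $w_{i+1}$ becomes $w_{i+1}-1 \in \{0,\ldots,m-1\}$, so $m > w_{i+1}-1$, a descent. Thus each such position $i$ \emph{gains} a descent, contributing $+(i+1)$ to $\maj$. Wait — $\maj$ is $\sum_{w_i > w_{i+1}} i$ by the definition in the excerpt (for permutations; here it is the same convention $\maj(w)=\sum_{w_i>w_{i+1}} i$), so a new descent at position $i$ contributes $+i$. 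In case (b), at position $i$ we have $w_i\neq 0$, $w_{i+1}=0$, so $w_i > 0$, a descent, contributing $i$ to $\maj(w)$. After $c$, position $i$ has $w_i - 1 \in\{0,\ldots,m-1\}$ and position $i+1$ has $m$, so $w_i-1 < m$: no longer a descent, losing $i$. The key combinatorial observation is now: \emph{the positions of the $0$'s in $w$ can be grouped into maximal runs of consecutive $0$'s}. For each maximal run of $0$'s occupying positions $j, j+1,\ldots,j+\ell-1$ (so $\ell\ge 1$ zeros), since $w_n\neq 0$ this run is not at the very end, so there is a letter $w_{j+\ell}\neq 0$ immediately after it; also the letter $w_{j-1}$ before it (if $j>1$) is nonzero by maximality. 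Inside the run, case (c) applies with no change. At the right end, position $j+\ell-1$ is a case-(a) position: $\maj$ gains $j+\ell-1$. At the left end, if $j>1$, position $j-1$ is a case-(b) position: $\maj$ loses $j-1$. So the net change from this run is $(j+\ell-1)-(j-1)=\ell$ when $j>1$, and is just $(j+\ell-1)$ when $j=1$; but when $j=1$ the run starts at position $1$ so $j-1=0$ and "losing $0$" is vacuous, so again the net is $(1+\ell-1)-0 = \ell$. Hence each maximal run of length $\ell$ contributes net $+\ell$ to $\maj$, and summing over all runs gives $+(\text{total number of }0\text{'s}) = +r$.

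\textbf{Third step: assemble.} Combining: $\maj(c.w) - \maj(w) = \sum_{\text{runs of }0} \ell = r$, which is exactly the claim. The \emph{main obstacle} — really the only subtle point — is the bookkeeping at the boundary of a $0$-run when the run starts at position $1$ versus in the interior, and making sure no position is double-counted; this is handled cleanly by the run decomposition above, noting that $w_n\neq 0$ guarantees every $0$-run has a nonzero letter to its right (so case (a) always applies at a run's right end) and that the letters immediately bordering a run are nonzero (so the case split by letter-value is exhaustive and disjoint). One should also double-check the $\maj$ convention used here matches $\sum_{w_i > w_{i+1}} i$ as in Section~3.1; with that convention the computation goes through verbatim as above. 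I would write the proof as: (1) state that positions not adjacent to a $0$ are unaffected; (2) decompose the $0$'s into maximal runs; (3) compute the net $\maj$-change of each run as $+\ell$; (4) sum to get $+r$.
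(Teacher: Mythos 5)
Your proof is correct and takes essentially the same approach as the paper, whose entire proof is the one-sentence observation that $c$ moves each descent occurring just before a maximal run of $0$'s to the position at the end of that run (a shift of $\ell$ for a run of length $\ell$). Your version simply makes this precise via the three-case analysis and correctly handles the boundary case of a run starting at position $1$, where there is no descent to move but a new one of value $\ell$ is created.
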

\begin{proof}
	The map $c$ moves every descent occurring before a maximal contiguous run of $0$'s in $w$ to the position at the end of this run.
\end{proof}
Now we can prove the following lemma.
\begin{lemma}\label{lemma:lm36new}
	Given integers $n,r$. Let $\al=(\al_1,\ldots,\al_k)\svDash n$ be a strong composition with $\al_k=1$ and let $\be=(\be_1,\ldots,\be_m)\vDash n$ be a weak composition. We have
	\begin{equation}\label{l36eq}
	M_{r;\be,\al}(q) = \sum_{\be_i>0}q^{\be_{i+1}+\cdots+\be_m}
	\Ma_{r;(\be_{i+1},\ldots,\be_m,\be_1,\ldots,\be_{i}-1),\al^-}(q).
	\end{equation}
\end{lemma}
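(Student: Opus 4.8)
The plan is to follow Rhoades's method, disposing of the last block with the cyclic action $c$ just as the $\inv$-recursion (\tref{Ialpha}) disposes of it with an inversion count. Since $\al_k=1$, every $\mu=B_1/\cdots/B_k\in\OP_{r;\be,\al}$ has last block a singleton $B_k=\{i\}$ with $1\le i\le m$ and $\be_i>0$, and erasing $B_k$ is a bijection from the set of $\mu$ with $B_k=\{i\}$ onto $\OPa_{r;(\be_1,\dots,\be_i-1,\dots,\be_m),\al^-}$. Hence $M_{r;\be,\al}(q)=\sum_{\be_i>0}\sum_{\nu}q^{\minimaj(\nu/\{i\})}$, with the inner sum over $\nu\in\OPa_{r;(\be_1,\dots,\be_i-1,\dots,\be_m),\al^-}$, and it suffices to show, for each $i$ with $\be_i>0$, that
\[
\sum_{\nu}q^{\minimaj(\nu/\{i\})}\;=\;q^{\be_{i+1}+\cdots+\be_m}\,\Ma_{r;(\be_{i+1},\dots,\be_m,\be_1,\dots,\be_i-1),\al^-}(q),
\]
and then sum over $i$. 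Because $r$ together with $(\be_{i+1},\dots,\be_m,\be_1,\dots,\be_i-1)$ is a rearrangement of $r$ together with $(\be_1,\dots,\be_i-1,\dots,\be_m)$, \coref{perms} rewrites the right-hand side as $q^{\be_{i+1}+\cdots+\be_m}\sum_{\nu}q^{\minimaj(\nu)}$; so what is really needed is a bijective, minimaj-shifting identification between the ordered multiset partitions $\nu/\{i\}$ and a relabeled copy of the $\nu$.

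I would produce this identification using the inverse cyclic action $c^{-1}=c^{m}$. Fix $\nu$, put $\mu=\nu/\{i\}$, and apply $c^{-1}$ to $\mu$ a total of $m-i$ times. The shape $\al$, and in particular the condition $\al_k=1$, is preserved, so by \lref{33new} (iterated, also for $c^{-1}$) one has $w(c^{-j}\mu)=c^{-j}w(\mu)$ for all $j$; since the last segment of $\mu$ is the single letter $i$, the word $w(c^{-j}\mu)$ ends in the letter $i+j$, which is $\ne m$ for $0\le j\le m-i-1$. Inverting \lref{34new} shows that $\maj(c^{-1}w)=\maj(w)-(\#\,\text{of }m\text{'s in }w)$ whenever $w$ does not end in $m$; applying this at each of the $m-i$ steps, the $(j+1)$st lowers $\maj$ by $\be_{m-j}$, so altogether $\minimaj(c^{-(m-i)}\mu)=\minimaj(\mu)-(\be_{i+1}+\cdots+\be_m)$. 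On the other hand $c^{-(m-i)}\mu=(c^{-(m-i)}\nu)/\{m\}$, and appending the maximal letter $m$ as a new final singleton changes neither the reorganization of the earlier blocks in the miniword construction (no entry exceeds $m$, so the last block of $c^{-(m-i)}\nu$ is still read in increasing order with the same first letter) nor the descent set of the resulting word (a rise up to $m$ is never a descent), whence $\minimaj\big((c^{-(m-i)}\nu)/\{m\}\big)=\minimaj(c^{-(m-i)}\nu)$. Combining the two gives the pointwise identity $\minimaj(\nu/\{i\})=\minimaj(c^{-(m-i)}\nu)+(\be_{i+1}+\cdots+\be_m)$.

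To conclude, $\nu\mapsto c^{-(m-i)}\nu$ is a bijection from $\OPa_{r;(\be_1,\dots,\be_i-1,\dots,\be_m),\al^-}$ onto an $\OPa$-set of the same shape $\al^-$ whose content has the same multiset of multiplicities, so by \coref{perms} that set has $\minimaj$-generating function $\Ma_{r;(\be_{i+1},\dots,\be_m,\be_1,\dots,\be_i-1),\al^-}(q)$. Summing the pointwise identity over $\nu$ gives the displayed per-$i$ equality, and summing over $i$ with $\be_i>0$ yields the lemma.

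The technical heart is the middle step. One must state and verify the inverse of \lref{34new} carefully and keep exact track of which $\be_j$ is subtracted at each rotation, while checking that every intermediate miniword really does avoid ending in $m$ (so that the inverted lemma applies) and that \lref{33new} genuinely iterates under $c^{-1}$. The boundary case $i=m$, where $m-i=0$, no rotation takes place, and the identity degenerates to ``appending the maximal letter is free,'' should be recorded separately, and one should confirm that the empty-sum and empty-composition conventions behave as expected.
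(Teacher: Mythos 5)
Your proposal is correct and follows essentially the same route as the paper: strip the singleton last block, use the cyclic action $c$ together with \lref{33new} and \lref{34new} to show that rotating the last letter up to the maximal letter $m$ shifts $\minimaj$ by exactly $\be_{i+1}+\cdots+\be_m$, and then invoke \coref{perms} to identify the resulting generating function with $\Ma_{r;(\be_{i+1},\ldots,\be_m,\be_1,\ldots,\be_i-1),\al^-}(q)$. The only difference is presentational — you apply $c^{-1}$ step by step with the inverted form of \lref{34new}, while the paper conjugates by $c^i$ in one stroke — but the computation and the bookkeeping of which $\be_j$ is gained at each step are identical.
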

\begin{proof}
	We shall prove the recursion above about the generating function $M_{r;\be,\al}(q)$ where $\al_k=1$. Without loss of generality, we assume that $\be$ is a strong composition. Consider an ordered multiset partition $\mu\in\OP_{r;\be,\al}$. If the last block of $\mu$ is a singleton $\{m\}$, then clearly it does not contribute anything to $\minimaj(\mu)$. Writing $\mu=\mu'/m$, then $\minimaj(\mu)=\minimaj(\mu')$.
	
	Next consider the case when $\mu=\mu'/m-i$ end with $m-i$ for some $i\in\{1,\ldots,m-1\}$, then $\mu'\in\OPa_{r;(\be_1,\ldots,\be_{m-i}-1,\ldots,\be_m),\al^-}$. 
	It follows that we have the following consequence of \lref{33new} and \lref{34new}:
	\begin{eqnarray*}
		\minimaj(\mu'/m-i)& =& \minimaj(c^i.(c^{-i}.\mu'|m))\\
		&=& \minimaj(c^{-i}.\mu'|m)+ \be_{m-i+1}+\cdots +\be_m
	\end{eqnarray*}
	where $c^{-i}.\mu'\in\OPa_{\be_{m-i+1},(\be_{m-i+2},\ldots,\be_{m},r,\be_{1},\ldots,\be_{m-i}-1),\al^-}$, and we have
	\begin{equation}\label{eqntemp}
	M_{r;\be,\al}(q) = \sum_{\be_{m-i}>0}q^{\be_{{m-i}+1}+\cdots+\be_m}
	\Ma_{\be_{{m-i}+1};(\be_{{m-i}+2},\ldots,\be_m,r,\be_1,\ldots,\be_{{m-i}}-1),\al^-}(q).
	\end{equation}
	Equation (\ref{l36eq}) follows immediately from Equation (\ref{eqntemp}) and \coref{perms} since we can permute $r$ and the components of $\beta$.
\end{proof}

Now we are ready to prove \tref{Mrecursion}. \\
\textbf{Proof of \tref{Mrecursion}.}
Let $\mu=B_1/\cdots/B_k\in\OP_{r;\be,\al}$.
For the case when $\al=(\al_1,\ldots,\al_{k-1},1)$, we have the following recursion as a consequence of \lref{lm36new}:
\begin{eqnarray}
\sum_{\al^-} M_{r;\be,\al}(q) &=& \sum_{\al^-} \sum_{\be_i>0}q^{\be_{i+1}+\cdots+\be_m}
\Ma_{r;(\be_{i+1},\ldots,\be_m,\be_1,\ldots,\be_{i}-1),\al^-}(q)\nonumber\\
&=& \sum_{\be_i>0}\sum_{\al^-} q^{\be_{i+1}+\cdots+\be_m}
\Ma_{r;(\be_{i+1},\ldots,\be_m,\be_1,\ldots,\be_{i}-1),\al^-}(q)\nonumber\\
&=& \sum_{\be_i>0} q^{\be_{i+1}+\cdots+\be_m}
\Ma_{r;(\be_{i+1},\ldots,\be_m,\be_1,\ldots,\be_{i}-1),k-1}(q)\nonumber\\
&=& \sum_{\be_i>0} q^{\be_{i+1}+\cdots+\be_m}
\Ma_{r;(\be_1,\ldots,\be_{i}-1,\ldots,\be_m),k-1}(q).\label{resultM}
\end{eqnarray}
The first line is Equation (\ref{resultM}) summed over all compositions $\al^-\svDash (n-1)$ with $k-1$ parts; the second line interchanges the order of the two summations; the third line evaluates the inner sum over all possible $\al^-$'s; the last line is an application of \coref{perms}.

More generally, if the last block is of size $\al_k\geq 1$, then the following equation follows as a consequence of Equation (\ref{resultM}):
\begin{equation}
M_{r;\be,k}(q) = \sum_{\substack{B_k\subseteq[m],\ \chi_{B_k}\leq \beta}}
q^{\sum_{i=min(B_k)+1}^{m}(\be_i-\chi_{B_k}(i))} \Ma_{r;\be-\chi_{B_k},k-1}(q),
\end{equation}
which proves \tref{Mrecursion}. \hfill\qed

\section{Conclusion and future directions}
In this section, we give a brief summary of results in our paper and discuss directions for future work.

\subsection{The shared distribution}

In Sections 4 and 5, we proved the equi-distributivity of statistics inv, maj, dinv and minimaj on the set of extended ordered multiset partitions.

\begin{corollary}\label{corollary:newDun}
	For any integers $n,r$ and composition $\be$,
	$$
	D_{r;\be,k}^{\inv}(q) = D_{r;\be,k}^{\maj}(q) = 
	D_{r;\be,k}^{\dinv}(q) = 
	D_{r;\be,k}^{\minimaj}(q).
	$$
\end{corollary}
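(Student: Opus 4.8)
The statement is a direct consequence of the two equi-distribution theorems proved in Sections 4 and 5, so the plan is simply to concatenate them. \tref{eq1} already supplies the first three equalities, $D_{r;\be,k}^{\inv}(q)=D_{r;\be,k}^{\maj}(q)=D_{r;\be,k}^{\dinv}(q)$. These come from the three insertion maps $\phi^{\inv}_{r;\be,k,\ell}$, $\phi^{\maj}_{r;\be,k,\ell}$ and $\phi^{\dinv}_{r;\be,k,\ell}$, which share a common domain, are bijections onto $\OP_{r;\be,k}$, and by Equations (\ref{inv}), (\ref{maj}) and (\ref{dinv}) each increases its statistic by the same quantity $\sum_{u\in U}u+\sum_{b\in B}b$; summing over $\ell$ (and over the size of $\be$) therefore produces one common recursion, forcing the three generating functions to agree.

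The remaining equality $D_{r;\be,k}^{\inv}(q)=D_{r;\be,k}^{\minimaj}(q)$ is \tref{eq2}. Its proof compares the inv-recursion of \coref{Irecursion} with the minimaj-recursion of \tref{Mrecursion}: the two recursions have literally the same shape, and they agree on their ``$+$''-counterparts by Equation (\ref{IMeq}), which is itself a consequence of \coref{Dplus}. An induction on $k$ then yields $I_{r;\be,k}(q)=M_{r;\be,k}(q)$, and combining with \tref{eq1} gives \coref{newDun}.

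So the proof I would write is essentially one line. There is no new obstacle in assembling the corollary — all the genuine work was already done in Section 5, where establishing the minimaj recursion required the $\Z_{m+1}$-action $c$ of decrementing letters modulo $m+1$, together with \lref{33new} and \lref{34new} to control how $\maj$ of the underlying word changes. The only points worth a quick sanity check are that the hypotheses of the two theorems match (both hold for arbitrary $n$, $r$ and weak composition $\be$) and that the inductions behind them share compatible base cases, e.g.\ $k\le 1$, where all four generating functions either reduce to a single monomial or collapse directly to the identity $\Ia_{r;\be,k}(q)=\Ma_{r;\be,k}(q)$.
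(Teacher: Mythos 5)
Your proposal is correct and matches the paper's own treatment exactly: \coref{newDun} is obtained by simply concatenating \tref{eq1} (the inv--maj--dinv equalities from the three insertion maps) with \tref{eq2} (the inv--minimaj equality from comparing the recursions of \coref{Irecursion} and \tref{Mrecursion} via Equation (\ref{IMeq})). No further argument is needed, and your sanity checks on matching hypotheses are consistent with how the paper assembles the result.
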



Given the work of D'Adderio, Iraci and Wyngaerd \cite{Michele}, we have the following,

\begin{theorem}[D'Adderio, Iraci and Wyngaerd]\label{theorem:michele}
	For any integers $n,k\geq 0$, we have the equality
	\begin{equation}
	\Rise_{r;n,k}[X;q,0]=
	\Rise_{r;n,k}[X;0,q]=
	\Delta'_{e_k}\Delta_{h_r}e_n|_{t=0} = 
	\Delta'_{e_k}\Delta_{h_r}e_n|_{q=0, t=q}. 
	\end{equation}
\end{theorem}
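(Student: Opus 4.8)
The plan is to establish the four displayed expressions as equal by a short chain of three identities whose substantial inputs are all imported: the combinatorial equi-distribution results of Sections~4--5, the $t=0$ theorem of D'Adderio, Iraci and Wyngaerd, and the $q,t$-symmetry of $\Delta'_{e_k}\Delta_{h_r}e_n$. Explicitly, I would show
\[
\Rise_{r;n,k}[X;q,0]=\Rise_{r;n,k}[X;0,q]=\Delta'_{e_k}\Delta_{h_r}e_n\big|_{t=0}=\Delta'_{e_k}\Delta_{h_r}e_n\big|_{q=0,\,t=q}.
\]

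For the first equality I would compare monomial coefficients. By \tref{combor}, for every weak composition $\beta$ we have $\Rise_{r;n,k}[X;q,0]\big|_{M_\beta}=D_{r;\beta,k+1}^{\dinv}(q)$ and $\Rise_{r;n,k}[X;0,q]\big|_{M_\beta}=D_{r;\beta,k+1}^{\maj}(q)$, and by \tref{eq1} (contained in \coref{newDun}) these two polynomials in $q$ coincide. Since the coefficients of the $M_\beta$ determine a (quasi)symmetric function, this forces $\Rise_{r;n,k}[X;q,0]=\Rise_{r;n,k}[X;0,q]$. The second equality is exactly the $t=0$ specialization of \cref{deltar1}, i.e.\ the theorem of D'Adderio, Iraci and Wyngaerd \cite{Michele}, which I would cite directly.

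For the third equality, set $F(q,t)=\Delta'_{e_k}\Delta_{h_r}e_n$; it suffices to prove $F(q,t)=F(t,q)$, since then $F(q,0)=F(0,q)$, which is the claim after the relabeling $[X;0,q]=[X;q,t]\big|_{q=0,\,t=q}$. Writing $e_n=\sum_{\mu\vdash n}c_\mu(q,t)\,\MH_\mu$ and using $\MH_\mu[X;q,t]=\MH_{\mu'}[X;t,q]$, $B_\mu(q,t)=B_{\mu'}(t,q)$ (with $\mu'$ the conjugate of $\mu$) together with the $q,t$-independence of $e_n$, one gets $c_\mu(q,t)=c_{\mu'}(t,q)$; then applying $q\leftrightarrow t$ to $F(q,t)=\sum_\mu c_\mu(q,t)\,e_k[B_\mu-1]\,h_r[B_\mu]\,\MH_\mu$ and reindexing $\mu\mapsto\mu'$ recovers $F(q,t)$. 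This is the standard $q,t$-symmetry argument, and one may simply cite it. Combining the three equalities gives the theorem.

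The genuinely hard content is all external to this assembly: \tref{combor} together with \coref{newDun} (Sections~3--5) and the $t=0$ Extended Delta result of \cite{Michele}. What requires attention here is bookkeeping --- matching the index shift between $\Rise_{r;n,k}$ and $D_{r;\beta,k+1}^{\dinv}$, confirming that agreement of all $M_\beta$-coefficients upgrades to an identity of (quasi)symmetric functions, and tracking the variable relabeling so that the symmetry step lands on $\Delta'_{e_k}\Delta_{h_r}e_n\big|_{q=0,\,t=q}$ rather than on its $q\leftrightarrow t$ mirror.
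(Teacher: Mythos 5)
Your assembly is correct, but it follows a genuinely different route from the paper: the paper does not prove \tref{michele} at all --- it imports the entire four-term identity wholesale from \cite{Michele} as an attributed external result, and only afterwards combines it with its own equi-distribution theorems to obtain the corollary that follows. You instead prove the first equality internally, comparing monomial coefficients via \tref{combor} together with $D_{r;\beta,k+1}^{\dinv}(q)=D_{r;\beta,k+1}^{\maj}(q)$ from \tref{eq1}, cite \cite{Michele} only for the single link $\Rise_{r;n,k}[X;q,0]=\Delta'_{e_k}\Delta_{h_r}e_n|_{t=0}$, and supply the standard $q,t$-symmetry argument (using $\MH_{\mu'}[X;q,t]=\MH_{\mu}[X;t,q]$ and $B_{\mu'}(q,t)=B_{\mu}(t,q)$) for the last equality. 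This is mathematically sound and buys a more self-contained derivation that isolates exactly which piece genuinely requires the external $t=0$ theorem; the trade-off is attributional, since as stated the whole chain is credited to D'Adderio--Iraci--Wyngaerd. One small imprecision to fix: the $t=0$ specialization of \cref{deltar1} directly yields $\Rise_{r;n,k}[X;q,0]=\Delta'_{e_k}\Delta_{h_r}e_n|_{t=0}$, not the second displayed equality $\Rise_{r;n,k}[X;0,q]=\Delta'_{e_k}\Delta_{h_r}e_n|_{t=0}$; the latter follows only after the first equality is in place, so the ordering of your steps (which you do respect) is essential rather than cosmetic.
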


These results can be combined as follows.

\begin{corollary}
	For any integers $n,k\geq 0$, we have the equality
	\begin{multline}
	\Rise_{r;n,k}[X;q,0]=
	\Rise_{r;n,k}[X;0,q]=
	\Val_{r;n,k}[X;q,0]=
	\Val_{r;n,k}[X;0,q]
	\\=
	\Delta'_{e_k}\Delta_{h_r}e_n|_{t=0} = 
	\Delta'_{e_k}\Delta_{h_r}e_n|_{q=0, t=q}. 
	\end{multline}
\end{corollary}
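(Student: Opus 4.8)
The plan is to assemble this corollary from three results already established: \tref{combor}, \coref{newDun}, and \tref{michele}. The six quantities in the statement are symmetric functions, homogeneous of degree $n$, and such a function is determined by the coefficients $|_{M_\be}$ of the monomials $x^\be$ over all weak compositions $\be\vDash n$; so it suffices to match these coefficients one composition at a time.

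First I would apply \tref{combor}: for every $\be\vDash n$ it gives
$$
\Rise_{r;n,k}[X;q,0]|_{M_\be}=D_{r;\be,k+1}^{\dinv}(q),\qquad
\Rise_{r;n,k}[X;0,q]|_{M_\be}=D_{r;\be,k+1}^{\maj}(q),
$$
$$
\Val_{r;n,k}[X;q,0]|_{M_\be}=D_{r;\be,k+1}^{\inv}(q),\qquad
\Val_{r;n,k}[X;0,q]|_{M_\be}=D_{r;\be,k+1}^{\minimaj}(q).
$$
Then \coref{newDun} (which combines \tref{eq1} and \tref{eq2}) says these four polynomials agree for every $\be$. Letting $\be$ range over all weak compositions of $n$, this forces
$$
\Rise_{r;n,k}[X;q,0]=\Rise_{r;n,k}[X;0,q]=\Val_{r;n,k}[X;q,0]=\Val_{r;n,k}[X;0,q].
$$

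Next I would invoke \tref{michele}, which supplies the identifications $\Rise_{r;n,k}[X;q,0]=\Delta'_{e_k}\Delta_{h_r}e_n|_{t=0}$ and $\Rise_{r;n,k}[X;0,q]=\Delta'_{e_k}\Delta_{h_r}e_n|_{q=0,\,t=q}$. Chaining these with the preceding display yields the full multi-equality; in particular $\Val_{r;n,k}[X;q,0]=\Delta'_{e_k}\Delta_{h_r}e_n|_{t=0}$, which is exactly the valley version Extended Delta Conjecture at $t=0$, the promised endpoint of the paper.

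There is no genuine obstacle here: all the substance lives in the three cited results, and what remains is organizational. The only point demanding care is the bookkeeping in the first step — pairing each one-parameter specialization with the correct statistic (dinv with the rise version at $t=0$, maj with the rise version at $q=0$, inv with the valley version at $t=0$, minimaj with the valley version at $q=0$) exactly as recorded in \tref{combor}, and keeping the index shift $k\mapsto k+1$ consistent throughout so that \coref{newDun} is invoked on $D_{r;\be,k+1}^{\stat}(q)$ rather than $D_{r;\be,k}^{\stat}(q)$.
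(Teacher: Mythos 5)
Your proposal is correct and follows exactly the paper's route: the paper obtains this corollary by combining \tref{combor} with the equi-distribution result \coref{newDun} to equate the four combinatorial specializations coefficient-by-coefficient on monomials, and then appends the identifications from \tref{michele}. Your attention to the $k\mapsto k+1$ index shift and to matching each specialization with the correct statistic is exactly the bookkeeping the paper relies on.
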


Define the \emph{Mahonian distribution} on $\OP_{r;\be,k}$ to be the polynomial
$$
D_{r;\be,k}(q) := 
D_{r;\be,k}^{\inv}(q) = 
D_{r;\be,k}^{\maj}(q) = 
D_{r;\be,k}^{\dinv}(q) = 
D_{r;\be,k}^{\minimaj}(q)
$$
and let 
$
D^+_{r;\be,k}(q) := 
D_{r;\be,k}^{\inv+}(q) = 
D_{r;\be,k}^{\maj+}(q) = 
D_{r;\be,k}^{\dinv+}(q) = 
D_{r;\be,k}^{\minimaj+}(q)
$, 
then $D_{r;\be,k}(q)$ generalizes the \emph{Mahonian distribution on ordered multiset partitions} $D_{\be,k}(q)$ in \cite{wilson} that
\begin{equation*}
D_{\be,k}(q)=D_{0;\be,k}(q).
\end{equation*}
By either of the Equations (\ref{inv}), (\ref{maj}) and (\ref{dinv}), we have the base case that $D_{0;\emptyset,0}(q)=1$, $D_{r;\emptyset,k}(q)=0$ for $r+k>0$ and the recursion:
\begin{eqnarray*}
	D_{r;\be,k}(q) & =& \sum_{\ell = 0}^{k} q^{\binom{\be_n-k+\ell}{2}}
	{\ell \brack \be_n-k+\ell}_q {k \brack \ell}_q D_{r;\be^-,\ell}(q)\\
	&&   \qquad + q^{\binom{\be_n-k+\ell}{2}}
	{\ell \brack \be_n-k+\ell}_q {k-1 \brack \ell}_q \left(D^+_{r;\be^-,\ell}(q)-D_{r;\be^-,\ell}(q)\right)\\
	& =& \sum_{\ell = 0}^{k} q^{\binom{\be_n-k+\ell}{2}}
	{\ell \brack \be_n-k+\ell}_q 
	\left(
	{k-1 \brack \ell-1}_q D_{r;\be^-,\ell}(q)
	+
	{k-1 \brack \ell}_q D^+_{r;\be^-,\ell}(q)
	\right).
\end{eqnarray*}
Note that $D_{r;\be,k}(q)$ is a generalization of the \emph{$q$-Stirling number} $S_{n,k}(q)$ defined by
\begin{equation}
S_{n,k}(q)=S_{n-1,k-1}(q) + [k]_q S_{n-1,k}(q)
\end{equation}
as a consequence of the following equation due to the work of the second author \cite{wilson}:
\begin{equation}
D_{0;1^n,k}(q) = S_{n,k}(q).
\end{equation}

%

\subsection{Schur positivity}

By fixing the positions of the zero valleys in a particular Dyck path, one obtains an LLT polynomial \cite{LLT}. As a result, the combinatorial side of the Extended Delta Conjecture must be Schur positive, although there is no known Schur expansion of these polynomials. The original Delta Conjecture has two explicit (and not obviously equivalent) Schur expansions at $q$ or $t = 0$ via analysis of the major index statistic \cite{wilson} and the minimaj statistic \cite{crystal}. The latter work actually gives two proofs of Schur positivity at $q$ or $t = 0$, one using the theory of crystals and one using a bijection with skew Schur functions. The skew Schur function bijection is refined enough to carry over to the Extended Delta Conjecture case.

Given an extended ordered set partition $\pi \in \OP_{r;n,k}$, recall from Subsection 3.1 that $\miniword(\pi)$ is the word obtained by rearranging the parts of $\pi$ to minimize the major index of the resulting word. Given a nonnegative integer $\ell$, sets $D \in [n+r-1]$ and $I \in [k-1]$ of size $\ell$, and words $z \in \mathbb{N}^{\ell+1}$, $w \in \{0,1\}^{\ell+1}$, we let $M(D, I, z, w)$ be the set of $\pi \in \OP_{n,k}$ such that 
\begin{itemize}
\item $\miniword(\pi)$ has descents exactly at positions in $D$, 
\item the $j$th entry of $I$ gives the block containing the $j$th descent in $\miniword(\pi)$,
\item the $j$th weakly increasing run in $\miniword(\pi)$ begins with $z_j$ zeros, and 
\item $z_j - w_j$ of the $z_j$ zeros that begin the $j$th weakly increasing run in $\miniword(\pi)$ occur at the beginning of a block.
\end{itemize}
The map from Proposition 3.1 in \cite{crystal} gives a bijection from $M(D, I, z, w)$ to a certain set of skew Schur functions, all but one of which are vertical strips, where the zeros in $\pi$ get mapped to \emph{fixed} positions outside the skew shapes. We depict an example of this map in Figure \ref{fig:skew}, and refer the reader to \cite{crystal} for a detailed description.

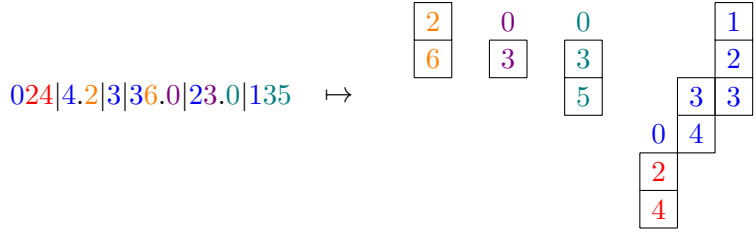
\begin{figure}
\begin{center}
\begin{tikzpicture}[scale=0.5]
\node at (0,-2.5) {${\color{blue} 0} {\color{red}24} |{\color{blue}4 }.{\color{orange} 2}|{\color{blue}3}| {\color{blue}3} {\color{orange}6}.{\color{violet} 0}|{\color{blue} 2} {\color{violet}3}. {\color{teal}0}| {\color{blue}1}{\color{teal}35}$};
\node at (5,-2.5) {$\mapsto$};
\draw (7,0) grid (8,-2);
\draw (9,-1) grid (10,-2);
\draw (11,-1) grid (12,-3);
\draw (13,-4) grid (14,-6);
\draw (14,-2) grid (15,-4);
\draw (15,0) grid (16,-3);

\node[color=orange] at (7.5,-0.5) {2};
\node[color=orange] at (7.5,-1.5) {6};
\node[color=violet] at (9.5,-0.5) {0};
\node[color=violet] at (9.5,-1.5) {3};
\node[color=teal] at (11.5,-0.5) {0};
\node[color=teal] at (11.5,-1.5) {3};
\node[color=teal] at (11.5,-2.5) {5};
\node[color=blue] at (13.5,-3.5) {0};
\node[color=red] at (13.5,-4.5) {2};
\node[color=red] at (13.5,-5.5) {4};
\node[color=blue] at (14.5,-2.5) {3};
\node[color=blue] at (14.5,-3.5) {4};
\node[color=blue] at (15.5,-0.5) {1};
\node[color=blue] at (15.5,-1.5) {2};
\node[color=blue] at (15.5,-2.5) {3};

\end{tikzpicture}
\end{center}
\caption{An example of the bijection from \cite{crystal} applied to extended ordered set partitions. Here $\pi \in \OP_{3;12,6}$ is written in minimaj order and its descents are denoted by periods. We have $D = \{4,8,11\}$, $I=\{2,4,5\}$, $z = \{1,0,1,1\}$, and $w = \{0, 0, 1, 1\}$. The leading entries in blocks and the entries in the first weakly increasing run in $\miniword(\pi)$ get sent to the ribbon shape, while other entries get sent to vertical strips.}
\label{fig:skew}
\end{figure}

Since $\minimaj$ is constant for all $\pi \in M(D, I, z, w)$ and all $\pi \in \OP_{r;n,k}$ with a fixed minimaj can be decomposed into sets of type $M(D, I, z, w)$, this proves that the distribution of minimaj over $\OP_{r;n,k}$ is a sum of products of skew Schur functions and is therefore Schur positive. By our equi-distribution results, the other three statistics also have Schur positive distributions over $\OP_{r;n,k}$. 

\begin{problem}
Provide an RSK proof \cite{wilson} or a crystal theoretic proof \cite{crystal} that the distribution of our statistics over $\OP_{n,k}$ is Schur positive.
\end{problem} 

\subsection{The Extended Delta Conjecture}

Though a number of cases of the Extended Delta Conjecture of $\Delta'_{e_k} \Delta_{h_r}e_n$ have been proved, the Extended Delta Conjecture in the general case is still open.
The main goal of this study is:
\begin{problem}
	Prove the Extended Delta Conjecture in general.
\end{problem}
\noindent This includes the original Delta Conjecture.

\tref{equi} and \coref{newDun} show that
the two versions of the Delta Conjecture and the Extended Delta Conjecture are equivalent at the case when $q$ or $t$ is 0. However, there is no proof that the combinatorial side of the two versions are equivalent in general.
\begin{problem}
	Prove that 
	\begin{equation}
	\Rise_{r;n,k}[X;q,t] = \Rise_{r;n,k}[X;t,q] = \Val_{r;n,k}[X;q,t].
	\end{equation}
\end{problem}
\noindent This includes the problem that $\Rise_{n,k}[X;q,t] = \Rise_{n,k}[X;t,q] = \Val_{n,k}[X;q,t]$.

\subsection{Other potential conjectures}

Finally, the Delta operator satisfies $\Delta_{h_r e_k}=\Delta_{s_{r,1^k}}+\Delta_{s_{r+1,1^{k-1}}}$ and $\Delta_{h_r e_k}=\Delta'_{e_k}\Delta_{h_r}+\Delta'_{e_{k-1}}\Delta_{h_r}$, so 
the Extended Delta Conjecture can be amended to involve sums of consecutive hook-shaped Schur functions in the subscript. It would be nice to have a conjecture for when just a single hook-shaped Schur function appears. 

\begin{problem}
	Give a combinatorial conjecture for the expression $\Delta_{s_\lambda} e_n$, where $\la\vdash n$ is of hook shape.
\end{problem}

\bigskip\bigskip\bigskip

\appendix

\section{Four bijections between ordered multiset partitions and parking functions}

In this appendix, we present four bijections, $\ga^{\dinv},\ga^{\maj},\ga^{\inv},\ga^{\minimaj}$, of Haglund, Remmel and the second author \cite{HRW} when they were proving  the following equations appear in \tref{combo}:
\begin{eqnarray}
\Rise_{n,k}[X;q,0]|_{M_\beta} &=& D_{\beta,k+1}^{\dinv}(q),\\
\Rise_{n,k}[X;0,q]|_{M_\beta} &=& D_{\beta,k+1}^{\maj}(q),\\
\Val_{n,k}[X;q,0]|_{M_\beta} &=& D_{\beta,k+1}^{\inv}(q),\\
\Val_{n,k}[X;0,q]|_{M_\beta} &=& D_{\beta,k+1}^{\minimaj}(q).
\end{eqnarray}
We shall omit the proof of bijectivity which can be found in \cite{HRW}.

\subsection{The bijection $\ga^{\dinv}$ of $\Rise_{n,k}[X;q,0]|_{M_\beta} = D_{\beta,k+1}^{\dinv}(q)$}

Recall that 
$$
D_{\beta,k+1}^{\dinv}(q) = \sum_{\pi\in\OP_{\beta,k+1}}q^{\dinv(\pi)},
\mbox{ and }$$
$$
\Rise_{n,k}[X;q,0]|_{M_\beta} = \sum_{
	\substack{(\PF,R)\in\WPFc_{n,n-k-1}^{\Rise},\ 
		X^{\PF} = \prod_{i=1}^{\ell(\be)}x_i^{\be_i},\ \area^-(\PF,R)=0
}}q^{\dinv(\PF)}.
$$
The map 
$$
\ga^\dinv:\OP_{\beta,k+1} \rightarrow \{(\PF,R)\in\WPFc_{n,n-k-1}^{\Rise},\ 
X^{\PF} = \prod_{i=1}^{\ell(\be)}x_i^{\be_i},\ \area^-(\PF,R)=0\}
$$
that satisfies $\dinv(\ga^\dinv(\pi))=\dinv(\pi)$ is defined as follows.

Given $\pi=\pi_1/\cdots/\pi_{k+1}\in\OP_{\beta,k+1}$ where $|\pi_i|=\al_i$, we construct a Dyck path \\
$N^{\al_{k+1}}E^{\al_{k+1}}N^{\al_{k}}E^{\al_{k}} \cdots N^{\al_{1}}E^{\al_{1}}$ which is of size $n$. Then, the rise-decorated parking function $\ga^\dinv(\pi)$ is obtained by labeling the north steps $N^{\al_{i}}$ with entries in the block $\pi_i$, and mark all the $n-k-1$ double rises. Clearly, the resulting parking function has $\area^-$ 0, and the map $\ga^\dinv$ is invertible.

For example, for an ordered multiset partition $\pi = 24/13/235$ with $\dinv(\pi)=8$, its image under the map $\ga^\dinv$ is given in \fref{eg1} which also has dinv 8.
\begin{figure}[ht]
	\centering	
	\begin{tikzpicture}[scale=0.5]
	\fillshade{1/1,2/2,3/3,4/4,5/5,6/6,7/7}
	\Dpath{0,0}{7}{7}{0,0,0,3,3,5,5,-1};
	\PFtext{0,0}{0/2,0/3,0/5,3/1,3/3,5/2,5/4};
	\fillsome{0/2/\ast,0/3/\ast,3/5/\ast,5/7/\ast}
	\end{tikzpicture}
	\caption{The image $\ga^\dinv(\pi)$ for $\pi = 24/13/235$.}
	\label{fig:eg1}
\end{figure}
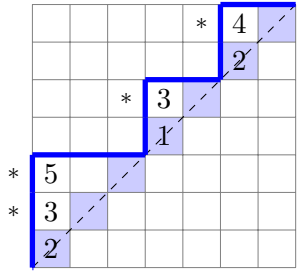

\subsection{The bijection $\ga^{\maj}$ of $\Rise_{n,k}[X;0,q]|_{M_\beta} = D_{\beta,k+1}^{\maj}(q)$}

In this section, we construct the map
$$
\ga^\maj:\OP_{\beta,k+1} \rightarrow \{(\PF,R)\in\WPFc_{n,n-k-1}^{\Rise},\ 
X^{\PF} = \prod_{i=1}^{\ell(\be)}x_i^{\be_i},\ \dinv(\PF)=0\}
$$
that satisfies $\area^-(\ga^\maj(\pi))=\maj(\pi)$.

Given $\pi=\pi_1/\cdots/\pi_{k+1}\in\OP_{\beta,k+1}$ where $|\pi_i|=\al_i$, 
we shall write the descent-starred permutation notation of $\pi$ (introduced in Section 4.2):
we first write $\pi$ as a permutation $\sg(\pi)=\sg_1\cdots\sg_n$ of the multiset $A(\be)=\{i^{\be_i}:1\leq i\leq \ell(\be)\}$ by organizing the elements in each block $\pi_i$ in decreasing order. We mark a star $\ast$ at the descent positions that are entirely contained in some block of $\pi$.
Now we are ready to construct the rise-decorated parking function $\ga^\maj(\pi)$.

We read $\sg(\pi)$ from right to left. We start with drawing a north step and labeling it with $\sg_n$ when reading the rightmost number $\sg_n$ (notice that $\sg_n$ cannot have a star mark). Inductively, suppose that the next number we read is $\sg_i$. If $\sg_i\leq\sg_{i+1}$, we add 2 steps $EN$ at the end of the previous path, and label the new north step with $\sg_i$. Otherwise when  $\sg_i>\sg_{i+1}$, we add another north step and label it with $\sg_i$ (this must be a double rise). We decorate the new north step with a star if $\sg_i$ has a star $\ast$. Then we proceed to the next number $\sg_{i-1}$.

In this way, we construct a parking function with no dinv. For example, for an ordered multiset partition $\pi = 24/13/35/2$ with $\maj(\pi)=6$, we have $\sg(\pi)=4_\ast 2 3_\ast 1 5_\ast 3 2$, and its image under the map $\ga^\maj$ is given in \fref{eg2} which has $\area^-$ 6.
\begin{figure}[ht]
	\centering	
	\begin{tikzpicture}[scale=0.5]
	\fillshade{1/1,2/2,3/3,4/4,5/5,6/6,7/7}
	\Dpath{0,0}{7}{7}{0,0,0,1,1,2,2,-1};
	\PFtext{0,0}{0/2,0/3,0/5,1/1,1/3,2/2,2/4};
	\fillsome{0/3/\ast,1/5/\ast,2/7/\ast}
	\end{tikzpicture}
	\caption{The image $\ga^\maj(\pi)$ for $\pi = 24/13/35/2$.}
	\label{fig:eg2}
\end{figure}
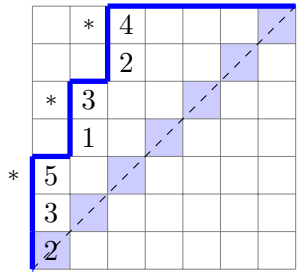

\subsection{The bijection $\ga^{\inv}$ of $\Val_{n,k}[X;q,0]|_{M_\beta} = D_{\beta,k+1}^{\inv}(q)$}

In this section, we construct the map
$$
\ga^\inv:\OP_{\beta,k+1} \rightarrow \{(\PF,V)\in\WPFc_{n,n-k-1}^{\Val},\ 
X^{\PF} = \prod_{i=1}^{\ell(\be)}x_i^{\be_i},\ \area(\PF)=0\}
$$
that satisfies $\dinv^-(\ga^\inv(\pi))=\inv(\pi)$.

Given $\pi=\pi_1/\cdots/\pi_{k+1}\in\OP_{\beta,k+1}$ where $|\pi_i|=\al_i$, we construct a diagonal $(n,n)$-Dyck path $(NE)^n$. Then we proceed from the lowest to the highest north step and from the last to the first block of $\pi$. We label the first $\al_{k+1}$ north steps increasingly with numbers in $\pi_{k+1}$, and add stars to the north steps from the second row to the $\al_{k+1}$th row. Suppose that we have completed the procedure for block $\pi_{i+1}$. For block $\pi_i$, we label the next $\al_i$ north steps increasingly with numbers in $\pi_{i}$ while adding stars to all except the first step in the $\al_i$ steps. Then we proceed to the next block $\pi_{i-1}$.

In this way, we construct a valley-decorated parking function with no area. For example, for an ordered multiset partition $\pi = 24/13/235$ with $\inv(\pi)=4$, its image under the map $\ga^\inv$ is given in \fref{eg3} which has $\dinv^-$ 4.

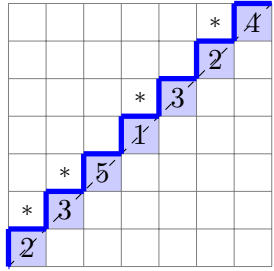
\begin{figure}[ht]
	\centering	
	\begin{tikzpicture}[scale=0.5]
	\fillshade{1/1,2/2,3/3,4/4,5/5,6/6,7/7}
	\Dpath{0,0}{7}{7}{0,1,2,3,4,5,6,-1};
	\PFtext{0,0}{0/2,1/3,2/5,3/1,4/3,5/2,6/4};
	\fillsome{1/2/\ast,2/3/\ast,4/5/\ast,6/7/\ast}
	\end{tikzpicture}
	\caption{The image $\ga^\inv(\pi)$ for $\pi = 24/13/235$.}
	\label{fig:eg3}
\end{figure}

\subsection{The bijection $\ga^{\minimaj}$ of $\Val_{n,k}[X;0,q]|_{M_\beta} = D_{\beta,k+1}^{\minimaj}(q)$}

In this section, we construct the map
$$
\ga^\minimaj:\OP_{\beta,k+1} \rightarrow \{(\PF,V)\in\WPFc_{n,n-k-1}^{\Val},\ 
X^{\PF} = \prod_{i=1}^{\ell(\be)}x_i^{\be_i},\ \dinv^-(\PF,V)=0\}
$$
that satisfies $\area(\ga^\minimaj(\pi))=\minimaj(\pi)$. $\ga^{\minimaj}$ is most technical among the four maps.

Given $\pi=\pi_1/\cdots/\pi_{k+1}\in\OP_{\beta,k+1}$ where $|\pi_i|=\al_i$, we construct $\tau=\miniword(\pi)$ as in the definition of minimaj. We define the runs of $\tau$ as its maximal, contiguous, weakly increasing subsequences. Suppose that $\tau$ has $s$ runs, then we label the runs with $0,\ldots,s-1$ from right to left. We shall construct the parking function $\ga^{\minimaj}(\pi)$ inductively by reading from the 0th to the $(s-1)$st run of $\tau$, such that the row has number in the $i$th run has area $i$ (this is sufficient for showing $\area(\ga^\minimaj(\pi))=\minimaj(\pi)$).

Suppose that $\tau_a,\tau_{a+1},\ldots,\tau_n$ is the $0$th run, and the numbers from $\tau_b$ to $\tau_n$ are contained in blocks $\pi_p,\ldots,\pi_{k+1}$ that only consist of numbers in the $0$th run (for some $b\geq a$). Suppose that the numbers $\tau_c,\ldots,\tau_{b-1}$ form the first block from right to left containing elements in run 1. 
Starting from the empty path, we first construct steps $(NE)^{n-b+1}$, filling the north steps with entries in $\pi_{k+1},\ldots,\pi_p$ increasingly for each block from bottom to top.  
We add star marks on the north steps whose labels are in the same block as the labels in the rows immediately below. 
Then we find the biggest number $\tau_d$ among $\tau_b$ to $\tau_n$ that is smaller than $\tau_c$ (which must exist by definition of miniword). We insert steps $(NE)^{a-c}$ above the north step of $\tau_d$, label the steps with  $\tau_c,\ldots,\tau_{a-1}$ from bottom to top, and add stars to the rows of $\tau_{c+1},\ldots,\tau_{a-1}$. Then we insert steps $(NE)^{b-a}$ after the east step after $(NE)^{a-c}$ that we just inserted, and label the steps with  $\tau_a,\ldots,\tau_{b-1}$ from bottom to top, adding stars to all these rows. We let $A$ denote the north step of $\tau_a$.

For greater value $i\in\{1,\ldots,s-1\}$, we suppose that the procedures for runs $0,\ldots,i-1$ have been completed and we proceed the algorithm inductively as follows. Suppose that $\tau_a,\tau_{a+1},\ldots,\tau_{n'}$ is the $i$th run that has not been read, and the numbers from $\tau_b$ to $\tau_{n'}$ are contained in blocks $\pi_p,\ldots,\pi_{k'}$ that only consist of numbers in the $i$th run (for some $b\geq a$). Suppose that the numbers $\tau_c,\ldots,\tau_{b-1}$ form the first block from right to left containing elements in run $i+1$. 

Starting from the top of $A$ in the previous procedure, we first insert steps $(NE)^{n'-b+1}$, filling the north steps with entries in $\pi_{k'},\ldots,\pi_{p}$ increasingly for each block from bottom to top. We add star marks on the north steps whose labels are in the same block as the labels in the rows immediately below.

Then we find the biggest number $\tau_d$ among $\tau_b$ to $\tau_{n'}$ that is smaller than $\tau_c$. We insert steps $(NE)^{a-c}$ above the north step of $\tau_d$, label the steps with  $\tau_c,\ldots,\tau_{a-1}$ from bottom to top, and add stars to the rows of $\tau_{c+1},\ldots,\tau_{a-1}$. Then we insert steps $(NE)^{b-a}$ after the east step after $(NE)^{a-c}$ that we just inserted, and label the steps with  $\tau_a,\ldots,\tau_{b-1}$ from bottom to top, adding stars to all these rows. We renew $A$ to be the north step of the new $\tau_a$ in this procedure. 

For example, for $\pi=13/23/14/234$, its miniword is $\tau=312341234$ which has 3 runs: $1234$, $1234$, $3$ from right to left.
The procedure of computing $\ga^\minimaj(\pi)$ is given in \fref{eg4}.

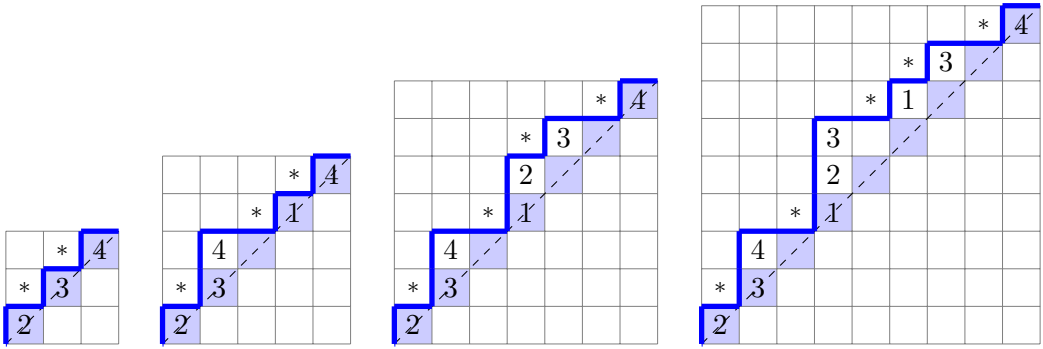
\begin{figure}[ht]
	\centering	
	\begin{tikzpicture}[scale=0.5]
	\fillshade{1/1,2/2,3/3}	
	\Dpath{0,0}{3}{3}{0,1,2,-1};
	\PFtext{0,0}{0/2,1/3,2/4};
	\fillsome{1/2/\ast,2/3/\ast}
	\end{tikzpicture}
	\quad
	\begin{tikzpicture}[scale=0.5]
	\fillshade{1/1,2/2,3/3,4/4,5/5}	
	\Dpath{0,0}{5}{5}{0,1,1,3,4,-1};
	\PFtext{0,0}{0/2,1/3,1/4,3/1,4/4};
	\fillsome{1/2/\ast,3/4/\ast,4/5/\ast}
	\end{tikzpicture}
	\quad
	\begin{tikzpicture}[scale=0.5]
	\fillshade{1/1,2/2,3/3,4/4,5/5,6/6,7/7}	
	\Dpath{0,0}{7}{7}{0,1,1,3,3,4,6,-1};
	\PFtext{0,0}{0/2,1/3,1/4,3/1,3/2,4/3,6/4};
	\fillsome{1/2/\ast,3/4/\ast,4/6/\ast,6/7/\ast}
	\end{tikzpicture}
	\quad
	\begin{tikzpicture}[scale=0.5]
	\fillshade{1/1,2/2,3/3,4/4,5/5,6/6,7/7,8/8,9/9}	
	\Dpath{0,0}{9}{9}{0,1,1,3,3,3,5,6,8,-1};
	\PFtext{0,0}{0/2,1/3,1/4,3/1,3/2,3/3,5/1,6/3,8/4};
	\fillsome{1/2/\ast,3/4/\ast,5/7/\ast,6/8/\ast,8/9/\ast}
	\end{tikzpicture}
	\caption{The procedure of computing $\ga^\minimaj(\pi)$ for $\pi = 13/23/14/234$.}
	\label{fig:eg4}
\end{figure}

\subsection{Summary}
We have presented four bijective maps of the form $\ga^{\stat}$ for $\stat=\dinv,\maj,\inv$ and $\minimaj$. In \cite{HRW},
Haglund, Remmel and the second author proved that the maps $\ga^{\stat}$ are bijective, and they map the statistic stat into some parking function statistic (stated in each section of this appendix).

Further, by checking the four bijections, we notice that each bijection $\ga^{\stat}$ maps the minimum element in the last part of $\pi$ into the car in the first row of $\ga^{\stat}(\pi)$. We use this fact to prove \tref{combor}.

\bibliographystyle{alpha}
\bibliography{val}

\end{document}